\numberwithin{equation}{section}
\newcommand{\diam}{\operatorname{diam}}
\newtheorem{thm}{Theorem}[section]
\newtheorem{lma}[thm]{Lemma}
\newtheorem{cor}[thm]{Corollary}
\newtheorem{defn}[thm]{Definition}
\newtheorem{prop}[thm]{Proposition}
\newtheorem{ques}[thm]{Question}
\renewcommand{\geq}{\geqslant}
\renewcommand{\leq}{\leqslant}
\renewcommand{\H}{\text{H}}
\title{ \vspace{-20mm}The Assouad dimensions of projections of planar sets}
\author{Jonathan M. Fraser and Tuomas Orponen}
\begin{document}

\date{}

\maketitle

\begin{abstract}
We consider the Assouad dimensions of orthogonal projections of planar sets onto lines. Our investigation covers both general and self-similar sets.

For general sets, the main result is the following: if a set in the plane has Assouad dimension $s \in [0,2]$, then the projections have Assouad dimension at least $\min\{1,s\}$ almost surely. Compared to the famous analogue for Hausdorff dimension -- namely \emph{Marstrand's Projection Theorem} -- a striking difference is that the words `at least' cannot be dispensed with: in fact, for many planar self-similar sets of dimension $s < 1$, we prove that the Assouad dimension of projections can attain both values $s$ and $1$ for a set of directions of positive measure.

For self-similar sets, our investigation splits naturally into two cases: when the group of rotations is discrete, and when it is dense.  In the `discrete rotations' case we prove the following dichotomy for any given projection: either the Hausdorff measure is positive in the Hausdorff dimension, in which case the Hausdorff and Assouad dimensions coincide; or the Hausdorff measure is zero in the Hausdorff dimension, in which case the Assouad dimension is equal to 1.  In the `dense rotations' case we prove that every projection has Assouad dimension equal to one, assuming that the planar set is not a singleton.
 
As another application of our results, we show that there is no \emph{Falconer's Theorem} for Assouad dimension.  More precisely, the Assouad dimension of a self-similar (or self-affine) set is not in general almost surely constant when one randomises the translation vectors.
\\ \\ 
\emph{Mathematics Subject Classification} 2010: 28A80, 28A78.
\\
\emph{Key words and phrases}: Assouad dimension, projection, self-similar set.
\end{abstract}

\section{Introduction}

One of the most fundamental dimension theoretic questions in geometric measure theory is: \emph{how does dimension behave under orthogonal projection}?  This line of research began with the seminal paper of Marstrand from 1954 \cite{Marstrand}, influenced by some earlier work of Besicovitch.  One version of Marstrand's Projection Theorem states that if $F \subseteq \mathbb{R}^2$ is an analytic set with Hausdorff dimension $\dim_\text{H} F = s \in [0,2]$, then the Hausdorff dimension of the orthogonal projection of $F$ in almost every direction is $\min\{1, s\}$, which is as big as it can be.  Here `almost every' refers to Lebesgue measure on the interval $[0,\pi)$, with projections parameterised in the obvious way.  For the purposes of this paper, the important thing about this projection theorem is that, no matter what analytic set $F \subseteq \mathbb{R}^2$ one considers, the Hausdorff dimension of the projection $\pi F$ is \emph{almost surely constant}, i.e., takes the same value for almost every $\pi$.

Other important notions of dimension include the upper and lower packing dimension and upper and lower box-counting dimension, see \cite{Falconer, Mattila}.  The behaviour of these dimensions under projection is rather more subtle than for the Hausdorff dimension, but is nevertheless well-studied.  For example, for a compact planar set with (upper) packing dimension $s$, it is possible for the packing dimensions of all the projections to be strictly less than $\min\{1,s\}$.  This was first demonstrated by Maarit J\"arvenp\"a\"a \cite{Jarvenpaa} and general almost sure upper and lower bounds were given by Falconer and Howroyd \cite{FalconerHowroyd1}. Moreover, these bounds are sharp.   However, in the mid-1990s it became clear that each of these dimensions is at least \emph{almost surely constant} under projection, in the same sense that the Hausdorff dimension is.  The precise value of the constant is more complicated than simply $\min\{1, s\}$, but can be stated in terms of \emph{dimension profiles}, see \cite{FalconerHowroyd2, Howroyd}.

There are also natural higher dimensional analogues of these projection results. In particular, the higher dimensional analogue of Marstrand's theorem, where one considers projections of analytic sets $ F \subseteq \mathbb{R}^d$ to $k$-planes, was proved by Mattila in 1975 \cite{MattilaProjections}.  In this setting the almost sure value of the Hausdorff dimension is given by $\min\{k, \dim_\H F\}$ and `almost sure' refers to the natural invariant measure on the Grassmannian manifold $G_{d,k}$, which consists of all $k$ dimensional subspaces of $\mathbb{R}^d$.

For more information on the rich and fascinating topic of projections of fractal sets and measures, see the recent survey papers \cite{FalconerFraserJin, MattilaSurvey} and the references therein.

 The Assouad dimension is another notion of dimension, which has been very useful as a tool in several disparate areas of mathematics. In recent years it has been gaining more attention in the setting of fractal geometry and geometric measure theory.  As such, it is natural to consider the fundamental geometric properties of the Assouad dimension, such as its behaviour under orthogonal projections. We recall the definition here, but refer the reader to \cite{Robinson, Fraser, Luukkainen} for more details.  In particular, the Assouad dimension of a totally bounded set is always at least as big as the upper box-counting dimension (which is itself always at least as big as each of the Hausdorff, lower box-counting and upper and lower packing dimensions). For any non-empty subset $E \subseteq \mathbb{R}^d$ and $r>0$, let $N_r (E)$ be the smallest number of open sets with diameter less than or equal to $r$ required to cover $E$.  The \emph{Assouad dimension} of a non-empty set $F \subseteq \mathbb{R}^d$ is then given by
\begin{eqnarray*}
\dim_\text{A} F & = &  \inf \Bigg\{ \  s \geq 0 \  : \ \text{      $ (\exists \, C>0)$ $(\forall \, R>0)$ $(\forall \, r \in (0,R) )$ $(\forall \, x \in F)$ } \\ 
&\,& \hspace{40mm} \text{ $N_r\big( B(x,R) \cap F \big) \ \leq \ C \bigg(\frac{R}{r}\bigg)^s$ } \Bigg\}
\end{eqnarray*}
where $B(x,R)$ denotes the open Euclidean ball centred at $x$ with radius $R$. It is well-known that the Assouad dimension is always an upper bound for the Hausdorff dimension.

In this paper we prove that, unlike the Hausdorff, packing and box dimensions discussed above, the Assouad dimension of orthogonal projections of a compact set $F \subset \mathbb{R}^{2}$ need \emph{not} be almost surely constant as a function of the projection angle. However, in analogy with the result for Hausdorff dimension, the essential infimum of the said function is at least $\min\{\dim_{\textup{A}} F,1\}$; even if $F$ is not compact, or even not analytic. These results are discussed in Section \ref{mainGeneral}.

We establish the non-constancy result via a detailed study of the Assouad dimensions of projections of planar self-similar sets.  The Hausdorff dimension of projections of self-similar sets has attracted a lot of attention in recent years, see Section \ref{SSsection}, and thus it is natural to consider the analogous questions for Assouad dimension.  Our results for self-similar sets are discussed in detail in Section \ref{mainresults}, and the applications concerning non-constancy and Falconer's Theorem (mentioned in the abstract) will be presented in Sections \ref{nomarstrand} and \ref{nofalconer} respectively.  On route to proving our main result for self-similar sets, we obtain new information about the Assouad dimension of graph-directed self-similar sets in the line with overlaps, Theorem \ref{GDassouad}, which is the natural extension of \cite[Theorem 1.3]{Fraseretal} to the graph-directed setting and is of independent interest.  

Many questions remain unanswered by the results in this paper, and we pose some of them in Sections \ref{mainGeneral} through \ref{selfAffineSets}. In particular, very little is known for self-affine sets, and in higher dimensions.

\subsection{A short introduction to self-similar sets} \label{SSsection}

Self-similar sets are arguably the most fundamental class of fractal set and have been studied extensively, see \cite{hutchinson, Falconer}.  Let $\{S_i\}_{i \in \mathcal{I}}$ be a finite collection of contracting similarities mapping $[0,1]^d$ into itself.  By \emph{similarity}, we mean that for each $i \in \mathcal{I}$, there exists a similarity ratio $c_i \in (0,1)$ such that for all $x,y \in \mathbb{R}^d$ we have
\[
\lvert S_i(x) - S_i(y) \rvert \ = \ c_i \lvert x-y \rvert
\]
which means that the contractions $S_i$ scale uniformly by $c_i$ in every direction.  As such we may decompose each $S_i$ uniquely as
\[
S_i(x) = c_i O_i (x) + t_i  \qquad (x \in \mathbb{R}^d)
\]
where $O_i \in O(d)$ is a $d \times d$ orthogonal matrix and $t_i \in \mathbb{R}^d$ is a translation.  Here and later $O(d)$ denotes the orthogonal group consisting of all $d \times d$ orthogonal matrices and $SO(d)$ denotes the special orthogonal group, i.e., the subgroup of $O(d)$ consisting of orientation preserving matrices.  A fundamental result of Hutchinson \cite{hutchinson} states that there is a unique attractor of the iterated function system (IFS) $\{S_i\}_{i \in \mathcal{I}}$, that is, a unique non-empty compact set $F \subseteq [0,1]^d$ satisfying
\[
F \ = \ \bigcup_{i \in \mathcal{I}} S_i(F).
\]
The set $F$ is called self-similar and often has a rich fractal structure.  A self-similar set satisfies the open set condition (OSC) (for a given IFS defining it) if there exists a non-empty open set $U \subseteq [0,1]^d$ such that
\[
\bigcup_{i \in \mathcal{I}} S_i(U) \ \subseteq  \ U
\]
with the sets $S_i(U)$ pairwise disjoint. If $F$ satisfies the OSC, then its Hausdorff and Assouad dimensions are given by $\min\{s,d\}$ where $s$ is the \emph{similarity dimension} given by solving the equation
\[
\sum_{i \in \mathcal{I}} c_i^s = 1,
\]
often referred to as the \emph{Hutchinson-Moran formula}.  If the OSC is not satisfied, then the dimensions are more difficult to get hold of and indeed the Assouad and Hausdorff dimensions may be distinct \cite{Fraser, Fraseretal}.  One still expects the Hausdorff dimension to be given by $\min\{s,d\}$, unless there is a good reason for it not to be, such as exact overlaps in the construction, see \cite[Question 2.6]{update}.  Recent major advances were made in this area by Hochman \cite{hochman1, hochman2}.   There has also been intense interest in the dimension theory of the projections of self-similar sets in recent years, see the survey \cite{shmerkinsurvey}.  In particular, we wish to mention the following theorem:
\begin{thm} \label{hausdorffprojection}
Let $F\subseteq [0,1]^d$ be a self-similar set containing at least two points, suppose that the group generated by $\{O_i\}_{i \in \mathcal{I}}$ is dense in $O(d)$ or $SO(d)$ and fix $k \in \mathbb{N}$ less than $d$.  Then 
\[
\dim_\text{\emph{H}}  \pi F \ = \ \min\{k,\dim_\text{\emph{H}} F\}
\]
for \emph{\textbf{all}} $\pi \in G_{d,k}$. Here $G_{d,k}$ refers to the family of all orthogonal projections onto $k$-dimensional subspaces of $\mathbb{R}^{d}$. Also being `dense' in $O(d)$ or $SO(d)$ refers to the topology of pointwise convergence. 
\end{thm}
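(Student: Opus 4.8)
The approach is to reduce the `all directions' statement to the classical `almost every direction' statement (Marstrand's theorem in the plane, Mattila's theorem in general), which I take as known, using the self-similarity of $F$ together with the density of the rotation group. Write $\psi(V) = \dim_{\mathrm{H}} \pi_V F$ for a $k$-plane $V$, and set $s = \dim_{\mathrm{H}} F$ and $M = \min\{k,s\}$. The upper bound $\psi(V) \le M$ is immediate and direction-independent: orthogonal projection is $1$-Lipschitz, so $\psi(V) \le s$, and the image lies in a $k$-dimensional space, so $\psi(V) \le k$. Thus the whole content is the lower bound $\psi(V) \ge M$ for \emph{every} $V$.

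The first key step is a replication inequality coming from self-similarity. For any finite word $\mathbf{i} = (i_1,\dots,i_n)$ we have $S_{\mathbf{i}}(F) \subseteq F$, and $S_{\mathbf{i}} = c_{\mathbf{i}} O_{\mathbf{i}} + t_{\mathbf{i}}$ with $O_{\mathbf{i}} = O_{i_1}\cdots O_{i_n}$. Hence $\pi_V(S_{\mathbf{i}} F)$ is a scaled and translated copy of $(\pi_V \circ O_{\mathbf{i}})(F)$; since $\pi_V \circ O_{\mathbf{i}}$ is, up to a fixed isometry, orthogonal projection onto $O_{\mathbf{i}}^{-1} V$, and Hausdorff dimension is invariant under similarities, this yields
\[
\psi(V) \ \ge \ \psi\big(O_{\mathbf{i}}^{-1} V\big) \qquad \text{for every word } \mathbf{i}.
\]
The second key step exploits density. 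The orthogonal parts $\{O_{\mathbf{i}}\}$ form a sub-semigroup of the compact group $G \in \{O(d), SO(d)\}$; the closure of a sub-semigroup of a compact group is a subgroup, so this closure contains the group generated by $\{O_i\}$ and is therefore all of $G$. Consequently $\{O_{\mathbf{i}}^{-1}\}$ is dense in $G$, and since $G$ acts transitively on the Grassmannian $G_{d,k}$, the directions $\{O_{\mathbf{i}}^{-1} V\}$ are dense in $G_{d,k}$. Combined with the replication inequality, $\psi(V)$ dominates $\psi$ on a dense set of directions.

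The main obstacle is that this is not yet enough: the dense set of directions just produced may have measure zero, Marstrand's theorem only guarantees $\psi = M$ on a set of full measure, and $\psi$ is not known to be lower semicontinuous, so a dense-to-everywhere upgrade is not formal. To close this gap I would pass to self-similar measures and argue with local entropy averages in the spirit of Hochman and Shmerkin. Fix a self-similar Bernoulli measure $\mu$ on $F$; the dimension of $\pi_V \mu$ is bounded below by the average over dyadic scales of the local entropies of $\pi_V \mu$, and the self-similar structure reproduces $\mu$ at each scale after applying one of the maps $O_i$. Because the rotations are dense, the accumulated rotation cocycle equidistributes in $G$ along almost every branch, so the scale-average of the projected local entropies converges to the spatial average over all directions of the projected entropy; by the a.e.\ (Marstrand/Mattila) statement this average is the maximal value $\min\{k,\dim \mu\}$, and crucially the computation is valid for the fixed direction $V$ rather than merely for almost every direction. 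Hence $\dim \pi_V \mu = \min\{k,\dim \mu\}$ for every $V$.

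Finally I would remove the dependence on the particular measure: since $F$ is self-similar, its Hausdorff dimension is approximated by the dimensions of its self-similar measures, so choosing $\mu$ with $\dim \mu$ arbitrarily close to $s$ and using $\dim_{\mathrm{H}} \pi_V F \ge \dim \pi_V \mu = \min\{k,\dim \mu\}$ gives $\psi(V) \ge M$ for every $V$, completing the lower bound. I expect the genuinely hard part to be exactly the uniformity in $V$ of the entropy-average computation --- turning the averaged, almost sure Marstrand bound into an every-direction bound --- since the soft semigroup argument by itself only reaches a measure-zero set of directions.
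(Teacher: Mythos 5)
You should know at the outset that the paper contains no proof of Theorem \ref{hausdorffprojection}: it is stated as background and attributed to Hochman and Shmerkin \cite{HochmanShmerkin}, with the planar case due earlier to Peres and Shmerkin \cite{peresshmerkin}, and with the separation hypotheses removed by Farkas \cite{Farkas} and Falconer and Jin \cite{FalconerJin}. Measured against those sources, your outline is essentially a reconstruction of the known argument rather than a new route: the Lipschitz upper bound, the replication inequality $\dim_{\mathrm{H}}\pi_V F \geq \dim_{\mathrm{H}}\pi_{O_{\mathbf{i}}^{-1}V}F$ (via $\pi_V \circ O = O \circ \pi_{O^{-1}V}$ and invariance of Hausdorff dimension under similarities), and the fact that a closed subsemigroup of a compact group is a subgroup are all correct, and you rightly diagnose that density of $\{O_{\mathbf{i}}^{-1}V\}$ in $G_{d,k}$ proves nothing by itself, because $V \mapsto \dim_{\mathrm{H}}\pi_V F$ has no useful semicontinuity. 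The hard core you defer to --- local entropy averages plus equidistribution of the rotation cocycle, turning the almost-everywhere Marstrand--Mattila bound into a bound at a fixed $V$ --- is precisely the content of \cite{HochmanShmerkin}, so your location of the difficulty is accurate.

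The one place where you present as routine something that is genuinely a theorem is the final step: ``the Hausdorff dimension of $F$ is approximated by the dimensions of its self-similar measures.'' In the presence of overlaps this is not formal, and, more importantly for your argument, the approximating measure must live on a system whose orthogonal parts still generate a dense subgroup: the natural move --- passing to a subsystem satisfying the strong separation condition with dimension at least $\dim_{\mathrm{H}}F - \varepsilon$ --- does not preserve this automatically, since the orthogonal parts of the chosen subsystem words may generate a much smaller group, and then the all-directions entropy argument no longer applies to that measure. Arranging small dimension loss, strong separation, and dense rotations simultaneously is exactly what Farkas \cite{Farkas} supplies (alternatively, Falconer and Jin \cite{FalconerJin} run the measure argument directly on overlapping systems). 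So your skeleton is the right one and matches the literature the paper cites, but this last reduction is a genuine gap as written --- known to be fixable, yet only by a nontrivial result you would need to prove or invoke explicitly.
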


The particular interest of this result is that `dense rotations' guarantees that there are \emph{no} exceptional directions; a much stronger statement than Marstrand's Theorem which says that the exceptional directions form a null set. This result is essentially due to Hochman and Shmerkin's breakthrough work \cite{HochmanShmerkin}, although they stated the result assuming some separation conditions.  These conditions were explicitly removed by Farkas \cite{Farkas} and Falconer-Jin \cite{FalconerJin}.  The result in the planar case was obtained earlier by Peres and Shmerkin \cite{peresshmerkin}.

\section{Results}

\subsection{Projections of general sets} \label{mainGeneral} 

We parameterise orthogonal projections onto lines in $\mathbb{R}^{2}$ by  $\theta \in [0,\pi)$ in the natural way, by letting $\pi_\theta$ be the projection onto the  line $l_\theta$ passing through the origin and forming an angle $\theta$ with the positive $x$-axis. With this notation, our main result for general sets is the following:

\begin{thm}\label{mainTheoremForGeneralSets} Assume that $F \subset \mathbb{R}^{2}$. Then, for almost all $\theta \in [0,\pi)$,
\begin{displaymath} \dim_{\textup{A}} \pi_{\theta} F \geq \min\{\dim_{\textup{A}} F, 1\}. \end{displaymath}
\end{thm}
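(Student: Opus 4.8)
The plan is to prove this (superficially pointwise) statement through the theory of \emph{weak tangents}, reducing matters to \emph{Marstrand's Projection Theorem} applied to a single auxiliary compact set. Since the Assouad dimension is unchanged under taking closures, and continuity of $\pi_\theta$ gives $\pi_\theta F \subseteq \pi_\theta \overline{F} \subseteq \overline{\pi_\theta F}$ so that $\dim_{\textup{A}} \pi_\theta F = \dim_{\textup{A}} \pi_\theta \overline{F}$, I may assume $F$ is closed. Recall that a compact set $T \subseteq \overline{B(0,1)}$ is a weak tangent of $F$ if there are homotheties $T_n(x) = R_n^{-1}(x - x_n)$ with $R_n \to 0$ and $x_n \in F$ such that $T_n(F) \cap \overline{B(0,1)} \to T$ in the Hausdorff metric. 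Two facts about weak tangents drive the argument: first, every weak tangent $S$ of a set $G$ satisfies $\dim_{\textup{A}} G \geq \dim_{\textup{A}} S \geq \dim_{\textup{H}} S$; and second -- the key input, which unwinds the one-scale lower bounds hidden in the definition of $\dim_{\textup{A}}$ into an all-scales Hausdorff estimate -- for every $\epsilon > 0$ there is a weak tangent $T$ of $F$ with $\dim_{\textup{H}} T \geq \dim_{\textup{A}} F - \epsilon$.

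Fix such a $T$. The heart of the proof is the claim that, for \emph{every} $\theta \in [0,\pi)$,
\[
\dim_{\textup{A}} \pi_\theta F \geq \dim_{\textup{H}} \pi_\theta T.
\]
Because $\pi_\theta$ is linear it intertwines the homotheties $T_n$ with the line-homotheties $\tilde T_n(y) = R_n^{-1}(y - \pi_\theta x_n)$, that is $\pi_\theta \circ T_n = \tilde T_n \circ \pi_\theta$, and because $\pi_\theta$ is $1$-Lipschitz it preserves Hausdorff convergence of subsets of the fixed compact set $\overline{B(0,1)}$. Passing to a subsequence so that $\tilde T_n(\pi_\theta F) \cap [-1,1]$ converges to a weak tangent $S_\theta$ of $\pi_\theta F$, the inclusion $\pi_\theta\big(T_n(F) \cap \overline{B(0,1)}\big) \subseteq \tilde T_n(\pi_\theta F) \cap [-1,1]$ passes to the limit and yields $\pi_\theta T \subseteq S_\theta$. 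The two facts above then give $\dim_{\textup{A}} \pi_\theta F \geq \dim_{\textup{A}} S_\theta \geq \dim_{\textup{H}} S_\theta \geq \dim_{\textup{H}} \pi_\theta T$, proving the claim.

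With the claim in hand the projection theorem is almost immediate. The set $T$ is compact, hence analytic, so Marstrand's Projection Theorem applies to the single set $T$: for almost every $\theta$ we have $\dim_{\textup{H}} \pi_\theta T = \min\{1, \dim_{\textup{H}} T\} \geq \min\{1, \dim_{\textup{A}} F - \epsilon\}$. Combined with the claim this gives $\dim_{\textup{A}} \pi_\theta F \geq \min\{1, \dim_{\textup{A}} F - \epsilon\}$ for almost every $\theta$. Applying this with $\epsilon = 1/m$ and intersecting the resulting countably many full-measure sets of directions yields $\dim_{\textup{A}} \pi_\theta F \geq \min\{1, \dim_{\textup{A}} F\}$ for almost every $\theta$, as required. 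Note that the exceptional null set arises \emph{only} from the application of Marstrand's theorem to the fixed tangent $T$; the displayed inequality holds at every single angle.

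I expect the main obstacle to be the key input in the first paragraph: producing a weak tangent whose \emph{Hausdorff} (and not merely Assouad or box) dimension is close to $\dim_{\textup{A}} F$. The definition of $\dim_{\textup{A}} F$ only supplies, for each large ratio $R/r$, a single pair of scales and a single ball on which $F$ is richly covered, whereas a Hausdorff lower bound on the tangent requires this covering deficiency to persist at all scales of the tangent simultaneously; bridging that gap is exactly the content of the weak-tangent characterisation of Assouad dimension (in the spirit of Furstenberg's microsets and the weak-tangent theory of Mackay--Tyson and K\"aenm\"aki--Ojala--Rossi), and it is the step I would need to invoke or reprove carefully. A secondary, more technical point is the bookkeeping in the second paragraph: ensuring that the truncation to $\overline{B(0,1)}$ and its image interval $[-1,1]$ interact correctly with both the Hausdorff limits and the intertwining relation, so that $\pi_\theta T$ genuinely embeds into a bona fide weak tangent of $\pi_\theta F$.
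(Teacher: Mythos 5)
Your proposal is correct, but it takes a genuinely different route from the paper. The paper proves Theorem \ref{mainTheoremForGeneralSets} by direct discretisation: it extracts from the definition of Assouad dimension a sequence of renormalised $(\delta_{i},s)$-sets $P_{i} \subset B(0,1)$, pigeonholes these down to $(\delta_{i},1)$-sets (Corollary \ref{frostman}), applies a $\delta$-discretised Marstrand-type incidence estimate (Proposition \ref{discreteMarstrand}, from \cite{Orponen}) to bound the number of $\delta_{i}$-separated ``bad'' directions at each scale pair, and concludes with Borel--Cantelli. You instead route everything through weak tangents: you invoke the realisation theorem producing a weak tangent $T$ with $\dim_{\textup{H}} T \geq \dim_{\textup{A}} F - \epsilon$ (Furstenberg's microset theorem in the compact case; K\"aenm\"aki--Ojala--Rossi for general closed sets -- and you genuinely need the closed-set version for unbounded $F$, since Assouad dimension is not recovered by compact truncations), use the intertwining $\pi_{\theta} \circ T_{n} = \tilde{T}_{n} \circ \pi_{\theta}$ to embed $\pi_{\theta} T$ into a weak tangent of $\overline{\pi_{\theta} F}$ (the closure is needed here, as $\pi_{\theta}F$ need not be closed when $F$ is unbounded, but this is harmless; moreover the paper's own Proposition \ref{pseudoweaktangent} on weak pseudo-tangents, which requires only $\rho_{\mathcal{H}}$-convergence, would let you skip the subsequence extraction and the compactness bookkeeping entirely), and then apply classical Marstrand once, to the single compact set $T$. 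Both arguments are sound, and the two flagged technical points in your final paragraph are exactly the right ones: the first is a known but deep theorem you may legitimately cite, the second is routine. As for what each approach buys: yours is conceptually cleaner, makes the inequality $\dim_{\textup{A}} \pi_{\theta} F \geq \dim_{\textup{H}} \pi_{\theta} T$ hold at \emph{every} angle so that the null set comes solely from Marstrand's theorem -- whence exceptional-set estimates for Hausdorff dimension of projections (e.g.\ Kaufman-type bounds) transfer verbatim to Assouad dimension, and the argument generalises immediately to projections onto $k$-planes via Mattila's theorem -- but it leans on the tangent-realisation theorem, which in the closed-set generality you need postdates this paper. The paper's proof is self-contained modulo the discrete incidence estimate, is quantitative at each pair of scales $(r_{i},R_{i})$, and applies to arbitrary $F \subset \mathbb{R}^{2}$ with no closure reduction.
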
 

Theorem \ref{mainTheoremForGeneralSets} will be proved in Section \ref{proofForGeneralSets}. The lower bound is obviously sharp, and in Section \ref{nomarstrand} we demonstrate by example that the inequality cannot be replaced by an equality: the function $\theta \mapsto \dim_{\textup{A}} \pi_{\theta} F$ need not be almost surely constant, and it can attain both the values $\dim_{\textup{A}} F$ and $1$ for a set of $\theta$'s with positive measure. We do not know if other values are possible here, or if there can be three distinct values:
\begin{ques}
Given a set $F \subset \mathbb{R}^{2}$, how many distinct values can the Assouad dimension of $\pi_\theta F$ assume for a set of $\theta$'s with positive measure? If there are only two such values, are they always $\dim_{\textup{A}} F$ and $1$? 
\end{ques}

\subsection{Projections of self-similar sets} \label{mainresults}

In this section we state our main results for self-similar sets, which are rather complete in the context of Assouad dimension. In the $2$-dimensional setting the groups $O(2)$ and $SO(2)$ are particularly simple: $O(2)$ consists of counterclockwise rotations by angles $\alpha \in [0,2\pi)$ and the corresponding reflections with orientation reversed and $SO(2)$ just consists of the rotations.  As such, the group generated by $\{O_i\}_{i \in \mathcal{I}}$ will be dense if and only if one of the $O_i$ rotates by an irrational multiple of $\pi$ and otherwise it will be discrete (in fact finite).

Here is the main result for self-similar sets:

\begin{thm} \label{assouadprojection}
Let $F\subseteq \mathbb{R}^{2}$ be self-similar set containing at least two points, and first suppose that the group generated by $\{O_i\}_{i \in \mathcal{I}}$ is discrete.  Then, for a given $\theta \in [0,\pi)$, we have:
\begin{enumerate}
\item If $\mathcal{H}^{\dim_\text{\emph{H}}  \pi_\theta F} ( \pi_\theta F) > 0$, then $\dim_\text{\emph{A}}  \pi_\theta F \ = \ \dim_\text{\emph{H}}  \pi_\theta F$
\item If $\mathcal{H}^{\dim_\text{\emph{H}}  \pi_\theta F} ( \pi_\theta F) =0$, then $\dim_\text{\emph{A}}  \pi_\theta F \ = \ 1$.
\end{enumerate}
Secondly, suppose that the group generated by $\{O_i\}_{i \in \mathcal{I}}$ is dense in $O(2)$ or $SO(2)$.  Then
\[
\dim_\text{\emph{A}}  \pi_\theta F \ = \ 1
\]
for \textbf{all} $\theta \in [0,\pi)$.
\end{thm}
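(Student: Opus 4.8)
The plan is to handle the two cases by different means: the \emph{discrete} case reduces to the graph-directed machinery of Theorem~\ref{GDassouad}, while the \emph{dense} case is settled by a weak-tangent construction. For the discrete case, since $O(2)$ and $SO(2)$ are compact, a discrete subgroup is finite; thus $G=\langle O_i\rangle$ is finite and the orbit $\Theta=\{\theta_1,\dots,\theta_M\}\subseteq[0,\pi)$ of $\theta$ under $G$ is finite. The single projection $\pi_\theta F$ is not self-similar, but the \emph{family} $\{\pi_{\theta_j}F\}_{j=1}^{M}$ is: writing $S_i(x)=c_iO_i(x)+t_i$ and using the elementary identity $\pi_{\theta_j}\circ O_i=\pm\,\pi_{\theta_{j'}}$ for a suitable $\theta_{j'}\in\Theta$, the relation $F=\bigcup_i S_i(F)$ projects to
\[
\pi_{\theta_j}F=\bigcup_{i\in\mathcal{I}}\Big(\pm\,c_i\,\pi_{\theta_{j'(i,j)}}F+\pi_{\theta_j}t_i\Big),\qquad j=1,\dots,M,
\]
exhibiting $(\pi_{\theta_j}F)_{j}$ as the attractor of a graph-directed self-similar system on the line, with vertex set $\Theta$ and possible overlaps.

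I would then apply Theorem~\ref{GDassouad} to this system, obtaining for every vertex -- in particular for $\pi_\theta F$ -- the dichotomy $\dim_{\textup{A}}\pi_\theta F=\dim_{\textup{H}}\pi_\theta F$ or $\dim_{\textup{A}}\pi_\theta F=1$, according to whether the graph-directed weak separation condition holds. It then remains only to match this \emph{separation} dichotomy to the \emph{measure} dichotomy in the statement, via the (graph-directed extension of the) standard equivalence, for self-similar sets in $\mathbb{R}$, between the weak separation condition and positivity of $\mathcal{H}^{s}$ at $s=\dim_{\textup{H}}$. Under this equivalence $\mathcal{H}^{\dim_{\textup{H}}\pi_\theta F}(\pi_\theta F)>0$ corresponds to the separated case ($\dim_{\textup{A}}=\dim_{\textup{H}}$) and $\mathcal{H}^{\dim_{\textup{H}}\pi_\theta F}(\pi_\theta F)=0$ to the overlapping case ($\dim_{\textup{A}}=1$), as required.

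For the dense case note first that $\dim_{\textup{A}}\pi_\theta F\le1$ automatically, and that by Theorem~\ref{hausdorffprojection} every projection already has full Hausdorff dimension $\min\{1,\dim_{\textup{H}}F\}$, which disposes of the range $\dim_{\textup{H}}F\ge1$; the real content is $\dim_{\textup{H}}F<1$, where each $\pi_\theta F$ has Hausdorff dimension strictly below $1$ yet we must force Assouad dimension exactly $1$. I would use the weak-tangent (microset) characterisation, namely $\dim_{\textup{A}}\pi_\theta F\ge\dim_{\textup{H}}T$ for any weak tangent $T$, and exhibit a weak tangent that contains an interval. The mechanism is that zooming into $\pi_\theta F$ inside a cylinder $S_w(F)$ reveals, through the projected relation $\pi_\theta\circ S_w=\pm\,c_w\,\pi_{\theta_w}+\mathrm{const}$, an affine copy of $\pi_{\theta_w}F$, and density of $G$ makes the directions $\theta_w$ dense in $[0,\pi)$; since a single position vector of length $\ell$ projects onto the whole interval $[-\ell,\ell]$ as its direction sweeps the circle, superimposing such copies at a common scale and location fills an interval in the blow-up.

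The main obstacle is precisely this superposition in the dense case: the copies realising different directions $\theta_w$ occur at different scales $c_w$ and different locations $\pi_\theta t_w$, so one must select words that simultaneously have comparable contraction ratios -- achievable by padding, since $\min_i c_i>0$ -- and projected translates clustering in a common window, so that a single rescaling exposes a dense set of directions at once; a compactness argument then extracts a convergent blow-up whose limit contains an interval. In the discrete case the analogous, and milder, point requiring care is verifying the separation-versus-measure equivalence for the graph-directed system in the presence of overlaps.
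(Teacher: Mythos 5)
Your discrete-case argument coincides with the paper's: the projections over the orbit of $\theta$ under the rotation group (finite, since discrete subgroups of the compact group $O(2)$ are finite) form a graph-directed self-similar system on the line, and Theorem~\ref{GDassouad} is applied to it. The only remark is that nothing remains to be ``matched'': Theorem~\ref{GDassouad}, as stated and proved, already packages the measure dichotomy together with the separation dichotomy --- under the GDWSP, $\mathcal{H}^{s}(F_v)>0$ follows from Das--Edgar plus Falconer's implicit theorems, while without the GDWSP one first gets $\dim_{\textup{A}}F_v=1$ by explicit construction and then $\mathcal{H}^{s}(F_v)=0$, since positive measure would force Ahlfors regularity (Farkas--Fraser) and hence Assouad dimension $s<1$. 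So cases \emph{1.} and \emph{2.} are read off directly, the boundary case $\dim_{\textup{H}}\pi_\theta F=1$ being trivial.

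The dense case, however, has a genuine gap at exactly the step you flag as the ``main obstacle''. First, the plan to expose ``a dense set of directions at once'' at a common scale cannot work for all the systems the theorem covers: after the natural reduction (Lemma~\ref{reduction}) --- or simply when $F$ is itself the attractor of two maps with common ratio $\rho$ and a common irrational rotation $\alpha$ --- every word of length $m$ has ratio $\rho^{m}$ and rotation exactly $m\alpha$, so scale determines rotation. Words of comparable ratio (within any fixed factor) have only boundedly many lengths, hence expose only boundedly many directions $\theta_w$, and no $\delta$-net of directions is available at a single scale once $\delta$ is small; padding cannot decouple scale from rotation here. Second, even the weaker requirement that many cylinders of common length $k$ have projected translates clustering within $\varepsilon\rho^{k}$ \emph{for a prescribed direction} $\theta$ cannot be obtained by selection or pigeonholing when the dimension is below $1$: there are $2^{k}$ cylinders but of order $(\varepsilon\rho^{k})^{-1}$ windows of that width in $[0,1]$, and $2^{k}\rho^{k}\to 0$ precisely when the similarity dimension is less than $1$, so the counting runs the wrong way.

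This clustering is the real engine of the proof and is nontrivial: it is Ero\u{g}lu's lemma (Lemma~\ref{eroglu}), which produces, by an argument ranging over \emph{all} directions simultaneously, \emph{some} direction in which $N$ equal-length cylinders project $\varepsilon\rho^{k}$-close; Corollary~\ref{erCor} then steers this to an arbitrary prescribed $\theta$, to base point $0$, and to rotations near the identity, using the irrationality of $\alpha$. With that input the paper also sidesteps your superposition entirely: Lemma~\ref{indClaim} builds, by induction on $M$, a set of $M$ nearly equally spaced points in $\pi_{\theta'}F$ for some direction $\theta'$ depending on $M$, adding one point per step --- the new point is a clustered cylinder $S_{\mathbf{i}_2}(0)$ at distance $\geq N\rho^{k}/10$ from the configuration, so that an adjustment of the direction of size $O(1/N)$, which is at most $c\tau$ since $N \geq C/\tau$, slides it a full gap $2\tau\rho^{k}$ while perturbing the existing points by less than the tolerance budgeted by the function $\psi$. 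Corollary~\ref{improvedclaim} transfers the configuration to every $\theta$, and an $M$-term near-arithmetic progression at scale $\tau$ yields $N_{\tau/4}(B(x,4M\tau)\cap \pi_{\theta}F) \geq M$, giving Assouad dimension $1$ straight from the definition; no tangent containing an interval is ever produced. Your weak-tangent endgame (via Proposition~\ref{pseudoweaktangent}) would be a legitimate alternative finish, but without an Ero\u{g}lu-type lemma and a one-point-at-a-time steering induction in place of the simultaneous superposition, the dense case does not go through as proposed.
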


The discrete rotations case of Theorem \ref{assouadprojection} will be proved in Section \ref{discreterotationsproof} and the dense rotations case will be proved in Section \ref{denserotationsproof}.

We emphasise that we assume no separation conditions for $F$, in particular the OSC may fail and the Hausdorff dimension of $F$ may be strictly smaller than the similarity dimension.  By Marstrand's Theorem, we have that, for almost all $\theta \in [0, \pi)$, $\dim_\text{H}  \pi_\theta F \ = \  \min\{1,\dim_\text{H}  F\}$, but Farkas \cite[Theorem 1.2]{Farkas} showed that in the discrete rotations case there is always at least one direction $\theta \in [0, \pi)$ where the dimension drops, i.e. $\dim_\text{H}  \pi_\theta F \ < \  \min\{1,\dim_\text{H}  F\}$, provided the Hausdorff dimension of $F$ is given by the similarity dimension and this value is less than or equal to 1.  In the dense rotations case Ero\u{g}lu \cite{eroglu} and Farkas \cite[Theorem 1.5]{Farkas} proved that $\mathcal{H}^{s} ( \pi_\theta F) =0$ for all $\theta \in [0,\pi)$. So, the dichotomy seen in the `discrete rotations' part persists for dense rotations, but case \emph{1.} never occurs.

In the dense rotations case, the Assouad dimension of $\pi_\theta F$ is constant and independent of the dimension of $F$.  Neither of these phenomena are generally manifest in the discrete case, but one can always find \emph{at least one} direction $\theta$ for which the Assouad dimension of $\pi_\theta F$ attains the maximal value of 1, independent of the dimensions of $F$, provided $F$ is not contained in a line.

\begin{thm} \label{assouadprojectionexamples}
Let $F\subseteq [0,1]^2$ be a self-similar set, which is not contained in a line.  Then there exists $\theta \in [0,\pi)$ such that
\[
\dim_\text{\emph{A}}  \pi_\theta F \ = \ 1.
\]
\end{thm}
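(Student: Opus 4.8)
The plan is to use the dichotomy already present in Theorem~\ref{assouadprojection} and to reduce everything to producing a \emph{single} direction in which the projected system overlaps badly enough to force the Assouad dimension up to $1$. First, if the group generated by $\{O_i\}_{i \in \mathcal{I}}$ is dense, the second part of Theorem~\ref{assouadprojection} already gives $\dim_{\textup{A}} \pi_\theta F = 1$ for \textbf{every} $\theta$, so there is nothing to prove. Thus I assume the rotation group is discrete, hence finite. I would also dispose of the case $\dim_{\textup{A}} F \geq 1$ immediately: by Theorem~\ref{mainTheoremForGeneralSets} almost every $\theta$ satisfies $\dim_{\textup{A}} \pi_\theta F \geq \min\{\dim_{\textup{A}} F, 1\} = 1$, and since $\pi_\theta F$ lies in a line we always have $\dim_{\textup{A}} \pi_\theta F \leq 1$, so almost every direction works. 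The substance is therefore the case of a finite rotation group with $\dim_{\textup{A}} F < 1$ (in particular $\dim_{\textup{H}} F < 1$, so no projection can reach dimension $1$ and the argument must exploit overlaps rather than size).

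The key structural observation is that projection converts the self-similar relation $F = \bigcup_i S_i F$ into a \emph{graph-directed} self-similar relation on the finitely many lines in the orbit of $l_\theta$ under the rotation group: writing $S_i(x) = c_i O_i x + t_i$ and identifying $l_\theta$ with $\mathbb{R}$, one has
\[
\pi_\theta S_i F \ = \ c_i \, \pi_{\theta - \alpha_i} F \ + \ \langle t_i, (\cos\theta, \sin\theta) \rangle,
\]
where $\alpha_i$ is the rotation angle of $O_i$ (with an orientation reversal in the reflection case). As the rotation group is finite, $\theta$ ranges over a finite set of nodes and $\{\pi_{\theta'} F\}$ is the attractor of a finite graph-directed similarity system in the line. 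By Theorem~\ref{GDassouad} (the graph-directed extension of the dichotomy for self-similar subsets of the line), it then suffices to find one $\theta$ for which this system \emph{fails the weak separation property}, since failure forces $\dim_{\textup{A}} \pi_\theta F = 1$. Concretely, the property fails at $\theta$ as soon as there are distinct words $u_k \neq v_k$ returning to the node $\theta$ with the same orientation and equal contraction ratios $c_{u_k} = c_{v_k}$, but whose projected translation parts are unequal and satisfy $|d_{u_k}(\theta) - d_{v_k}(\theta)| / c_{u_k} \to 0$; that is, distinct cylinders of the same size that are superexponentially close.

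This is where the hypothesis that $F$ is not contained in a line enters. The translation part $d_u(\theta) = \langle T_u, (\cos\theta, \sin\theta) \rangle$ of a word $u$ is the projection of a planar vector $T_u$, and for an equal-ratio pair the relevant quantity is $\langle T_u - T_v, (\cos\theta, \sin\theta) \rangle$, which vanishes exactly when the direction is perpendicular to $w_{u,v} := T_u - T_v$. Because $F$ is not contained in a line (which, for trivial rotations, forces at least three maps with non-collinear fixed points, and is otherwise supplied by the rotations), the vectors $w_{u,v}$ coming from genuinely distinct same-ratio words are nonzero and occupy at least two independent directions; indeed, rescaled, the level-$n$ difference vectors approximate the difference set $F - F$, which is not contained in a line. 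The plan is to pick a limiting direction $d^\ast$ realised by the difference vectors and to set $\theta_0$ perpendicular to $d^\ast$, choosing $d^\ast$ at a point that is approximated by finite-level difference-vector directions to superexponential precision — such a point exists by the nested, (graph-directed) self-similar structure of the set of difference directions. Then there are same-ratio word pairs $(u_k, v_k)$ of growing length with $|\langle w_{u_k,v_k}, (\cos\theta_0, \sin\theta_0) \rangle|$ nonzero but decaying faster than $c_{u_k}$, giving the required accumulation of distinct equal-size cylinders at the identity, and hence failure of the weak separation property.

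I expect the main obstacle to be exactly this last step: arranging one fixed $\theta_0$ at which the coincidences accumulate \emph{at the identity} rather than producing only an isolated exact overlap. A single exact overlap, or overlaps between words of the same length, is removable and does not by itself raise the Assouad dimension; what is genuinely needed is an \emph{inexact}, superexponentially fine family of coincidences. Producing $\theta_0$ will require a careful nested construction controlling, at every scale, both the \emph{direction} of the difference vectors $w_{u_k,v_k}$ (to keep the inner product small) and their non-vanishing (to keep the cylinders distinct), and it is the interaction of this control with the finite graph-directed bookkeeping of rotations and reflections that constitutes the technical heart of the argument.
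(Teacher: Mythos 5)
Your reduction matches the paper's strategy at the top level: the dense-rotations case is dispatched by Theorem \ref{assouadprojection}, and in the discrete case you correctly observe that projections form graph-directed systems in the line, so that by Theorem \ref{GDassouad} it suffices to exhibit \emph{one} direction where the weak separation property fails, i.e.\ distinct equal-ratio words whose projected translation parts are nonzero but of size $o(c_{u_k})$. You have also correctly identified the genuinely delicate point: exact overlaps are useless, and what is needed is an inexact, superexponentially fine family of coincidences at a single fixed $\theta_0$. But precisely at that point your argument stops being a proof. The claim that a limit direction $d^{\ast}$, approximated by finite-level difference-vector directions to superexponential precision, ``exists by the nested, (graph-directed) self-similar structure of the set of difference directions'' is not justified, and the naive version of it fails quantitatively: level-$n$ data determine the direction of a difference vector only up to angular error of order $c^{n}$ (the cylinder diameter), and an angular error of order $c_{u}$ against a unit-length $w_{u,v}$ produces $|\langle w_{u,v}, (\cos\theta_0,\sin\theta_0)\rangle| = O(c_{u})$ --- exactly the borderline order, whereas WSP failure needs $o(c_{u})$. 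So the nested structure by itself yields nothing better than the trivial rate; some mechanism forcing \emph{exact cancellation of the leading terms} must be engineered, and your proposal names this obstacle without overcoming it.

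The paper fills this hole with a different, concrete device. First, a subsystem lemma: since $F$ is not contained in a line and the rotation group is finite, $F$ contains a self-similar set $E$ generated by three maps with a common ratio $c$, \emph{trivial} orthogonal parts, and non-collinear fixed points (take three words with non-collinear fixed points, raise them to a power killing all rotations and reflections, then compose the three high iterates in three different orders to equalise the contraction ratios). This eliminates all the graph-directed bookkeeping you were worried about: as $\theta$ varies, $\pi_\theta E$ runs (up to affine rescaling) through the one-parameter family $E_t$ of attractors of $\{cx,\ cx+1,\ cx+t\}$ with $t \geq 0$. Then, following Bandt and Graf \cite{BandtGraf}, the lacunary choice $t = \sum_{k \geq 0} c^{2^k}$ turns the required superexponential coincidences into an explicit algebraic identity: word pairs whose digit strings cancel the first $m$ lacunary terms of $t$ give maps $S_{\mathbf{i}}^{-1} \circ S_{\mathbf{j}}$ equal to nonzero translations tending to $0$, so the WSP fails, $\dim_{\textup{A}} E_t = 1$ by \cite[Theorem 1.3]{Fraseretal} (or Theorem \ref{GDassouad}), and $\dim_{\textup{A}} \pi_\theta F = 1$ for the corresponding $\theta$ by monotonicity of the Assouad dimension. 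In short: where you hoped a structural approximation argument would produce $\theta_0$, the paper instead \emph{chooses} the projected translation parameter to be a number whose base-$c$ expansion forces the cancellation, and the freedom to make that choice is exactly what the three-map, rotation-free, non-collinear subsystem provides.
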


We will prove Theorem \ref{assouadprojectionexamples} in Section \ref{assouadprojectionexamplesproof}. 




\subsection{There is no direct counterpart to Marstrand's Projection Theorem for Assouad dimension} \label{nomarstrand}

Our key application of Theorem \ref{assouadprojection} is that, unlike the Hausdorff, upper and lower box, and packing dimensions, the Assouad dimensions of orthogonal projections of a (compact) set are not almost surely constant in general.  Thus, we do not have a direct counterpart to Marstrand's Projection Theorem for Assouad dimension.

\begin{thm} \label{mainex}
For any $s$ satisfying $\log_5 3 < s <  1$, there exists a compact set $F \subseteq \mathbb{R}^2$ with Hausdorff and Assouad dimension equal to $s$ for which there are two non-empty disjoint intervals $I, J \subseteq [0, \pi)$ such that
\begin{align*}
&\dim_\text{\emph{A}} \pi_\theta F = s, \qquad \text{for all $\theta \in I$}\\
&\dim_\text{\emph{A}} \pi_\theta F = 1, \qquad \text{for almost all $\theta \in J$}.
\end{align*}
In particular, $\theta \mapsto \dim_\text{\emph{A}} \pi_\theta F$ is not an almost surely constant function.
\end{thm}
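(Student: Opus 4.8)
The plan is to prove the theorem by exhibiting a single explicit self-similar set and then reading off the Assouad dimensions of its projections directly from the dichotomy in Theorem~\ref{assouadprojection}. First I would fix a self-similar set $F\subseteq[0,1]^2$, not contained in a line, defined by an IFS $\{S_i\}_{i\in\mathcal I}$ with $S_i(x)=c_ix+t_i$ and \emph{no} rotational parts (so the generated rotation group is trivial, in particular discrete), chosen to satisfy the OSC in the plane; then $\dim_{\mathrm H}F=\dim_{\mathrm A}F=s$ by the Hutchinson--Moran formula. The key structural point is that, since each $O_i$ is the identity, every projection $\pi_\theta F$ is itself the attractor of the projected IFS $\{x\mapsto c_ix+\pi_\theta(t_i)\}$ on the line, which has the \emph{same} contraction ratios and hence the same similarity dimension $s$. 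Consequently $\dim_{\mathrm H}\pi_\theta F\le s$ always, and by Theorem~\ref{assouadprojection} the Assouad dimension of $\pi_\theta F$ is governed by the positivity or vanishing of its Hausdorff measure in its Hausdorff dimension; for the directions of interest this reduces to whether $\mathcal H^{s}(\pi_\theta F)$ is positive or zero.

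The easy half is the interval $I$. I would place the translations $t_i$ so that, for $\theta$ in some open interval $I$, the first-level pieces $\pi_\theta(S_i(F))=c_i\,\pi_\theta(F)+\pi_\theta(t_i)$ have pairwise disjoint convex hulls. This is an open condition on $\theta$, hence robust on a whole interval, and it gives the OSC for the projected IFS throughout $I$. By the standard theory for self-similar sets satisfying the OSC this yields $0<\mathcal H^{s}(\pi_\theta F)<\infty$ and $\dim_{\mathrm H}\pi_\theta F=s$, so case~(1) of Theorem~\ref{assouadprojection} applies and $\dim_{\mathrm A}\pi_\theta F=s$ for \emph{all} $\theta\in I$.

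The substance of the proof is the interval $J$, where I want $\dim_{\mathrm A}\pi_\theta F=1$ for almost every $\theta$. By case~(2) of Theorem~\ref{assouadprojection} it suffices to arrange that, for a.e.\ $\theta\in J$, one has $\mathcal H^{\dim_{\mathrm H}\pi_\theta F}(\pi_\theta F)=0$. Marstrand's projection theorem \cite{Marstrand} already gives $\dim_{\mathrm H}\pi_\theta F=\min\{1,s\}=s$ for a.e.\ $\theta$, so the entire task reduces to the \emph{vanishing} of $\mathcal H^{s}(\pi_\theta F)$ on $J$. I would engineer this by building a resonance into the IFS: a configuration of maps whose projected images are forced to overlap for every $\theta\in J$ and, crucially, whose overlaps reproduce themselves at every scale through the self-similar structure, so that the projected one-dimensional system never attains the separation needed for positive $\mathcal H^{s}$-measure. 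The resulting overlapping line system can then be fed into the graph-directed machinery of Theorem~\ref{GDassouad} (the natural extension of \cite[Theorem~1.3]{Fraseretal}), which both forces $\mathcal H^{s}(\pi_\theta F)=0$ and pins the Assouad dimension at $1$; the threshold $s>\log_5 3$ reflects the availability of a suitable base-$5$ overlap skeleton, built from three overlapping maps, while keeping $s<1$.

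The main obstacle is precisely this last point. A single pair of maps has coinciding projections only in one direction, so genuine overlap on an \emph{interval} of directions cannot come from one coincidence: it must be produced by a whole self-similar family of coincidences occurring at all scales simultaneously, which is where the base-$5$ combinatorics and the lower bound $\log_5 3$ enter and where the delicate work lies. Granting the two intervals, the function $\theta\mapsto\dim_{\mathrm A}\pi_\theta F$ equals $s$ on $I$ and equals $1$ almost everywhere on $J$; since $s<1$ these values disagree on sets of positive measure, so the function is not almost surely constant, proving Theorem~\ref{mainex}. Verifying $\dim_{\mathrm H}F=\dim_{\mathrm A}F=s$ is then routine once the OSC in the plane has been checked.
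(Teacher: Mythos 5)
Your reduction is exactly the paper's: take a rotation-free planar self-similar set satisfying the OSC (the paper uses the modified Sierpi\'nski triangle $F_c$ with ratio $c \in (1/5,1/3)$, so $s = -\log 3/\log c \in (\log_5 3,1)$), apply the discrete-rotations dichotomy of Theorem \ref{assouadprojection}, and get the interval $I$ from the open condition that the three projected pieces are disjoint intervals (this is where $c < 1/3$ enters). That half of your argument is correct and complete. The gap is the interval $J$. You correctly isolate the needed statement --- $\mathcal{H}^{s}(\pi_\theta F) = 0$ for almost every $\theta$ in some interval --- but the mechanism you sketch for producing it would fail. You propose ``a configuration of maps whose projected images are forced to overlap for every $\theta \in J$'' with overlaps reproducing at all scales. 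Exact coincidences cannot work this way: for two distinct words $\mathbf{i}, \mathbf{j}$ of equal length, the projected maps $x \mapsto c^{|\mathbf{i}|}x + \pi_\theta(t_{\mathbf{i}})$ and $x \mapsto c^{|\mathbf{j}|}x + \pi_\theta(t_{\mathbf{j}})$ coincide only when $\pi_\theta$ annihilates $t_{\mathbf{i}} - t_{\mathbf{j}}$, i.e.\ at a single $\theta$ per pair; a countable union of singletons cannot cover an interval, so no fixed IFS has self-replicating exact overlaps throughout $J$. Moreover, feeding the projected system into Theorem \ref{GDassouad} does not ``force'' $\mathcal{H}^s = 0$: that theorem only converts a \emph{verified} failure of the (GD)WSP at a given $\theta$ into the conclusion, and establishing WSP failure for almost every $\theta$ in an interval is precisely the hard analytic content you have deferred. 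Your reading of the threshold is also off: $\log_5 3$ does not come from a ``base-$5$ overlap skeleton'' but simply from the constraint $c > 1/5$, which is what makes the set of non-injectivity directions contain an interval, combined with $c < 1/3$ for the OSC directions.

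The paper closes exactly this hole not by a new construction but by citing Peres, Simon and Solomyak \cite{PeresSimonSolomyak}: for $c \in (1/5,1/3)$ the set of intersection parameters $\mathcal{IP} = \{\theta : \pi_\theta \text{ is not injective on } F_c\}$ contains a non-empty interval, and $\mathcal{H}^{s}(\pi_\theta F_c) = 0$ for almost every $\theta \in \mathcal{IP}$ (their Theorem 1.2(i) and Example 2.8). Note the two crucial features your sketch misses: the relevant condition is mere non-injectivity of $\pi_\theta$ on $F_c$ (two points of $F_c$ sharing a projection, with the witnessing pair varying with $\theta$), not overlap of projected cylinder maps; and even then the measure vanishes only \emph{almost everywhere} on the interval, via a transversality argument --- which is also why the theorem's conclusion on $J$ is stated almost surely rather than everywhere. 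As written, your proposal proves the $I$ half and correctly reduces the $J$ half, but the substance of the $J$ half is missing and the heuristic offered in its place is not repairable in the form stated; you should instead invoke (or reprove) the Peres--Simon--Solomyak result.
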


The restriction to $s > \log_5 3 \approx 0.6826$ in Theorem \ref{mainex} may well be an artefact of our method, but at present we are unaware how to construct lower-dimensional examples:
\begin{ques}
Given any $s \in [0,1)$, in particular $s \leq \log_{5} 3$, can one construct a compact planar set with Hausdorff dimension $s$, for which the Assouad dimension of the projections is not almost surely constant?
\end{ques}

Also, we do not know if the words `almost all $\theta \in J$' could be strengthened to `all $\theta \in J$' with a different construction:
\begin{ques}
Can one construct a compact planar set for which the Assouad dimension of the projection takes different values on two sets with non-empty interior?
\end{ques}

Note that Theorem \ref{mainex} demonstrates the apparently strange property that Assouad dimension can increase under projection (a Lipschitz map), which cannot happen for the dimensions discussed in Section 1.  This peculiarity of the Assouad dimension was observed previously in \cite[Section 3.1]{Fraser}.

The rest of this section will be dedicated to constructing an example with the properties required by Theorem \ref{mainex}. The set $F = F_c$ will actually be very simple: it will be a self-similar modification of the Sierpi\'nski triangle, where the contraction ratios are equal to $c \in (1/5,1/3)$, see Figure 1.  Fix $c \in (1/5,1/3)$ and let $F_c$ be the self-similar attractor of the IFS on $[0,1]^2$ given by $\{x \mapsto cx,\  x \mapsto cx+(0,1-c), \ x \mapsto cx+(1-c, 0)\}$.  Observe that  $F_c$ satisfies the \emph{open set condition} (OSC) and so $\dim_\text{H} F_c = \dim_\text{A} F_c =-\log 3/\log c  =: s \in (\log_5 3 , 1)$.  It follows from Theorem \ref{assouadprojection} that
\begin{enumerate}
\item  $\dim_\text{A} \pi_\theta F_c  =\dim_\text{H} \pi_\theta F_c  $  if and only if  $\mathcal{H}^{\dim_\text{H} \pi_\theta F_c}(\dim_\text{H} \pi_\theta F_c) >0 $
\item  $\dim_\text{A} \pi_\theta F_c  = 1 $ if and only if $\mathcal{H}^{\dim_\text{H} \pi_\theta F_c}(\dim_\text{H} \pi_\theta F_c) = 0 $.
\end{enumerate}
 In light of this dichotomy, and the fact that $\dim_\text{H} \pi_\theta F_c = s<1$ almost surely by Marstrand's Theorem, in order to complete the proof of Theorem \ref{mainex} it is sufficient to show that there is a non-empty open interval of $\theta$'s for which $ \mathcal{H}^s(\pi_\theta F_c) >0$, and a non-empty open interval of $\theta$'s within which $ \mathcal{H}^s(\pi_\theta F_c) =0$ almost surely.  The first of these tasks is straightforward, because one can easily find an open interval of $\theta$'s for which $\pi_\theta F_c$ is a self-similar set satisfying the open set condition and it then follows from standard results that $\dim_\H \pi_\theta F_c = s$ and $ \mathcal{H}^s(\pi_\theta F_c) >0$, see \cite[Chapter 9]{Falconer}.  The existence of such an interval relies on the assumption $c< 1/3$ since (after rescaling) the problem reduces to positioning three pairwise disjoint intervals of length $c$ inside the unit interval.

The second task is more delicate, but fortunately has already been solved by Peres, Simon and Solomyak \cite{PeresSimonSolomyak}.  They defined the set of \emph{intersection parameters} $\mathcal{IP} = \{ \theta : \pi_\theta \text{ is not injective on } F_c \}$ and proved that $\mathcal{IP}$ contains a non-empty interval provided $c \in (1/5,1/3)$ and, moreover, for almost every $\theta \in \mathcal{IP}$ we have $\mathcal{H}^s(\pi_\theta F_c) = 0$.  This can be found in \cite[Theorem 1.2(i) and Example 2.8]{PeresSimonSolomyak}.  We could also have used the 4-corner Cantor set with contraction parameter in the interval $(1/6,1/4)$, which was discussed in \cite{PeresSimonSolomyak}, but chose the Sierpi\'nski triangle because it yielded the least restrictive conditions on the dimension of $F=F_c$.

\begin{figure}[H] 
	\centering
	\includegraphics[width=110mm]{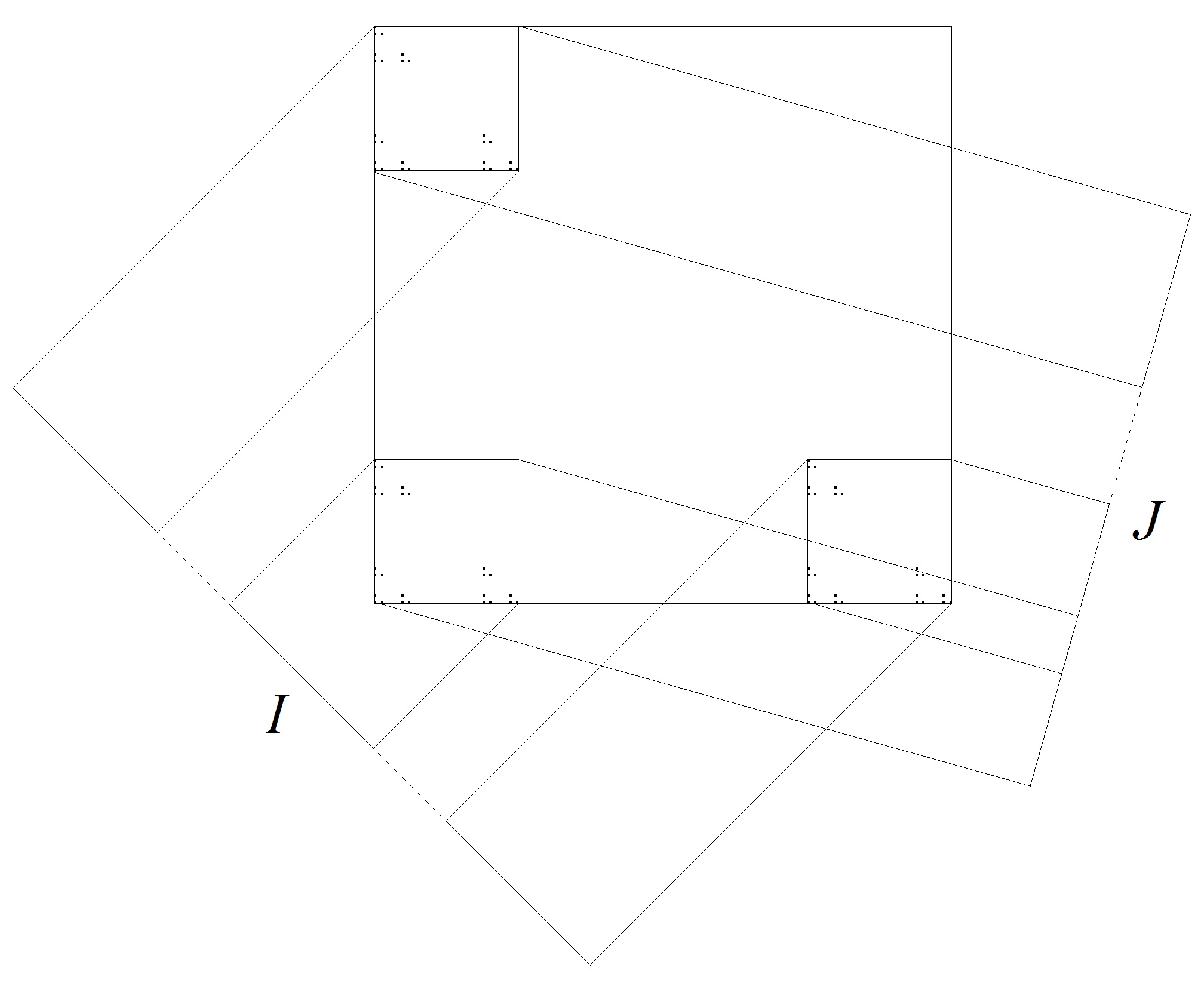}
\caption{The set $F_c$ (with $c = 1/4$) and two typical projections.  There is a small interval of projections $I$ for which the 3 pieces of $F_c$ project into pairwise disjoint intervals, meaning that projections in these directions satisfy the open set condition. There is also an interval $J$ within which the projection map is not injective. }
\end{figure}

\subsection{There is no direct counterpart to Falconer's Theorem for Assouad dimension} \label{nofalconer}

In 1988 Falconer proved a seminal result in the dimension theory of self-affine sets, see \cite{affine}.  Self-affine sets are closely related to self-similar sets, but the contractions in the defining IFS can be affine, i.e. $c_i O_i$ can be replaced with any contracting invertible $d \times d$ matrix.  This means that the scaled copies of the attractor can scale by different amounts in different directions, as well as being skewed or sheared, which makes them much more difficult to study.  Even if the OSC is satisfied, the dimensions may be hard to compute and the Assouad, box and Hausdorff dimensions may all be distinct.  Despite this, Falconer proved that the Hausdorff and box dimensions are generically equal to the \emph{affinity dimension}; the self-affine analogue of the similarity dimension.  Here `generically' means almost surely upon randomising the defining set of translations.  The affinity dimension depends only on the linear parts of the defining maps (as with the similarity dimension) and Falconer proved that for Lebesgue almost all choices of translation vectors $\{t_i\}_{i \in \mathcal{I}}$, the box and Hausdorff dimensions of the corresponding self-affine set are equal to the affinity dimension, provided the spectral norms of the matrices were all strictly less than 1/2.  In fact, Falconer's original proof required 1/3 here, but Solomyak relaxed this assumption to 1/2 and pointed out that this was optimal \cite{solomyak}.   Also the affinity dimension is always an upper bound for the Hausdorff and box dimensions, but this is not true for the Assouad dimension \cite{Mackay, Fraser}.  Specialising to the case of self-similar subsets of the line (which is a very restrictive class of self-affine sets), the assumption on norms is not required.  In particular, we have the following result due to Simon and Solomyak \cite{simonsolomyak}:

\begin{thm} \label{SSfalconer}
Fix a set $\{c_i\}_{i \in \mathcal{I}}$ with each $c_i \in (-1,1) \setminus \{0\}$ and let $s$ be the corresponding similarity (affinity) dimension given by
\[
\sum_{i \in \mathcal{I}} \lvert c_i \rvert^s = 1
\]
For a given set of translations $\textbf{t} = \{t_i\}_{i \in \mathcal{I}}$ with $t_i \in \mathbb{R}$, let $F_\textbf{t}$ denote the self-similar attractor of the IFS $\{x \mapsto c_i x + t_i\}_{i \in \mathcal{I}}$.  Then for Lebesgue almost all $\textbf{t}\in \mathbb{R}^{\lvert \mathcal{I} \rvert}$, one has
\[
\dim_\text{\emph{H}} F_\textbf{t} =  \dim_\text{\emph{B}} F_\textbf{t} =  \min\{1,s\}.
\] 
\end{thm}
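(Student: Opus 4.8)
The proof naturally splits into an easy universal upper bound and a delicate almost-sure lower bound. For the upper bound, observe that for every $\mathbf t$ the attractor $F_{\mathbf t}$ lies in a fixed bounded interval and satisfies $F_{\mathbf t}=\bigcup_{i\in\mathcal I}S_i(F_{\mathbf t})$; covering $F_{\mathbf t}$ by the images $S_{i_1}\cdots S_{i_n}(F_{\mathbf t})$ of the cylinders whose contraction products are comparable to a given scale $r$ and counting, together with the defining relation $\sum_i|c_i|^s=1$, yields $\overline{\dim}_{\mathrm B}F_{\mathbf t}\le s$; since $F_{\mathbf t}\subseteq\mathbb R$ we also have $\overline{\dim}_{\mathrm B}F_{\mathbf t}\le 1$. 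Hence $\dim_{\mathrm H}F_{\mathbf t}\le\dim_{\mathrm B}F_{\mathbf t}\le\min\{1,s\}$ for \emph{all} $\mathbf t$, and it remains to prove the matching lower bound for $\dim_{\mathrm H}$ for almost every $\mathbf t$.

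For the lower bound I would use the potential-theoretic transversality method of Peres--Schlag type. Code $F_{\mathbf t}$ by $\Sigma=\mathcal I^{\mathbb N}$ through the projection $\Pi_{\mathbf t}(\mathbf i)=\sum_{n\ge1}\big(\prod_{k<n}c_{i_k}\big)t_{i_n}$; the decisive structural feature here is that $\Pi_{\mathbf t}$ is an \emph{affine} function of the parameter $\mathbf t\in\mathbb R^{|\mathcal I|}$. Let $\nu$ be the Bernoulli measure on $\Sigma$ with weights $p_i=|c_i|^s$ (a probability vector exactly because $\sum_i|c_i|^s=1$), and set $\mu_{\mathbf t}=(\Pi_{\mathbf t})_*\nu$, a probability measure with support $F_{\mathbf t}$. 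Fix $t<\min\{1,s\}$. By the energy characterisation of Hausdorff dimension it suffices to show $I_t(\mu_{\mathbf t})=\iint|x-y|^{-t}\,d\mu_{\mathbf t}(x)\,d\mu_{\mathbf t}(y)<\infty$ for a.e.\ $\mathbf t$, and I would prove the stronger integrated bound $\int_B I_t(\mu_{\mathbf t})\,d\mathbf t<\infty$ over every box $B=[-M,M]^{|\mathcal I|}$, which by Tonelli forces $I_t(\mu_{\mathbf t})<\infty$ for a.e.\ $\mathbf t\in B$.

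Pushing the energy back to symbol space, writing $g_{\mathbf i,\mathbf j}(\mathbf t)=\Pi_{\mathbf t}(\mathbf i)-\Pi_{\mathbf t}(\mathbf j)$ (again affine in $\mathbf t$), and organising the double integral according to the first index $N$ at which $\mathbf i,\mathbf j$ differ, one factors out the common prefix: $g_{\mathbf i,\mathbf j}=\big(\prod_{k<N}c_{i_k}\big)\,g_{\sigma^{N-1}\mathbf i,\,\sigma^{N-1}\mathbf j}$, where the shifted sequences differ already in their first symbol. Using the self-similarity of $\nu$ (a prefix of length $N-1$ carries $\nu$-mass $\big(\prod_{k<N}|c_{i_k}|\big)^s$ and the shifted tails are again $\nu$-distributed), the sum over $N$ collapses to
\[
\int_B I_t(\mu_{\mathbf t})\,d\mathbf t=K_t\sum_{N\ge1}\Big(\sum_{i\in\mathcal I}|c_i|^{2s-t}\Big)^{N-1},\qquad K_t:=\iint_{i_1\ne j_1}\int_B|g_{\mathbf i,\mathbf j}(\mathbf t)|^{-t}\,d\mathbf t\,d\nu(\mathbf i)\,d\nu(\mathbf j).
\]
The geometric series converges precisely because $t<s$: the exponent $2s-t$ exceeds $s$, and since $a\mapsto\sum_i|c_i|^a$ is strictly decreasing with value $1$ at $a=s$ we get $\sum_i|c_i|^{2s-t}<1$. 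This is exactly where the spectral gap $t<s$ is spent, and it is what lets me conclude $\dim_{\mathrm H}F_{\mathbf t}\ge t$ for all $t<\min\{1,s\}$ (yielding $\ge s$ when $s\le1$ and $\ge1$ when $s>1$) after letting $t\uparrow\min\{1,s\}$ along a sequence and taking a countable union over $t$ and over the boxes $B$.

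The crux, and the step I expect to be the main obstacle, is the transversality estimate $K_t<\infty$ for $t<1$, i.e.\ the control of pairs differing in the first symbol. Since $g_{\mathbf i,\mathbf j}$ is affine with gradient $v_{\mathbf i,\mathbf j}=\nabla_{\mathbf t}g_{\mathbf i,\mathbf j}$, slicing one-dimensionally in the direction $v_{\mathbf i,\mathbf j}/|v_{\mathbf i,\mathbf j}|$ gives the sublevel-set bound $\int_B|g_{\mathbf i,\mathbf j}|^{-t}\,d\mathbf t\lesssim_{B,t}|v_{\mathbf i,\mathbf j}|^{-t}$ for $t<1$, so everything reduces to the integrability $\iint_{i_1\ne j_1}|v_{\mathbf i,\mathbf j}|^{-t}\,d\nu\,d\nu<\infty$. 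The heart of the matter is that $v_{\mathbf i,\mathbf j}$ cannot be small too often: its $t_{i_1}$- and $t_{j_1}$-components each carry a leading $\pm1$ from the $n=1$ term of $\Pi_{\mathbf t}$, and one must show this domination survives the lower-order recurrences of the symbols $i_1,j_1$. The genuinely delicate point is the presence of parameter-independent \emph{exact overlaps}, where $v_{\mathbf i,\mathbf j}=0$ and $g_{\mathbf i,\mathbf j}\equiv0$ — as happens for the addresses $12^\infty$ and $21^\infty$ when $c_1=c_2=1/2$; one must verify that the set of such resonant pairs is $\nu\times\nu$-null and, quantitatively, that near-resonances contribute only a finite amount to $K_t$. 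It is exactly this analysis that can be carried out for self-similar subsets of the line with \emph{no} restriction on the sizes of the $c_i$ — in contrast to the self-affine case, where the spectral-norm bound $<1/2$ is needed and optimal — and this is the content of the Simon--Solomyak argument.
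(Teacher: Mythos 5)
First, a point of orientation: the paper does not prove Theorem \ref{SSfalconer} at all --- it is quoted verbatim from Simon and Solomyak \cite{simonsolomyak} --- so there is no in-paper argument to compare against, and your proposal must stand on its own. Its skeleton is the standard and correct Falconer/Peres--Solomyak machinery: the universal upper bound is fine; the energy method with the Bernoulli measure $p_i = |c_i|^{s}$, the Fubini argument over a box $B$, the decomposition at the first differing index with the factor $p_{\mathbf{u}}^{2}|c_{\mathbf{u}}|^{-t} = |c_{\mathbf{u}}|^{2s-t}$ and the geometric series from $t<s$, and the one-dimensional slicing bound $\int_{B}|g_{\mathbf{i},\mathbf{j}}|^{-t}\,d\mathbf{t} \lesssim_{B,t} |v_{\mathbf{i},\mathbf{j}}|^{-t}$ for $t<1$ are all correct. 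Everything is thus reduced, as you say, to $K_{t}<\infty$, i.e.\ to $\iint_{i_{1}\neq j_{1}}|v_{\mathbf{i},\mathbf{j}}|^{-t}\,d\nu\,d\nu<\infty$ for every $t<\min\{1,s\}$.

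That reduction is where the genuine gap lies, and it is not merely an omitted technicality: the global estimate you defer to is very likely \emph{false} for some admissible fixed $\{c_{i}\}$, so no amount of ``verifying near-resonances'' can complete the proof along this route. If $\max_{i}|c_{i}|<1/2$, the leading $\pm 1$ in the relevant coordinate of $v_{\mathbf{i},\mathbf{j}}$ dominates the tail (since $\sum_{n\geq 2}|c|^{n-1}<1$), $|v|$ is bounded below, and your argument closes --- but that is exactly Falconer's theorem with Solomyak's $1/2$ bound, whereas the whole content of the quoted theorem is the removal of that restriction. For $|c_{i}|\geq 1/2$ the domination mechanism fails outright. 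Concretely, take two maps with $c_{1}=c_{2}=c=(\sqrt{5}-1)/2$, so that $c+c^{2}=1$ and $s>1$: then for \emph{every} common tail $w$,
\begin{displaymath}
\Pi_{\mathbf{t}}(122w)-\Pi_{\mathbf{t}}(211w) \;=\; (t_{1}-t_{2})\bigl(1-c-c^{2}\bigr) \;=\; 0 \quad \text{identically in } \mathbf{t},
\end{displaymath}
so $g_{\mathbf{i},\mathbf{j}}\equiv 0$ on a self-similarly structured (null, but rich) family of pairs, not just at the isolated pair in your $c=1/2$ example. The law of $|v_{\mathbf{i},\mathbf{j}}|$ under $\nu\times\nu$ is then a Bernoulli-convolution-type measure at a Pisot parameter; the exponentially many $\{-1,0,1\}$-representations of $0$ arising from the relation $c+c^{2}=1$ concentrate mass near the resonance, and there is no reason --- and you offer none --- why $\nu\times\nu\{|v|<\epsilon\}\lesssim\epsilon^{t'}$ with $t'>t$ should hold for all $t<1$. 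This is precisely why the actual Simon--Solomyak proof is \emph{not} a global transversality bound of your form: their contribution is a device for circumventing the failure of transversality when contraction ratios are large (working with subsystems and regions of parameter space where such degeneracies are excluded, at the cost of an $\epsilon$ of dimension that is then removed by a limiting argument). As written, your proposal proves the theorem only under the extra hypothesis $\max_{i}|c_{i}|<1/2$, and the closing appeal to ``the Simon--Solomyak argument'' attributes to them a statement ($K_{t}<\infty$ for arbitrary ratios) that their paper neither proves nor uses.
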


It is natural to ask if such a theorem exists for the Assouad dimension, for example, is the Assouad dimension of a self-affine or self-similar set almost surely constant upon randomising the translations in the above manner.  We point out that the answer to this question is no, once again due to the example of Peres, Simon and Solomyak \cite{PeresSimonSolomyak} discussed in Section \ref{nomarstrand}.

\begin{thm} \label{mainex2}
Fix $c \in (1/5,1/3)$ and let $s = -\log 3/\log c <1$.  For a given set of translations $\textbf{t} = (t_1, t_2, t_3) \in \mathbb{R}^3$ let $F_\textbf{t}$ denote the self-similar attractor of the IFS $\{x \mapsto c x + t_i\}_{i=1}^3$.  Then there exists two non-empty disjoint open sets $U, V \subseteq \mathbb{R}^3$ such that
\begin{align*}
&\dim_\text{\emph{A}} F_\textbf{t}  = s, \qquad \text{for all $\textbf{t} \in U$}\\
&\dim_\text{\emph{A}} F_\textbf{t}  = 1, \qquad \text{for almost all $\textbf{t} \in V$}.
\end{align*}
In particular, $\textbf{t} \mapsto \dim_\text{\emph{A}} F_\textbf{t}$ is not an almost surely constant function.
\end{thm}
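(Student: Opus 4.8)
The plan is to deduce Theorem \ref{mainex2} directly from the projection example of Section \ref{nomarstrand}, by recognising that each line IFS $\{x \mapsto cx + t_i\}_{i=1}^3$ arises as a projection of the planar set $F_c$, and that the Assouad dimension of the attractor is invariant under a two-parameter family of deformations of the translation vector $\mathbf{t}$. First I would record the two elementary observations that drive everything. Writing $v_1 = (0,0)$, $v_2 = (0,1-c)$, $v_3 = (1-c,0)$ for the translations defining $F_c$ (the maps have no rotational part), linearity of $\pi_\theta$ gives $\pi_\theta(cp + v_i) = c\,\pi_\theta(p) + \pi_\theta(v_i)$, so upon identifying $l_\theta$ with $\mathbb{R}$ via the coordinate $\pi_\theta$ we have $\pi_\theta F_c = F_{\mathbf{t}(\theta)}$ with
\[
\mathbf{t}(\theta) = \big(0,\ (1-c)\sin\theta,\ (1-c)\cos\theta\big).
\]
Secondly, conjugating the IFS $\{x \mapsto cx + t_i\}$ by any similarity $h(x) = ax+b$ ($a \neq 0$) yields the IFS $\{x \mapsto cx + (a t_i + b(1-c))\}$, whose attractor is $h(F_{\mathbf{t}})$; since $h$ is bi-Lipschitz and the Assouad dimension is a bi-Lipschitz invariant, this shows
\[
\dim_{\mathrm{A}} F_{a\mathbf{t} + \lambda\mathbf{1}} = \dim_{\mathrm{A}} F_{\mathbf{t}} \qquad (a \neq 0,\ \lambda \in \mathbb{R}),
\]
where $\mathbf{1} = (1,1,1)$ and $\lambda = b(1-c)$ ranges over all of $\mathbb{R}$. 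Thus $\dim_{\mathrm{A}} F_{\mathbf{t}}$ is constant along the orbits of the action $\mathbf{t} \mapsto a\mathbf{t} + \lambda\mathbf{1}$.

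Next I would thicken the one-parameter curve $\theta \mapsto \mathbf{t}(\theta)$ into open subsets of $\mathbb{R}^3$ using these orbits. Consider
\[
\Phi(\theta, a, \lambda) = a\,\mathbf{t}(\theta) + \lambda\mathbf{1} = \big(\lambda,\ a(1-c)\sin\theta + \lambda,\ a(1-c)\cos\theta + \lambda\big).
\]
A direct computation gives Jacobian determinant $a(1-c)^2$, which is nonzero whenever $a \neq 0$. Hence $\Phi$ is a local diffeomorphism on $\{a \neq 0\}$; in particular it is an open map and it carries Lebesgue-null sets to Lebesgue-null sets. Fix nonempty open intervals $A \subset (0,\infty)$ and $L \subset \mathbb{R}$, and let $I, J \subseteq [0,\pi)$ be the two open intervals produced in Section \ref{nomarstrand}: on $I$ the projection satisfies the OSC so that $\dim_{\mathrm{A}} \pi_\theta F_c = s$, while for almost every $\theta \in J$ one has $\mathcal{H}^s(\pi_\theta F_c) = 0$ together with $\dim_{\mathrm{H}} \pi_\theta F_c = s$, whence $\dim_{\mathrm{A}} \pi_\theta F_c = 1$ by Theorem \ref{assouadprojection}.

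I would then set $U = \Phi(I \times A \times L)$ and $V = \Phi(J \times A \times L)$, both open by the previous paragraph. For any $\mathbf{t} = \Phi(\theta, a, \lambda) \in U$ the orbit-invariance gives $\dim_{\mathrm{A}} F_{\mathbf{t}} = \dim_{\mathrm{A}} F_{\mathbf{t}(\theta)} = \dim_{\mathrm{A}} \pi_\theta F_c = s$, so $\dim_{\mathrm{A}} F_{\mathbf{t}} = s$ for every $\mathbf{t} \in U$. For $V$, let $E \subseteq J$ be the full-measure set of $\theta$ with $\dim_{\mathrm{A}} \pi_\theta F_c = 1$; then $\dim_{\mathrm{A}} F_{\mathbf{t}} = 1$ for every $\mathbf{t} \in \Phi(E \times A \times L)$, whose complement in $V$ is contained in $\Phi\big((J \setminus E) \times A \times L\big)$. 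Since $J \setminus E$ is $1$-dimensionally null, the product $(J \setminus E) \times A \times L$ is $3$-dimensionally null, and because $\Phi$ preserves null sets this exceptional set has Lebesgue measure zero; hence $\dim_{\mathrm{A}} F_{\mathbf{t}} = 1$ for almost every $\mathbf{t} \in V$. Disjointness of $U$ and $V$ is then automatic: if they met, their intersection would be open and of positive measure, yet would force $\dim_{\mathrm{A}} F_{\mathbf{t}} = s$ everywhere on it and $\dim_{\mathrm{A}} F_{\mathbf{t}} = 1$ almost everywhere on it, contradicting $s < 1$.

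The computations (the conjugation identity and the Jacobian) are routine; the step I expect to be the crux is the transfer of the ``almost every $\theta$'' statement on $J$ into an ``almost every $\mathbf{t}$'' statement on $V$. This is precisely where the non-vanishing Jacobian is indispensable: it guarantees both that the thickened sets are genuinely open and that $\Phi$ sends the $1$-dimensional null set of bad parameters to a $3$-dimensional null set. In effect the entire analytic content of the theorem is already contained in the Peres--Simon--Solomyak analysis underpinning Section \ref{nomarstrand}, and the remaining work is the essentially formal, but measure-theoretically delicate, reparametrisation.
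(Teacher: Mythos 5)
Your proposal is correct, and it follows the same overall strategy as the paper -- reduce to the one-parameter family of projections of the modified Sierpi\'nski triangle from Section \ref{nomarstrand} and transfer the almost-every statement to three parameters via a smooth reparametrisation plus absolute continuity -- but the implementation genuinely differs in both halves. For $V$, the paper works ``downwards'': it collapses $\mathbb{R}^3$ onto the normalised parameter via $\Xi(\mathbf{t}) = (t_2 - t_1)/(t_3 - t_1)$, sets $V = \Xi^{-1}(J)$ (so openness and saturation under the affine conjugation action come for free, since $V$ is a full preimage), and uses $\mathcal{L}^3 \circ \Xi^{-1} \ll \mathcal{L}^1$; you instead work ``upwards'', thickening the curve $\theta \mapsto \mathbf{t}(\theta)$ by the orbit map $\Phi(\theta,a,\lambda) = a\,\mathbf{t}(\theta) + \lambda\mathbf{1}$, with openness from the non-vanishing Jacobian and nullity of the exceptional set from the fact that locally Lipschitz maps between spaces of equal dimension preserve null sets. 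Your version requires the extra (correctly handled) care about multiple $\Phi$-preimages of a point of $V$, which the preimage formulation avoids. For $U$, the paper does not use projections at all: it picks $\mathbf{t}'$ satisfying the OSC directly in $\mathbb{R}^3$ and observes the OSC persists on a neighbourhood, whereas you obtain $U$ by thickening the interval $I$ of OSC-directions; both are fine. A final difference in ingredients: the paper certifies $\dim_{\mathrm{A}} F_{\mathbf{t}} = 1$ a.e.\ on $V$ by combining Theorem \ref{SSfalconer} (to get $\dim_{\mathrm{H}} F_{\mathbf{t}} = s$ a.e.) with the line dichotomy of Theorem \ref{GDassouad} (equivalently \cite[Theorem 1.3]{Fraseretal} and \cite[Corollary 3.2]{FarkasFraser}), while you import the same conclusion already packaged at the level of projections via Theorem \ref{assouadprojection} together with Marstrand's theorem along the curve $\mathbf{t}(\theta)$; since the discrete-rotations case of Theorem \ref{assouadprojection} is itself proved through Theorem \ref{GDassouad}, the logical content is the same, but your route lets you quote one black box instead of two.
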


Similar to Section \ref{nomarstrand}, the open set $U$ is easy to find: choose $\textbf{t}' \in \mathbb{R}^3$ such that the OSC is satisfied (which can be done since $c<1/3$).  Then observe that the OSC is still satisfied for all $\textbf{t}$ in some open neighbourhood of $\textbf{t}'$ in $\mathbb{R}^3$.

Again, the second set $V$ is more subtle, but dealt with by Simon and Solomyak \cite{simonsolomyak} after re-parameterisation. Observe that for all $\textbf{t} \in \mathbb{R}^3$, the associated self-similar set is equal to the self-similar set associated to $(0,\lambda,1)$ for some $\lambda \in [0,1]$, after appropriate translating and re-scaling.  More precisely, consider the open subset of $\mathbb{R}^3$ given by
\[
X = \{\textbf{t} \, = \,  (t_1,t_2,t_3) \in \mathbb{R}^3 \, : \, 0 < t_1 <t_2<t_3<1\}
\]
and the map $\Xi: X \to (0,1)$ defined by
\[
 \Xi (t_1,t_2,t_3) \  = \  \frac{t_2-t_1}{t_3-t_1}  \ \in \  (0,1)
  \]
 which is easily seen to be a continuous surjection satisfying
\begin{equation} \label{abscont}
  \mathcal{L}^3 \circ \Xi^{-1}  \ll    \mathcal{L}^1,
\end{equation}
where $\mathcal{L}^3$ is $3$-dimensional Lebesgue measure restricted to $X$ and $\mathcal{L}^1$ is $1$-dimensional Lebesgue measure restricted to $(0,1)$. The attractor $F_\textbf{t}$ is affinely equivalent to the attractor corresponding to $(0,\Xi(\textbf{t}),1)$. Here, the affine rescaling is translation by $-t_1$ followed by rescaling by $(t_3+c)^{-1}$.  Moreover, the sets $F_\lambda$ for $\lambda \in (0,1)$ are just a smooth bijective reparamerisation of the sets $\pi_\theta F$ for $\theta \in (0,\pi/2)$ where $F$ is the modification of the Sierpi\'nski triangle from the previous section.  It follows by the example of Peres, Simon and Solomyak \cite{simonsolomyak} that there exists a non-empty open interval $J \subseteq (0,1)$ such that for almost all $\lambda \in J$, $\mathcal{H}^s(F_\lambda) = 0$.  It follows that the set $V = \Xi^{-1}(J) \subset \mathbb{R}^3$ is open and, by (\ref{abscont}), for Lebesgue almost all $\textbf{t} \in V$ we have $\mathcal{H}^s(F_\textbf{t}) = 0$.  Moreover, Theorem \ref{SSfalconer} implies that for  Lebesgue almost all $\textbf{t} \in V$ we have $\dim_\H F_\textbf{t} = s$ and so by \cite[Theorems 1.3]{Fraseretal} and \cite[Corollary 3.2]{FarkasFraser}, for  Lebesgue almost all $\textbf{t} \in V$, we have $\dim_\text{A} F_\textbf{t} = 1$.  This final implication also follows from our Theorem \ref{GDassouad}, stated in Section \ref{GDsection}.

\subsection{Higher dimensions and self-affine sets?}\label{selfAffineSets}

The higher dimensional variants of our results remain mostly open. In the dense rotations case, it is quite simple to show that the Assouad dimension is constant, regardless of the ambient dimension:
\begin{thm} \label{assouadconstant}
Let $F\subseteq [0,1]^d$ be self-similar,  suppose that the group generated by $\{O_i\}_{i \in \mathcal{I}}$  is dense in $O(d)$ or $SO(d)$ and fix $k \in \mathbb{N}$ less than $d$.  Then $\dim_\text{\emph{A}}  \pi F$ takes the same value for \textbf{all} $\pi \in G_{d,k}$. Recall that $G_{d,k}$ is the family of all orthogonal projections onto $k$-dimensional subspaces of $\mathbb{R}^{d}$.
\end{thm}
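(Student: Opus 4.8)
The plan is to fix the two projections, identified with their image subspaces $V,W\in G_{d,k}$, and to prove $\dim_{\mathrm A}\pi_V F\ge\dim_{\mathrm A}\pi_W F$; the reverse inequality, and hence equality, then follows by symmetry. First I would record the self-similar structure at the level of words. For $\mathbf i=(i_1,\dots,i_n)\in\mathcal I^n$ write $S_{\mathbf i}=S_{i_1}\circ\cdots\circ S_{i_n}$, so that $S_{\mathbf i}(x)=c_{\mathbf i}O_{\mathbf i}(x)+t_{\mathbf i}$ with $c_{\mathbf i}=c_{i_1}\cdots c_{i_n}$ and $O_{\mathbf i}=O_{i_1}\cdots O_{i_n}$, and $F\supseteq S_{\mathbf i}(F)$. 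The elementary identity
\[
\pi_V\circ O=(O|_{O^{-1}V})\circ\pi_{O^{-1}V},\qquad O\in O(d),
\]
valid because an orthogonal $O$ maps $O^{-1}V$ isometrically onto $V$ and $(O^{-1}V)^\perp$ onto $V^\perp$, then gives
\[
\pi_V\big(S_{\mathbf i}(F)\big)=c_{\mathbf i}\,O_{\mathbf i}\,\pi_{U_{\mathbf i}}(F)+\pi_V t_{\mathbf i},\qquad U_{\mathbf i}:=O_{\mathbf i}^{-1}V.
\]
Thus $\pi_V F$ contains a similar copy, of ratio $c_{\mathbf i}$, of the projection $\pi_{U_{\mathbf i}}F$, for every word $\mathbf i$.

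Next I would argue that the directions $U_{\mathbf i}$ available in this way are dense in $G_{d,k}$. The maps $O_{\mathbf i}$ form the sub-semigroup $\Sigma$ generated by $\{O_i\}_{i\in\mathcal I}$ inside the compact group $\overline{\langle O_i\rangle}\in\{O(d),SO(d)\}$; since a closed sub-semigroup of a compact group is a subgroup, $\overline{\Sigma}$ equals the whole of $O(d)$ or $SO(d)$, i.e. $\Sigma$ is dense. As $O(d)$, and also $SO(d)$ for $k<d$ (this is where the hypothesis $k<d$ enters: one corrects a determinant by a reflection in the orthogonal complement of the image, which is at least one-dimensional), acts transitively on $G_{d,k}$, the orbit $\{U_{\mathbf i}\}$ is dense in $G_{d,k}$. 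In particular there are words $\mathbf i$ with $U_{\mathbf i}$ arbitrarily close to $W$.

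Finally comes the transport step. Fix $s<\dim_{\mathrm A}\pi_W F$. By definition of Assouad dimension there are, for every constant $C$, scales $0<\rho<\sigma$ and a point $y$ with $N_\rho(B(y,\sigma)\cap\pi_W F)>C(\sigma/\rho)^s$. Because $F$ is bounded, the assignment $U\mapsto\pi_U F$ is continuous in the Hausdorff metric, so I choose a word with $U_{\mathbf i}$ close enough to $W$ that $\pi_{U_{\mathbf i}}F$ lies within Hausdorff distance $<\rho/4$ of $\pi_W F$; a maximal $\rho$-separated subset of $B(y,\sigma)\cap\pi_W F$ then perturbs, point by point, to an equinumerous $(\rho/2)$-separated subset of $B(y',2\sigma)\cap\pi_{U_{\mathbf i}}F$, yielding a comparable covering lower bound for $\pi_{U_{\mathbf i}}F$ at the scales $(\rho/2,\,2\sigma)$. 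Applying the similarity $\Phi(z)=c_{\mathbf i}O_{\mathbf i}z+\pi_V t_{\mathbf i}$ of the second display (covering numbers are similarity-invariant, and $B(x,R)\cap\Phi(\pi_{U_{\mathbf i}}F)\subseteq B(x,R)\cap\pi_V F$, so covering numbers only increase) transplants this configuration into $\pi_V F$ at the scales $(c_{\mathbf i}\rho/2,\,2c_{\mathbf i}\sigma)$, whose ratio is unchanged. Since $C$ was arbitrary, this shows $\dim_{\mathrm A}\pi_V F\ge s$, and letting $s\uparrow\dim_{\mathrm A}\pi_W F$ completes the inequality.

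I expect the main obstacle to be precisely this transport step: the change of projection direction is only approximate, so one must verify that it does not destroy the covering configuration, which is exactly why the comparison is carried out at scales much larger than the Hausdorff error $\rho/4$. (Equivalently, one could phrase the conclusion through weak tangents, realising an isometric copy of $\pi_W F$ as a weak tangent of $\pi_V F$ and invoking the standard fact that passing to a weak tangent cannot increase the Assouad dimension; the covering-number version above avoids that machinery.) The group-theoretic inputs — density of the composition semigroup and transitivity of the action on $G_{d,k}$, where $k<d$ is used — are what make the required approximation available.
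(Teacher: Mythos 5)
Your proposal is correct and takes essentially the same route as the paper: the identity $\pi_V(S_{\mathbf i}(F)) = c_{\mathbf i}\,O_{\mathbf i}\,\pi_{O_{\mathbf i}^{-1}V}(F)+\pi_V t_{\mathbf i}$, the density of the semigroup of word-rotations $O_{\mathbf i}$, and the continuity of $\pi \mapsto \pi F$ in the Hausdorff metric are exactly the ingredients of the paper's Lemma \ref{tangentsall}, which realises $\pi_2 F$ as a weak pseudo-tangent to $\pi_1 F$ by blowing up $\pi_1 S_{\mathbf i}(F)$ around the projected fixed point of $S_{\mathbf i}$. The only difference is presentational: where you carry out the covering-number transport by hand (with the $\rho/4$ Hausdorff-error bookkeeping), the paper invokes the weak pseudo-tangent monotonicity result \cite[Proposition 3.7]{Fraseretal} --- precisely the alternative you acknowledge in your closing parenthetical remark.
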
  

We will prove Theorem \ref{assouadconstant} in Section \ref{assouadconstantproof}. Viewing the proof of the planar case in Section \ref{denserotationsproof}, it seems likely that this constant is always as large as possible: 

\begin{ques}
Let $F\subseteq [0,1]^d$ be a self-similar set containing at least two points,  suppose that the group generated by $\{O_i\}_{i \in \mathcal{I}}$  is dense in $O(d)$ or $SO(d)$ and fix $k \in \mathbb{N}$ less than $d$.  Then is it true that $\dim_\text{\emph{A}}  \pi F = k$  for \textbf{all} $\pi \in G_{d,k}$?
\end{ques}

The `non-dense case' is more complicated in higher dimensions, as `non-dense' no longer implies `discrete'.  

\begin{ques}
Let $F\subseteq [0,1]^d$ be self-similar and fix $k \in \mathbb{N}$ less than $d$.  Then is it true that $\dim_\text{\emph{A}}  \pi F$ is almost surely equal to either $k$ or $\dim_\text{\emph{H}}  \pi F$?
\end{ques}

In the `discrete' case, in all ambient dimensions, the projections are still graph-directed self-similar sets, but in higher dimensions the Assouad dimension of such sets is more complicated and can take values other than $k$ or $\dim_\text{H}  \pi F$ for particular directions, see \cite[Section 4.1]{Fraseretal}.

The examples in Section \ref{nofalconer} show that there is no `Falconer's Theorem for Assouad dimension'.  In light of \cite[Theorem 1.3]{Fraseretal} we know that for self-similar sets in the line with random translations, the situation is still relatively simple.  In particular, there are at most two values which $\dim_\text{A} F_{\textbf{t}}$ can take for a set of $\textbf{t}$ with positive measure: the similarity dimension, or 1.  The situation in higher dimensions and for self-\emph{affine} sets is still unclear however.

\begin{ques}
For $2 \leq d \in \mathbb{N}$ and a finite set of non-singular contracting $d \times d$ matrices $\{A_i\}_{i \in \mathcal{I}}$, how many values can the Assouad dimension of the attractor of the IFS $\{A_i+t_i\}_{i \in \mathcal{I}}$ take, for a set of translates $\textbf{t} = \{t_i\}_{i \in \mathcal{I}} \in \mathbb{R}^{d \lvert \mathcal{I} \rvert} $ with positive measure?
\end{ques}

\section{Proof of Theorem \ref{mainTheoremForGeneralSets}: projections of general sets}\label{proofForGeneralSets}
If $A,B > 0$, we will use the notation $A \lesssim_{p} B$ to signify that there exists a constant $C \geq 1$ depending only on $p$ such that $A \leq CB$. If the constant $C$ is absolute, we write $A \lesssim B$. The two-sided inequality $A \lesssim_{p} B \lesssim_{p} A$ is abbreviated to $A \sim_{p} B$. An example of this notation is given by $x^{4} + x^{2} \sim x^{4}$ for $x \in \mathbb{R}$. 

Let $\varepsilon > 0$ and $A \geq 1$ be parameters to be specified later (the choice of $\varepsilon$ will eventually be determined by a counter assumption, claiming that Theorem \ref{mainTheoremForGeneralSets} fails, and $A$ will depend on this $\varepsilon$; for the time being, $A$ and $\varepsilon$ are just some constants). We use the following notion of \emph{$(\delta,s)$-sets}:
\begin{defn}\label{deltaOne} Let $0 \leq s \leq d$. A finite set $P \subset B(0,1) \subset \mathbb{R}^{d}$ is called a $(\delta,s)$-set with parameters $A$ and $\varepsilon$, if the points in $P$ are $\delta$-separated (that is, $|p - q| \geq \delta$ for distinct $p,q \in P$), and 
\begin{equation} \label{deltaOneSet} |P \cap B(x,r)| \leq A\delta^{-\varepsilon}\left(\frac{r}{\delta}\right)^{s}, \qquad x \in \mathbb{R}^{d}, \: r \geq \delta. \end{equation}
Here, and throughout Section \ref{proofForGeneralSets}, the notation $|\cdot|$ stands for cardinality. 
\end{defn}

The following estimate with $\varepsilon = 0$ is Proposition 4.10 in \cite{Orponen}. Since the proof is verbatim the same in the case $\varepsilon > 0$, we do not repeat the details here. 
\begin{prop}\label{discreteMarstrand} Let $P \subset \mathbb{R}^{2}$ be a $(\delta,1)$-set with $m \in \mathbb{N}$ points, let $0 < \tau < 1$, and let $E \subset [0,\pi)$ be a $\delta$-separated collection of vectors such that
\begin{displaymath} N_{\delta}(\pi_{\theta}(P)) \leq \delta^{\tau}m, \qquad \theta \in E. \end{displaymath}
Then $|E| \lesssim A\delta^{\tau - 1 - \varepsilon}\log(1/\delta)$.
\end{prop}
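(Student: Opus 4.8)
The plan is to prove the bound by a double-counting argument on the set of \emph{incident triples}
\[
\mathcal{T} = \{(\theta, p, q) \in E \times P \times P : p \neq q \text{ and } |\pi_{\theta}(p) - \pi_{\theta}(q)| \leq \delta\},
\]
estimating $|\mathcal{T}|$ from below using the compression hypothesis $N_{\delta}(\pi_{\theta}P) \leq \delta^{\tau}m$, and from above using the Frostman-type regularity built into the $(\delta,1)$-set $P$. This is a discrete incarnation of the classical $L^{2}$ (energy) proof of Marstrand's theorem, with the role of the energy integral played by the pairwise sum $\sum_{p \neq q} |p-q|^{-1}$.

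For the lower bound, I would fix $\theta \in E$ and cover $\pi_{\theta}(P)$ by at most $\delta^{\tau}m$ intervals of length $\delta$. Distributing the $m$ points of $P$ among these intervals and applying Cauchy--Schwarz, the number of ordered pairs $(p,q)$ landing in a common interval is at least $m^{2}/(\delta^{\tau}m) = m\delta^{-\tau}$. Discarding the at most $m$ diagonal pairs, which is harmless since $\delta^{-\tau} \geq 2$ for small $\delta$, shows that each $\theta \in E$ contributes $\gtrsim m\delta^{-\tau}$ triples, whence $|\mathcal{T}| \gtrsim |E|\, m\delta^{-\tau}$.

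For the upper bound, I would reverse the order of summation and fix a pair $p \neq q$ at distance $d = |p-q| \in [\delta, 2]$. Writing $\pi_{\theta}(p-q) = d\cos(\theta - \phi)$ for the appropriate angle $\phi$, the condition $|\pi_{\theta}(p-q)| \leq \delta$ confines $\theta$ to an arc of length $\lesssim \delta/d$ about $\phi + \pi/2$; since $E$ is $\delta$-separated, it meets this arc in $\lesssim 1 + 1/d$ points. Summing the constant term over all pairs contributes $\lesssim m^{2}$, while summing $1/d$ requires the regularity of $P$: decomposing the pairs dyadically by distance and invoking \eqref{deltaOneSet} in the form $|P \cap B(p,r)| \leq A\delta^{-\varepsilon}(r/\delta)$, each of the $\sim \log(1/\delta)$ dyadic annuli around a fixed $p$ contributes $\lesssim A\delta^{-1-\varepsilon}$, so that $\sum_{q \neq p} 1/|p-q| \lesssim A\delta^{-1-\varepsilon}\log(1/\delta)$ and hence $\sum_{p \neq q} 1/|p-q| \lesssim mA\delta^{-1-\varepsilon}\log(1/\delta)$. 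Finally, taking $r = 2$ in \eqref{deltaOneSet} yields $m \lesssim A\delta^{-1-\varepsilon}$, so the stray $m^{2}$ term is absorbed and $|\mathcal{T}| \lesssim mA\delta^{-1-\varepsilon}\log(1/\delta)$.

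Comparing the two estimates and cancelling the common factor $m\delta^{-\tau}$ gives $|E| \lesssim A\delta^{\tau - 1 - \varepsilon}\log(1/\delta)$, as desired. The main obstacle is the upper bound on the pairwise sum $\sum_{p \neq q} 1/|p-q|$: this is precisely the step that uses the full strength of the $(\delta,1)$-set hypothesis, and it is where both the factor $A\delta^{-1-\varepsilon}$ and the logarithmic loss $\log(1/\delta)$ are generated. The geometric count of collision directions per pair is routine, but it must be combined correctly with the $\delta$-separation of $E$; the dyadic summation over the $\sim \log(1/\delta)$ scales between $\delta$ and the diameter is the delicate bookkeeping that produces the claimed exponent.
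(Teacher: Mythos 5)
Your proof is correct, and it is essentially the argument behind the result the paper invokes: the paper does not prove Proposition \ref{discreteMarstrand} in-house but cites Proposition 4.10 of \cite{Orponen}, whose proof is precisely this double count of near-collision triples --- Cauchy--Schwarz on the fibres of the $\delta$-cover of $\pi_{\theta}(P)$ for the lower bound $|\mathcal{T}| \gtrsim |E|\,m\delta^{-\tau}$, and the transversality count of $\lesssim 1 + 1/|p-q|$ admissible angles per pair combined with a dyadic summation against the Frostman condition $|P \cap B(x,r)| \leq A\delta^{-\varepsilon}(r/\delta)$ for the upper bound, which is exactly where the factors $A\delta^{-1-\varepsilon}$ and $\log(1/\delta)$ arise. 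The only (harmless) caveat is that your lower bound requires $\delta^{-\tau} \geq 2$, i.e.\ $\delta$ small in terms of $\tau$, but this is the regime in which the proposition is applied in Section \ref{proofForGeneralSets}, where $\delta_{i} \to 0$.
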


The proposition below shows that if $0 \leq s \leq t \leq d$, then ``large" $(\delta,t)$-sets always contain ``large" $(\delta,s)$-sets. The argument below is practically repeated from \cite[Proposition A.1]{FasslerOrponen}, but we include it for the reader's convenience:

\begin{prop} Let $\delta > 0$ and $0 \leq s \leq t \leq d$. Assume that $P_{0} \subset B(0,1) \subset \mathbb{R}^{d}$ is a $\delta$-separated set with $|P_{0}| \geq c \delta^{-t}$ and satisfying $|P_{0} \cap B(x,r)| \leq C (r/\delta)^{t}$ for all $x \in \mathbb{R}^{d}$ and $r \geq \delta$, where $0 < c < C < \infty$ are constants. Then, there exists a subset $P \subset P_{0}$ with cardinality $|P| \gtrsim_{d} (c/C)\delta^{-s}$ satisfying $|P \cap B(x,r)| \lesssim_{d} (r/\delta)^{s}$ for all $x \in \mathbb{R}^{d}$ and $r \geq \delta$.
\end{prop}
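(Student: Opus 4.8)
The plan is to pass from Euclidean balls to dyadic cubes and to realise the desired subset as the value of an integral max-flow problem on the dyadic tree. Assume $\delta = 2^{-N}$ and for $0 \le k \le N$ let $\mathcal{D}_{k}$ denote the dyadic cubes of side $2^{-k}$ meeting $B(0,1)$, so that the cubes of side $\delta$ are the ``leaves''. The hypothesis $|P_{0} \cap B(x,r)| \le C(r/\delta)^{t}$ transfers to cubes as $|P_{0} \cap Q| \lesssim_{d} C(\ell(Q)/\delta)^{t}$ for every dyadic $Q$, since $Q$ sits inside a ball of radius $\sqrt{d}\,\ell(Q)$; conversely any ball $B(x,r)$ with $r \ge \delta$ is covered by $O_{d}(1)$ dyadic cubes of side comparable to $r$. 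Hence it suffices to produce $P \subseteq P_{0}$ with $|P| \gtrsim_{d} (c/C)\delta^{-s}$ and the dyadic regularity $|P \cap Q| \le (\ell(Q)/\delta)^{s}$ for every dyadic cube $Q$: the second property immediately yields $|P \cap B(x,r)| \lesssim_{d} (r/\delta)^{s}$ after covering $B(x,r)$ by cubes, the leaf scale $r = \delta$ being free since $P \subseteq P_{0}$ is $\delta$-separated.

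I would then set up the selection as an integral flow. View each point of $P_{0}$ as wishing to send one unit from its leaf up to the root, and impose capacity $u(Q) := \lfloor (\ell(Q)/\delta)^{s} \rfloor$ on the edge joining $Q$ to its parent, noting $u(Q) \ge \tfrac{1}{2}(\ell(Q)/\delta)^{s}$ because $\ell(Q) \ge \delta$. By the integral max-flow--min-cut theorem, the maximal number of points selectable subject to $|P \cap Q| \le u(Q)$ for all dyadic $Q$ equals the minimum, over antichains $\mathcal{A}$ of dyadic cubes covering $P_{0}$, of $\sum_{Q \in \mathcal{A}} u(Q)$ (cutting the root edge costs $u(B(0,1)) \sim \delta^{-s}$, already admissible). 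Any such flow selects an honest subset $P \subseteq P_{0}$ with the required dyadic regularity, so the whole problem reduces to bounding this min-cut from below.

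The heart of the matter, and the one genuinely quantitative step, is the lower bound $\sum_{Q \in \mathcal{A}} (\ell(Q)/\delta)^{s} \gtrsim_{d} (c/C)\delta^{-s}$ for every dyadic cover $\mathcal{A}$ of $P_{0}$. This is where the hypotheses $|P_{0}| \ge c\delta^{-t}$ and $s \le t$ enter. Since the cubes of $\mathcal{A}$ are disjoint and cover $P_{0}$, and each carries $\lesssim_{d} C(\ell(Q)/\delta)^{t}$ points, counting points gives $c\delta^{-t} \le |P_{0}| \lesssim_{d} C\delta^{-t} \sum_{Q \in \mathcal{A}} \ell(Q)^{t}$, whence $\sum_{Q} \ell(Q)^{t} \gtrsim_{d} c/C$. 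Because every $\ell(Q) \le 1$ and $s \le t$, one has $\ell(Q)^{s} \ge \ell(Q)^{t}$ termwise, so $\sum_{Q} \ell(Q)^{s} \ge \sum_{Q} \ell(Q)^{t} \gtrsim_{d} c/C$, and multiplying by $\delta^{-s}$ finishes the bound. Feeding this back, the min-cut, and hence the size of the selected set $P$, is $\gtrsim_{d} (c/C)\delta^{-s}$, as desired.

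I expect the only real subtlety to be organising the flow/min-cut duality cleanly: in particular, checking that a minimal cut may be taken to be an antichain covering $P_{0}$, after discarding nested cut cubes and treating a cut source edge at a leaf as that leaf being placed in the antichain, and absorbing the integrality loss from the floors into a harmless factor of $2$. The geometric conversions between balls and dyadic cubes are routine and affect only the $d$-dependent constants. The single inequality $\ell(Q)^{s} \ge \ell(Q)^{t}$ for $\ell(Q) \le 1$ is what makes an $s$-regular subset cheaper to cover than a $t$-regular one, and is ultimately the reason the construction succeeds.
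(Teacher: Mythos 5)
Your proposal is correct, and it takes a genuinely different route from the paper. The paper (following F\"assler--Orponen) constructs $P$ explicitly by a top-down greedy pruning: starting from one point per $\delta$-cube, it thins each dyadic cube $Q$, scale by scale, until it holds between $\tfrac{1}{2}(d(Q)/\delta)^{s}$ and $(d(Q)/\delta)^{s}$ points, and then runs a stopping-time argument: to each original point it attaches the maximal dyadic cube $Q_{x}$ in which the half-saturation $|P \cap Q_{x}| \geq \tfrac{1}{2}(d(Q_{x})/\delta)^{s}$ survives, notes that these maximal cubes are disjoint and cover $P_{0}$, and counts exactly as you do: $|P| \gtrsim \delta^{-s}\sum d(Q_{x})^{s} \geq \delta^{-s}\sum d(Q_{x})^{t} \gtrsim (c/C)\delta^{-s}$. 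Your max-flow/min-cut formulation is a dual description of the same combinatorics: the greedy pruning is an explicit (in fact maximal) integral flow, and the paper's antichain of maximal half-saturated cubes is precisely a witnessing cut, with the half-saturation playing the role your floor bound $u(Q) \geq \tfrac{1}{2}(\ell(Q)/\delta)^{s}$ plays. What duality buys you is the elimination of all construction and verification: you never have to exhibit $P$ or prove that the surviving cubes are simultaneously disjoint, covering, and saturated --- the min-cut quantifies over \emph{all} antichain covers at once, and the one genuinely quantitative step (the termwise inequality $\ell(Q)^{s} \geq \ell(Q)^{t}$ for $\ell(Q) \leq 1$, which is also the engine of the paper's proof) is isolated cleanly. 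What the paper's route buys is self-containedness (no appeal to integral max-flow--min-cut) and an explicit set. Two small points you flagged that do need the care you anticipated: since $B(0,1)$ meets several unit dyadic cubes, one should add a super-root (costing only a $d$-dependent constant), and since a $\delta$-cube can contain $O_{d}(1)$ points of a $\delta$-separated set, the absorption of cut source edges into leaf cubes uses that cutting all source edges in a leaf costs at least $1 = u(\text{leaf})$; both are routine and your sketch handles them correctly.
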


\begin{proof} Without loss of generality, assume that $\delta = 2^{-k}$ for some $k \in \mathbb{N}$ and $P_0 \subset [0,1]^{d}$. Denote by $\mathcal{D}_{k}$ the dyadic cubes in $\mathbb{R}^{d}$ of side-length $2^{-k}$.  For a particular cube $Q$ we will write $d(Q)$ for its diameter and $\ell (Q)$ for the common side-length. First, find all the dyadic cubes in $\mathcal{D}_{k}$ which intersect $P_{0}$, and choose a single point of $P_{0}$ inside each of them. The finite set so obtained is denoted by $P_{1}$. Next, modify $P_{1}$ as follows. Consider the cubes in $\mathcal{D}_{k - 1}$. If one of these, say $Q^{k - 1}$, satisfies
\begin{displaymath} |P_{1} \cap Q^{k - 1}| > \left(\frac{d(Q^{k - 1})}{\delta}\right)^{s}, \end{displaymath}
remove points from $P_{1} \cap Q^{k - 1}$, until the reduced set $P_{1}'$ satisfies
\begin{displaymath} \frac{1}{2}\left(\frac{d(Q^{k - 1})}{\delta}\right)^{s} \leq |P_{1}' \cap Q^{k - 1}| \leq \left(\frac{d(Q^{k - 1})}{\delta}\right)^{s}. \end{displaymath}
Repeat this for all cubes in $\mathcal{D}_{k - 1}$ to obtain $P_{2}$. Then, repeat the procedure at all dyadic scales up from $\delta$, one scale at a time: whenever $P_{j}$ has been defined, and there is a cube $Q^{k - j} \in \mathcal{D}_{k - j}$ such that
\begin{displaymath} |P_{j} \cap Q^{k - j}| > \left(\frac{d(Q^{k - j})}{\delta}\right)^{s}, \end{displaymath}
remove points from $P_{j} \cap Q^{k - j - 1}$, until the reduced set $P_{j}'$ satisfies
\begin{equation}\label{appForm1} \frac{1}{2}\left(\frac{d(Q^{k - j})}{\delta}\right)^{s} \leq |P_{j}' \cap Q^{k - j}| \leq \left(\frac{d(Q^{k - j})}{\delta}\right)^{s}. \end{equation}
Stop the process when the remaining set of points, denoted by $P$, is entirely contained in some dyadic cube $Q_{0} \subset [0,1]^{d}$. Now, we claim that for every point $x \in P_{1}$ there exists a unique maximal dyadic cube $Q_{x} \subset Q_{0}$ such that $\ell(Q_{x}) \geq \delta$ and
\begin{equation}\label{appForm2} |P \cap Q_{x}| \geq \frac{1}{2}\left(\frac{d(Q_{x})}{\delta}\right)^{s}. \end{equation}
We only need to show that there exists \textbf{at least one} cube $Q_{x} \ni x$ satisfying \eqref{appForm2}; the rest follows automatically from the dyadic structure. If $x \in P$, we have \eqref{appForm2} for the dyadic cube $Q_{x} \in \mathcal{D}_{k}$ containing $x$. On the other hand, if $x \in P_{1} \setminus P$, the point $x$ was deleted from $P_{1}$ at some stage. Then, it makes sense to define $Q_{x}$ as the dyadic cube containing $x$, where the `last deletion of points' occurred. If this happened while defining $P_{j + 1}$, say, we have \eqref{appForm1} with $Q^{k - j} = Q_{x}$. But since this was the last cube containing $x$, where \textbf{any} deletion of points occurred, we see that that $P_{j}' \cap Q_{x} = P \cap Q_{x}$. This gives \eqref{appForm2}.
\newpage
Now, observe that the cubes $\{Q_{x} : x \in P_{1}\}$,
\begin{itemize}
\item cover $P_{0}$, because they cover every cube in $\mathcal{D}_{k}$ containing a point in $P_{0}$,
\item are disjoint, hence partition the set $P$.
\end{itemize}
These facts and \eqref{appForm2} yield the lower bound
\begin{eqnarray*} |P| \ = \ \sum |P \cap Q_{x}|  \ \gtrsim \  \delta^{-s}\sum d(Q_{x})^{s} &\geq& \delta^{-s} \sum d(Q_{x})^{t} \\ \\ 
&\gtrsim_{d}& \frac{\delta^{t - s}}{C} \sum |P_{0} \cap Q_{x}| \\ \\
&=& \frac{\delta^{t - s}}{C}|P_{0}| \\ \\
&\geq&  \frac{c\delta^{-s}}{C}.
\end{eqnarray*}
It remains to prove that $|P \cap B(x,r)| \lesssim (r/\delta)^{s}$ for all balls $B(x,r)$ with $r \geq \delta$. For dyadic cubes $Q \in \mathcal{D}_{l}$ with $l \leq k$ it follows immediately from the construction of $P$, in particular the right hand side of \eqref{appForm1}, that
\begin{displaymath} |P \cap Q| \leq \left(\frac{d(Q)}{\delta}\right)^{s}. \end{displaymath}
The statement for balls follows by observing that any intersection $P \cap B(x,r)$ can be covered by $\sim_{d} 1$ intersections $P \cap Q$, where $Q$ is a dyadic cube with $d(Q) \sim r$. 
\end{proof}

For later use, we record a corollary, stated in the terminology of $(\delta,s)$-sets:

\begin{cor}\label{frostman} Let $a > 0$ and $s \geq 1$. Assume that $P \subset B(0,1)$ is a $(\delta,s)$-set with parameters $A$ and $\varepsilon$, and cardinality $|P| \geq a\delta^{\varepsilon-s}$. Then, there exists a $(\delta,1)$-set $P' \subset P$ with parameters $A' \sim 1$ and $\varepsilon' = 0$, and with $|P'| \gtrsim (a/A)\delta^{2\varepsilon-1}$. 
\end{cor}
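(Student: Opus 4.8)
The plan is to deduce the corollary directly from the preceding Proposition by matching up the parameters; no new ideas are needed beyond careful bookkeeping. I would apply that Proposition with the exponent $t := s$ (recall $s \geq 1$, so the hypothesis $1 \leq t \leq d$ holds, using also that $P$ being a $(\delta,s)$-set forces $s \leq d$) and with target exponent equal to $1$.

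First I would read off the two constants required by the Proposition from the data of the corollary. The $(\delta,s)$-set bound $|P \cap B(x,r)| \leq A\delta^{-\varepsilon}(r/\delta)^{s}$ is precisely the upper bound $|P \cap B(x,r)| \leq C(r/\delta)^{s}$ with $C := A\delta^{-\varepsilon}$. The cardinality hypothesis $|P| \geq a\delta^{\varepsilon - s}$ can be rewritten as $|P| \geq c\delta^{-s}$ with $c := a\delta^{\varepsilon}$, which is exactly the lower bound demanded by the Proposition. Since $P$ is $\delta$-separated and contained in $B(0,1)$, all hypotheses are met with these choices of $c$ and $C$.

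The Proposition then produces a subset $P' \subset P$ with $|P' \cap B(x,r)| \lesssim_{d} (r/\delta)^{1}$ for all $x$ and $r \geq \delta$, which says exactly that $P'$ is a $(\delta,1)$-set with $\varepsilon' = 0$ and parameter $A' \sim_{d} 1$; as $d$ is fixed this is $A' \sim 1$. Moreover $P'$ inherits $\delta$-separation from $P$. For the cardinality, the Proposition gives $|P'| \gtrsim_{d} (c/C)\delta^{-1}$, and substituting the values above yields $(c/C)\delta^{-1} = (a\delta^{\varepsilon})/(A\delta^{-\varepsilon}) \cdot \delta^{-1} = (a/A)\delta^{2\varepsilon - 1}$, which is the claimed lower bound.

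The only point requiring attention---and the closest thing to an obstacle---is the correct propagation of the factor $\delta^{-\varepsilon}$: it enters as the covering constant $C$, and because the cardinality lower bound contributes a matching $\delta^{\varepsilon}$ through $c$, the two combine in the ratio $c/C$ to produce the exponent $2\varepsilon$ visible in the final bound $\delta^{2\varepsilon - 1}$. Once this is tracked correctly, the corollary follows immediately.
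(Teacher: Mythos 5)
Your proposal is correct and matches the paper's proof exactly: the paper also deduces the corollary by applying the preceding proposition with $C = A\delta^{-\varepsilon}$ and $c = a\delta^{\varepsilon}$ (target exponent $1$, original exponent $t = s$). Your parameter bookkeeping, including the origin of the $\delta^{2\varepsilon}$ factor from the ratio $c/C$, is precisely the intended argument.
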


\begin{proof} Apply the previous proposition with $C = A\delta^{-\varepsilon}$ and $c = a\delta^{\varepsilon}$. \end{proof}

The corollary obviously fails for $s \leq 1$, but in this case the substitute will be the trivial observation that every $(\delta,s)$-set is automatically a $(\delta,1)$-set (with the same parameters $A$ and $\varepsilon$).  We are ready to prove Theorem \ref{mainTheoremForGeneralSets}.

\begin{proof}[Proof of Theorem \ref{mainTheoremForGeneralSets}] Write $\dim_{\textup{A}} F =: s \in [0,2]$, and let $\varepsilon > 0$ be the constant from Definition \ref{deltaOne} (to be specified shortly, after we have formulated a counter assumption). By the definition of Assouad dimension, two things hold:
\begin{itemize}
\item[(i)] We may find two sequences $(r_{i})_{i \in \mathbb{N}}$ and $(R_{i})_{i \in \mathbb{N}}$ of positive reals such that $0 < r_{i} < R_{i} < 1$, and $r_{i}/R_{i} \to 0$, and $N_{r_{i}}(B(x_{i},R_{i}) \cap F) \geq (R_{i}/r_{i})^{s - \varepsilon}$ for some $x_{i} \in \mathbb{R}^{2}$.
\item[(ii)] For any $0 < r \leq R < 1$ and $x \in \mathbb{R}^2$, we have $N_{r}(B(x,R) \cap F) \leq A(R/r)^{s + \varepsilon}$ for some constant $A = A_{\varepsilon,F} \geq 1$. We now declare that we use this constant $A$ in Definition \ref{deltaOne}. 
\end{itemize}

Fix $0 < r_{i} < R_{i} < 1$ as in (i), write $\delta_{i} := r_{i}/R_{i}$, and let $P_{i}' \subset B(x_{i},R_{i}) \cap F$ be an $r_{i}$-separated set with $|P_{i}'| \geq \delta_{i}^{-s + \varepsilon}$. Then, if $T_{x_{i},R_{i}}$ is the simplest possible affine mapping taking $B(x_{i},R_{i})$ to $B(0,1)$, we find that $P_{i} := T_{x_{i},R_{i}}(P_{i}')$ is a $\delta_{i}$-separated set with cardinality $\geq \delta_{i}^{-s + \varepsilon}$. Furthermore, $P_i$ is a $(\delta_{i},s)$-set with parameters $A$ and $\varepsilon$: if $r \geq \delta_{i}$, then $rR_{i} \geq r_{i}$, and so (ii) gives
\begin{eqnarray*} 
|P_{i} \cap B(y,r)| \ = \  |P_{i}' \cap T_{x_{i},R_{i}}^{-1}(B(y,r))| & \leq& A\left(\frac{rR_{i}}{r_{i}} \right)^{s + \varepsilon} \\ \\
& =& A\left(\frac{r}{\delta_{i}} \right)^{s + \varepsilon} \\ \\
&\leq& A\delta_{i}^{-\varepsilon}\left(\frac{r}{\delta_{i}} \right)^{s}. 
\end{eqnarray*}

If $s > 1$, we use Corollary \ref{frostman} to find a $(\delta_{i},1)$-set $\tilde{P}_{i} \subset P_{i}$ such that $|\tilde{P}_{i}| \gtrsim \delta_{i}^{2\varepsilon-1}/A$. If $s \leq 1$, we just repeat the observation that $P_{i}$ is a $(\delta_{i},1)$-set. These alternatives will lead to the generic lower bounds $\dim_{\textup{A}} \pi_{\theta}F \geq s$ or $\dim_{\textup{A}} \pi_{\theta}F \geq 1$, respectively: their proofs are so similar that we only record explicitly the case $s \leq 1$. 

To reach a contradiction, we assume that Theorem \ref{mainTheoremForGeneralSets} fails for the particular set $F$ we are considering: thus, we assume that there is a constant $\tau > 0$, and a positive measure set of directions $E_{0} \subset [0,\pi)$ such that $\dim_{\textup{A}} \pi_{\theta}F < s - \tau$ for all $\theta \in E_{0}$. Then, we finally fix $\varepsilon := \tau/10$. In particular, we have $\delta^{\tau - 2\varepsilon}\log(1/\delta) \lesssim_{\tau} \delta^{\tau/2}$ for $0 < \delta < 1$. For each direction $\theta \in E_{0}$, one should be able to find infinitely many values of $i \in \mathbb{N}$ such that 
\begin{displaymath} N_{r_{i}}(B(x,R_{i}) \cap \pi_{\theta}F) < \left(\frac{R_{i}}{r_{i}} \right)^{s - \tau} \end{displaymath}
for all $x \in \mathbb{R}$; otherwise clearly $\dim_{\textup{A}} \pi_{\theta}F \geq s - \tau$. 

With the (easier) Borel-Cantelli lemma in mind, we define the sets $E_{0}^{i}$, $i \in \mathbb{N}$, by
\begin{displaymath} E_{0}^{i} := \left\{\theta \in E_{0} : N_{r_{i}}(B(t,R_{i}) \cap \pi_{\theta}F) < \left(\frac{R_{i}}{r_{i}} \right)^{s - \tau} \text{ for all } t \in \mathbb{R}\right\}. \end{displaymath}
According to the preceding discussion, every point of $E_{0}$ should lie in $E_{0}^{i}$ for infinitely many values of $i \in \mathbb{N}$. So, by the Borel-Cantelli lemma, we wish to show that
\begin{equation}\label{summability} \sum_{i \in \mathbb{N}} \mathcal{H}^{1}(E_{0}^{i}) < \infty. \end{equation}
This forces $\mathcal{H}^{1}(E_{0}) = 0$ and brings the desired contradiction.

To estimate the measure of $E_{0}^{i}$, it suffices to estimate the maximum number of $\delta_{i}$-separated points in $E_{0}^{i}$, where $\delta_{i} = r_{i}/R_{i}$: if this number is $N_{i}$, we have $\mathcal{H}^{1}(E_{0}^{i}) \lesssim \delta_{i}N_{i}$. Since $P_{i}$ is a $(\delta_{i},1)$-set with parameters $A$ and $\varepsilon$, and cardinality $|P_{i}| \geq \delta^{-s + \varepsilon}$, Proposition \ref{discreteMarstrand} and the choice of $\varepsilon$ imply that there are $\lesssim A\delta_{i}^{\tau - 1 - 2\varepsilon}\log(1/\delta_{i}) \lesssim_{\tau} A\delta_{i}^{\tau/2 - 1}$ angles $\theta \in [0,\pi)$, which are $\delta_{i}$-separated and such that $N_{\delta_{i}}(\pi_{\theta}(P_{i})) \leq \delta_{i}^{\tau - s}$. 

We now argue that if $\theta$ satisfies the converse inequality, then $\theta \notin E_{0}^{i}$. Indeed, if $N_{\delta_{i}}(\pi_{\theta}(P_{i})) > \delta_{i}^{\tau - s}$, then, for a certain $t \in \mathbb{R}$ (depending on the mapping $T_{x_{i},R_{i}}$) we have 
\begin{displaymath} N_{r_{i}}(B(t,R_{i}) \cap \pi_{\theta}F) \geq N_{r_{i}}(B(t,R_{i}) \cap \pi_{\theta}(T_{x_{i},R_{i}}^{-1}(P_{i}))) > \delta_{i}^{\tau - s} = \left(\frac{R_{i}}{r_{i}}\right)^{\tau - s}, \end{displaymath}
and so $\theta \notin E_{0}^{i}$. These observations show that $N_{i} \lesssim_{\tau} A\delta_{i}^{\tau/2 - 1}$, and hence $\mathcal{H}^{1}(E_{0}^{i}) \lesssim_{\tau} A\delta_{i}^{\tau/2}$. If $\delta_{i} \to 0$ fast enough (as we may assume), this proves \eqref{summability} and the theorem. \end{proof}

\section{Proof of Theorem \ref{assouadprojection}: the discrete rotations case} \label{discreterotationsproof}

The discrete rotations case of Theorem \ref{assouadprojection} will be proved by extending the work of \cite{Fraseretal} on self-similar sets in the line to graph-directed self-similar sets in the line, and then recalling that projections of planar self-similar sets with discrete rotations onto lines are precisely graph-directed self-similar sets, see Section \ref{GDequalsprojections}.

\subsection{Graph-directed self-similar sets in the line} \label{GDsection}

Graph-directed self-similar sets are an important and natural generalisation of self-similar sets.  First considered by Mauldin and Williams \cite{Mauldin}, roughly speaking one has a family of sets rather than a single set (as in the self-similar case) and each member of the family is made up of scaled copies of other sets in the family.  More precisely, let $\Gamma = (\mathcal{V}, \mathcal{E})$ be a finite connected directed graph, where $\mathcal{V}$ is a finite vertex set and $\mathcal{E}$ is a finite set of edges, each of which starts and ends at a vertex. Note that there may be multiple edges connecting a particular pair of vertices.   To each $e \in \mathcal{E}$, associate a contracting similarity map $S_e : \mathbb{R} \to \mathbb{R}$  with contraction ratio $c_e$. We assume for convenience that $S_e( [0,1]) \subseteq [0,1]$.  For $u,v \in \mathcal{V}$, let $\mathcal{E}_{u,v} \subseteq \mathcal{E}$ be the set of edges from $u$ to $v$.  Then there exists a unique $\lvert \mathcal{V} \rvert$-tuple of compact nonempty sets $\{F_v\}_{v \in \mathcal{V}}$,  each contained in $[0,1]$, satisfying
\[
F_v \ = \ \bigcup_{u \in \mathcal{V}} \bigcup_{e \in \mathcal{E}_{u,v}} S_e(F_u).
\]
The family $\{F_v\}_{v \in \mathcal{V}}$ is the \emph{family of graph-directed self-similar sets}.  Since the directed graph $\Gamma$ is connected, it follows that the sets $F_v$ have a common Hausdorff dimension.  Here we only consider graph-directed self-similar sets in the line, but one can consider more general models in the same way where, for example, one works with more general maps or in higher dimensions.

Zerner \cite{zerner}, following Lau-Ngai \cite{laungai}, defined the \emph{weak separation property} (WSP) for self-similar sets.  This is weaker than the open set condition (OSC) but in many cases plays a similar role in that if the WSP is satisfied, then the overlaps in the construction are controllable and the attractor shares many properties with attractors in the OSC case.    Das and Edgar \cite{GDWSP} generalised the WSP to the graph-directed setting and we will use their condition here.  For $u,v \in \mathcal{V}$, let
\[
\mathcal{F}_{u,v} = \{ S_\textbf{e}^{-1} \circ S_\textbf{f} : \textbf{e}, \textbf{f} \in \mathcal{E}_{u,v}^*\}
\]
where $\mathcal{E}_{u,v}^*$ denotes the set of all finite directed paths in the graph going from $u$ to $v$ and we write $S_\textbf{e}$ for the similarity defined by traversing $\textbf{e}$ and composing the similarity maps corresponding to each edge in the appropriate order.  Also $c_\textbf{e}$ will denote the similarity ratio of $S_\textbf{e}$.
\begin{defn}[GDWSP]
The graph $\Gamma$ and associated mappings satisfy the \emph{graph-directed weak separation property (GDWSP)} if for any (or equivalently all) $v \in \mathcal{V}$, the identity is an isolated point of $\mathcal{F}_{v,v}$ in the topology of pointwise convergence. 
\end{defn}

We do not explicitly define the WSP here but note that it is precisely the GDWSP in the 1-vertex case, i.e., in the case where graph-directed self-similar sets reduce to self-similar sets.  Das and Edgar gave many equivalent formulations of the GDWSP \cite{GDWSP} which parallel Zerner's list of equivalent definitions of the WSP in the self-similar (or 1-vertex) case \cite[Theorem 1]{zerner}.  Das and Edgar then went on to show that many of the properties of self-similar sets satisfying the WSP generalise to graph-directed self-similar sets satisfying the GDWSP.  Our main result for graph-directed self-similar sets is the following.

\begin{thm} \label{GDassouad}
Let $\{F_v\}_{v \in \mathcal{V}}$ be a family of graph-directed self-similar sets in $[0,1]$ with common Hausdorff dimension $s<1$.  Then
\begin{enumerate}
\item If the GDWSP is satisfied, then for all $v \in \mathcal{V}$ we have $\mathcal{H}^{s} (F_v) > 0$ and $\dim_\text{\emph{A}}   F_v =  \dim_\text{\emph{H}}   F_v  =   s  <  1$.
\item If the GDWSP is not satisfied, then for all $v \in \mathcal{V}$ we have $\mathcal{H}^{s} (F_v) = 0$ and $\dim_\text{\emph{A}}   F_v  = 1$.
\end{enumerate}
\end{thm}

This result can be seen as a generalisation of \cite[Theorem 1.3]{Fraseretal} and \cite[Corollary 3.2]{FarkasFraser} to the graph-directed setting.  In particular, it gives a precise dichotomy for the Assouad dimension of graph-directed self-similar sets on the real line and proves that if the Hausdorff dimension is strictly less than 1, then the GDWSP is equivalent to, for example, positivity of the Hausdorff measure in the Hausdorff dimension.  The `Hausdorff  measure in the Hausdorff dimension' of a set $E$ is $\mathcal{H}^{\dim_\mathrm{H} E}(E)$.   This shows that the GDWSP can be viewed as a property of the \emph{sets} $F_v$, rather than the defining graph.

The proof of this theorem is divided into two parts, proving \emph{1.} (concerning weak separation) and \emph{2.} (concerning lack of weak separation) respectively.

\subsubsection{Proof of \emph{1.}: systems with weak separation}

This result is proved by combining previous work of Das and Edgar on the GDWSP \cite{GDWSP} with Falconer's implicit theorems \cite{quasi} and recent work of Farkas and Fraser on Ahlfors regularity and Hausdorff measure \cite{FarkasFraser}.

Let $s$ be the common Hausdorff dimension of the graph-directed family $\{F_v\}_{v \in \mathcal{V}}$ and fix $v \in \mathcal{V}$. Since the GDWSP is satisfied, it follows from \cite[(4b)]{GDWSP} that there exists a uniform constant $K>0$ such that for all sets $U \subseteq F_v$ and $u \in \mathcal{V}$,
\[
\# \left\{ S_\textbf{e} : \textbf{e} \in \mathcal{E}^*_{u,v}, \,  c_{\min} \lvert U\rvert <  c_\textbf{e} \leq \lvert U \rvert , \, S_\textbf{e}(F_u) \cap U \neq \emptyset \right\} \ < \ K.
\]
It then follows from \cite[Theorem 2]{quasi} that
\[
\mathcal{H}^s(F_v) \geq c_{\min}^s K^{-1} \lvert \mathcal{V} \rvert^{-1}> 0.
\]
We note that this observation was made explicitly in the self-similar case by Zerner \cite[Corollary on p. 3535]{zerner}.

Now that we have established positivity of the Hausdorff measure, the equality of the Hausdorff and Assouad dimensions is an immediate consequence of \cite[Corollary 3.1]{FarkasFraser}, which proved that for a graph-directed self-similar set $E$, the Hausdorff measure of $E$ is positive in the Hausdorff dimension if and only if $E$ is Ahlfors regular (that is, $\mathcal{H}^{\dim_\mathrm{H} E}(B(x,r) \cap E) \sim r^{\dim_\mathrm{H} E}$ for all $x \in E$ and $0 < r \leq \textup{diam}(E)$). We note that \cite[Corollary 3.1]{FarkasFraser} is stated for irreducible subshifts of finite type rather than graph-directed sets, but these two notions are equivalent: see the discussion following \cite[Corollary 3.1]{FarkasFraser} and \cite[Proposition 2.5--2.6]{FarkasFraser}.  Alternatively, see \cite[Propositions 2.2.6 and 2.3.9]{LMsym}.  It is also well-known and straightforward to prove that Ahlfors regular sets have equal Hausdorff and Assouad dimensions.

\subsubsection{Proof of \emph{2.}: systems without weak separation}

This part follows the proof of \cite[Theorem 3.1]{Fraseretal}.  Indeed, one just has to check that the proof there extends to the graph-directed setting.  For completeness we include the argument, but in a slightly streamlined form.

 Since the GDWSP is not satisfied, the identity, $I$, is not an isolated point of $\mathcal{F}_{v,v}$ in the topology of pointwise convergence, which is equivalent to the uniform operator topology in this setting since all the maps in $\mathcal{F}_{v,v}$ are similarities. This means we may find a sequence $(\textbf{e}_k, \textbf{f}_k)\in \mathcal{E}_{v,v}^* \times \mathcal{E}^*_{v,v}$ such that
\[
0< \| S_{\textbf{e}_k}^{-1} \circ  S_{\textbf{f}_k}- I \| \to 0
\]
as $k \to \infty$, where $\| \cdot \|$ denotes the operator norm.  Moreover, we may assume that for all $k$ the maps $S_{\textbf{e}_k}$ and $S_{\textbf{f}_k}$ have no reflectional components: if this was not the case, then there must be an edge $\textbf{g} \in \mathcal{E}_{v,v}$ which also contains a reflection and then, whenever $S_{\textbf{e}_k}$ and $S_{\textbf{f}_k}$ both contain reflections, one may replace them by $S_{\textbf{g}\textbf{e}_k}$ and $S_{\textbf{g}\textbf{f}_k}$ in the sequence.  Both of these maps have no reflectional component and 
\[
 S_{\textbf{g}\textbf{e}_k}^{-1} \circ  S_{\textbf{g}\textbf{f}_k}  = S_{\textbf{e}_k}^{-1} \circ  S_{\textbf{f}_k}
\]
and so the convergence is unaffected.

Let $\phi_k = S_{\textbf{e}_k}^{-1} \circ  S_{\textbf{f}_k} - I$, which is either a similarity or a non-zero constant function.   We may choose a point  $a \in F_v$, an edge $\textbf{e}' \in \mathcal{E}^*_{v,v}$, and a small radius $r>0$ such that $S_{\textbf{e}'}(a) = a$ and for all $k$, we have $\phi_k(B(a,r)) \subset (0,\infty)$.  This can be achieved by choosing two such fixed points $a, a'$ and $r< \lvert a-a'\rvert/2$ and then observing that for all $k$ either $\phi_k(B(a,r))$ or $\phi_k(B(a',r))$ does not include zero and so lies to the left or right of zero.  Then by choosing a subsequence (and flipping the axes if necessary) we can achieve this using $a$ or $a'$ for all $k$. Moreover, by an affine change of coordinates we may assume without loss of generality that $a=0$.  Finally we may assume that $S_{\textbf{e}'}$ does not contain a reflection, since if it did we could replace it by $S_{\textbf{e}'}^2$.  Denote the map $S_{\textbf{e}'}$ by $T$ and let $c = c_{\textbf{e}'} \in (0,1)$.

For $k \in \mathbb{N}$, let 
\[
\delta_k = \inf\{\phi_k(x) : x \in B(0,r/2)\}>0
\]
observing that $\delta_k \to 0$ as $k \to \infty$.    Choose $M \in \mathbb{N}$ large enough to ensure that
\[
T^M(F_v) \subseteq B(0,r/2)
\]
which we may do since $T$ is a contraction and fixes $0$. Observe that for any $m \in \mathbb{N}$
\[
(T^{-m} \circ S_{\textbf{e}_k}^{-1} \circ  S_{\textbf{f}_k} \circ T^m-I)(x) = c^{-m} \phi_k(T^m(x))
\]
and so for $m \geq M$ and any $x \in F_v$ we have
\begin{equation} \label{estimate}
c^{-m} \delta_k \leq (T^{-m} \circ S_{\textbf{e}_k}^{-1} \circ  S_{\textbf{f}_k} \circ T^m-I)(x)  \leq 3 c^{-m} \delta_k,
\end{equation}
observing that $\phi_k(T^m(x)) \leq 3\delta_k$ since $\phi_k$ is a similarity and $\inf\{\phi_k(x) : x \in B(0,r)\}>0$ by assumption. Note that the definition of $\delta_k$ used $r/2$ here instead of $r$.  Let $\varepsilon>0$ and choose $k_j, m_j \in \mathbb{N}$ and maps $g_j, h_j$ by induction on $j$ as follows.  Begin by choosing $k_1$ large enough that
\[
\delta_{k_1} < \varepsilon c^M
\]
and then choose $m_1 \geq M$ such that
\[
c^{-m_1} \delta_{k_1} < \varepsilon \leq c^{-m_1-1} \delta_{k_1}.
\]
Also define
\[
g_1 = T^{-m_1} \circ  S_{\textbf{e}_{k_1}}^{-1} \qquad \text{and} \qquad h_1 =  S_{\textbf{f}_{k_1}} \circ T^{m_1}.
\]
For $j \geq 2$, similar to above choose $k_j, m_j$ such that
\begin{equation} \label{bounds333}
c_{j-1} c^{-m_j} \delta_{k_j} < \varepsilon \leq c_{j-1} c^{-m_j-1}\delta_{k_j},
\end{equation}
where $c_{j-1}$ is the similarity ratio of $g_{j-1}$, and define 
\[
g_j = g_{j-1} \circ  T^{-m_j} \circ  S_{\textbf{e}_{k_j}}^{-1} \qquad \text{and} \qquad h_j = S_{\textbf{f}_{k_j}} \circ  T^{m_j} \circ  h_{j-1}. 
\]
It follows that for all $j$ and all $x \in F_v$
\begin{eqnarray*}
(g_j \circ h_j -g_{j-1} \circ h_{j-1})(x) &=&  g_{j-1} \circ  T^{-m_j} (  S_{\textbf{e}_{k_j}}^{-1}   \circ S_{\textbf{f}_{k_j}} \circ  T^{m_j} \circ  h_{j-1}(x))\\ 
&\,& \qquad - g_{j-1} \circ  T^{-m_j} (  T^{m_j} \circ h_{j-1}(x)) \\  
&=& c_{j-1} c^{-m_j} \phi_{k_j}(T^{m_j} \circ h_{j-1}(x))
\end{eqnarray*}
which, by (\ref{bounds333}), implies that
\begin{equation} \label{separation}
c \varepsilon \,  \leq  \,  c_{j-1} c^{-m_j}\delta_{k_j} \,  \leq \, (g_j \circ h_j -g_{j-1} \circ h_{j-1})(x) \, \leq \, 3 c_{j-1} c^{-m_j}\delta_{k_j}   \, \leq \,  3 \varepsilon.
\end{equation}
We will now prove by induction that for all $n \in \mathbb{N}$ we have
\begin{equation} \label{contains}
\{g_n^{-1}(0)\} \cup\{g_n^{-1} \circ g_j \circ h_j(0) : j =1, \dots, n\} \ \subseteq \ F_v.
\end{equation}
For $n=1$, this is evident.  Given the claim for $n-1$, choose $\textbf{f} \in \mathcal{E}^*_{v,v}$ such that $S_\textbf{f} = S_{\textbf{e}_{k_n}} \circ T^{m_n}$ and so, in particular, $ g_n \circ  S_\textbf{f} \circ g_{n-1}^{-1}$ is the identity.  Observe that
\begin{eqnarray*}
F_v &\supseteq&  g_n^{-1}\circ g_n \circ  S_\textbf{f}(F_v) \\
&\supseteq& \{g_n^{-1}\circ g_n \circ  S_\textbf{f} \circ g_{n-1}^{-1}(0)\} \cup \{ g_n^{-1} \circ g_n \circ S_\textbf{f}  \circ g_{n-1}^{-1} \circ g_j \circ h_j(0) : j =1, \dots, n-1\}  \\
&\,& \qquad \qquad  \qquad \qquad \qquad  \qquad \qquad \qquad  \qquad \qquad \qquad   \text{(by inductive hypothesis)} \\ 
 &=& \{g_n^{-1}(0)\} \cup\{g_n^{-1} \circ g_j \circ h_j(0) : j =1, \dots, n-1\}.
\end{eqnarray*}
Finally, the missing point, $g_n^{-1} \circ g_n \circ h_n(0) = h_n(0)$ is clearly in $F_v$ which completes the inductive argument.  For $\varepsilon = 1/n$, consider the ball centered at $x=g_n^{-1}(0) \in F_v$ with radius $R = 3 c_n^{-1}$, which is small for large $n$.  By (\ref{separation}) and (\ref{contains}) we have
\[
B(x,R) \cap F_v \ \supseteq \ \{g_n^{-1} \circ g_j \circ h_j(0) : j =1, \dots, n\}
\]
and the $n$ points on the right hand side are all separated by at least $ c_n^{-1}c/n$.  Setting $r=c_n^{-1}c/(2n)$, this means that
\[
N_r \big(B(x,R) \cap F_v ) \, \geq \,  n \, = \, (c/6)  \left(\frac{R}{r}\right)^1
\]
and this yields $\dim_\text{A} F_v = 1$ as required.

All that remains to complete the proof is to show that the Hausdorff measure of $F_v$ is zero in the Hausdorff dimension, but this follows again by \cite[Corollary 3.1]{FarkasFraser} since if the Hausdorff measure was positive, then $F_v$ would be Ahlfors regular and thus have Assouad dimension equal to $s<1$.

\subsection{Application to projections of planar self-similar sets} \label{GDequalsprojections}

The discrete rotations case in Theorem \ref{assouadprojection} follows immediately from Theorem \ref{GDassouad}.  The reason for this is that projections $\pi_\theta F$ are graph-directed self-similar subsets of $[0,1]$ (following appropriate rescaling and translating).  For example, if the group generated by $\{O_i\}_{i \in \mathcal{I}}$ is a discrete subgroup of $SO(2)$, then it is isomorphic to the finite cyclic group of order $n$ for some $n \in \mathbb{N}$.  It follows that, for a given $\theta \in [0,\pi)$,  the family
\[
\{\pi_{\theta+2\pi k/n (\text{mod } \pi)} F\}_{k=0}^{n-1}
\]
is a family of graph-directed self-similar sets with associated graph and IFS inherited from the IFS $\{S_i\}_{i \in \mathcal{I}}$ in the natural way. If the group contains orientation reversing maps, then the situation is not much more complicated. For the details, the reader is referred to \cite[Theorem 1.1]{Farkas}, which also handles the higher dimensional setting.

\section{Proof of Theorem \ref{assouadprojection}: the dense rotations case} \label{denserotationsproof}

In this section, it is be convenient to treat $\pi_{\theta}$ as a mapping $\mathbb{R}^{2} \to \mathbb{R}$ rather than $\mathbb{R}^{2} \to \operatorname{span}(\cos \theta,\sin \theta)$. In other words,
\begin{displaymath} \pi_{\theta}(x) := x \cdot (\cos \theta,\sin \theta) \end{displaymath}
for $\theta \in [0,\pi)$ and $x \in \mathbb{R}^{2}$.

We begin by reducing the proof of the dense rotations case of Theorem \ref{assouadprojection} to studying a very simple class of IFS.  We will write $\mathcal{I}^{*} = \bigcup_{k \in \mathbb{N}} \mathcal{I}^k$ for the set of finite words over $\mathcal{I}$ and for $\textbf{i} = (i_1, i_2, \dots, i_k) \in \mathcal{I}^{*}$, we write $S_\textbf{i} = S_{i_1} \circ  S_{i_2} \circ \cdots  \circ S_{i_k}$ and $c_\textbf{i} = c_{i_1} c_{i_2} \cdots   c_{i_k}$ for the contraction ratio of $S_\textbf{i}$.

\begin{lma} \label{reduction}
Let $F \subseteq [0,1]^2$ be a self-similar set such that the group generated by $\{O_i\}_{i \in \mathcal{I}}$ is dense in $O(2)$ or $SO(2)$.  Then $F$ contains a self-similar set $E$ which is the attractor of an IFS consisting of two maps $\{S_1, S_2\}$, both of which have the same contraction ratio $c \in (0,1)$, the same orthogonal component $O \in SO(2)$ corresponding to anti-clockwise rotation by an irrational multiple of $\pi$, and such that $S_1(E) \cap S_2(E) = \emptyset$.
\end{lma}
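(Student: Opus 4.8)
The plan is to build the two-map subsystem $\{S_1,S_2\}$ by extracting suitable compositions from the original IFS $\{S_i\}_{i\in\mathcal I}$ in three stages: first engineer a common contraction ratio and a common irrational rotation, then separate two copies, and finally verify the resulting attractor sits inside $F$. The key point is that any IFS whose maps are chosen from $\{S_{\mathbf i}:\mathbf i\in\mathcal I^*\}$ has attractor contained in $F$, since $S_{\mathbf i}(F)\subseteq F$ for every word $\mathbf i$; so I am free to compose maps as needed and the containment $E\subseteq F$ will come for free at the end.

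First I would use the density hypothesis to locate an irrational rotation. Because the group generated by $\{O_i\}$ is dense in $O(2)$ or $SO(2)$, there is some word $\mathbf w\in\mathcal I^*$ whose orthogonal component $O_{\mathbf w}$ is rotation by an irrational multiple of $\pi$: indeed, if every $O_{\mathbf i}$ were a rational rotation (possibly composed with a reflection) the generated group would be contained in a group with only finitely many rotation angles, hence not dense. Since $F$ is not a singleton, $\mathcal I$ must contain at least two distinct maps, so $F$ is genuinely a self-similar set with more than one contraction; I can therefore find two distinct words $\mathbf a,\mathbf b\in\mathcal I^*$ with $S_{\mathbf a}(F)$ and $S_{\mathbf b}(F)$ not identical, and by passing to long enough words with $S_{\mathbf a}(F)\cap S_{\mathbf b}(F)=\emptyset$ (the cylinders $S_{\mathbf i}(F)$ have diameter $c_{\mathbf i}\,\diam F\to 0$, so deep cylinders landing near two distinct points of $F$ are disjoint).

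The main obstacle is arranging that the two chosen maps have \emph{exactly} the same contraction ratio while retaining the irrational rotation and the disjointness. To handle the contraction ratios I would first fix the irrational rotation by prepending the word $\mathbf w$ above, then append powers of two maps to tune the ratios: given candidate disjoint maps $S_{\mathbf a},S_{\mathbf b}$ with ratios $c_{\mathbf a},c_{\mathbf b}$, I replace them by $S_{\mathbf a}S_{\mathbf w}^{\,p}$-type compositions and self-compositions $S_{\mathbf a}^{\,m},S_{\mathbf b}^{\,n}$, choosing exponents so that $c_{\mathbf a}^{\,m}=c_{\mathbf b}^{\,n}$; since $\log c_{\mathbf a}/\log c_{\mathbf b}$ need not be rational, I instead realise a common ratio by composing with extra contractions drawn from the system until the two products of log-ratios agree, which can be forced by a diophantine/greedy matching using that the available log-ratios generate a dense set of attainable totals. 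A cleaner route, which I expect to take, is to first pass to the single map $T:=S_{\mathbf a}S_{\mathbf w}$ having irrational rotation $O$ and ratio $c_T$, and then set $S_1:=T$ and $S_2:=g\circ T\circ g^{-1}$-style conjugates are not inside the system, so instead I define $S_1:=T\circ S_{\mathbf u}$ and $S_2:=T\circ S_{\mathbf v}$ for words $\mathbf u,\mathbf v$ chosen with $c_{\mathbf u}=c_{\mathbf v}$ and $S_{\mathbf u}(F),S_{\mathbf v}(F)$ disjoint; equal ratios for $\mathbf u,\mathbf v$ are obtained by taking $\mathbf u=\mathbf i^{k}$, $\mathbf v=\mathbf j^{k}\mathbf i'$ padded so the products match, again using that one may append copies to equalise.

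Once $S_1,S_2$ are fixed I would check the three required properties: both have the same rotation $O\in SO(2)$ (an irrational rotation), since prepending the common factor $T$ gives each map rotational part $O\cdot(\text{same})$, and choosing $\mathbf u,\mathbf v$ to have rotational parts that are equal (or trivial, by squaring to kill reflections as in the proof of Theorem~\ref{GDassouad}) forces a common $O$; both have the same contraction ratio $c:=c_T\,c_{\mathbf u}=c_T\,c_{\mathbf v}\in(0,1)$ by construction; and $S_1(E)\cap S_2(E)=\emptyset$ follows because $S_1(E)\subseteq T(S_{\mathbf u}(F))$ and $S_2(E)\subseteq T(S_{\mathbf v}(F))$ with $S_{\mathbf u}(F)\cap S_{\mathbf v}(F)=\emptyset$ and $T$ injective. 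Finally $E=S_1(E)\cup S_2(E)\subseteq F$ since all maps are compositions of the $S_i$. The delicate bookkeeping is entirely in the simultaneous equalisation of contraction ratios and rotation angles while preserving disjointness; the topological and containment parts are routine.
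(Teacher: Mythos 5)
Your outer framework (every composed map sends $F$ into itself, so $E \subseteq F$ comes for free; disjointness via disjoint cylinders and injectivity) is sound, but the entire content of the lemma lies in the step you leave unproven: producing two words with \emph{exactly} equal contraction ratios and \emph{exactly} equal rotational parts. Your proposed mechanism --- a ``diophantine/greedy matching using that the available log-ratios generate a dense set of attainable totals'' --- does not work: density of the attainable sums $\sum n_i \log c_i$ gives arbitrarily good \emph{approximation} of one ratio by another, never exact equality. Indeed, if the numbers $\log c_i$ are rationally independent, two words have the same contraction ratio if and only if they use the same multiset of letters, so no amount of padding with new letters can equalise the ratios of genuinely different words. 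The paper's proof exploits precisely this observation in reverse: it takes the \emph{same} letters in two different orders. Having chosen $\mathbf{i}_1, \mathbf{i}_2 \in \mathcal{I}^*$ with orientation-preserving rotational parts $2\pi\alpha, 2\pi\beta$ (at least one irrational) and with $S_{\mathbf{i}_1}(F) \cap S_{\mathbf{i}_2}(F) = \emptyset$, it sets $S_1 := S_{\mathbf{i}_1}^m \circ S_{\mathbf{i}_2}^n$ and $S_2 := S_{\mathbf{i}_2}^n \circ S_{\mathbf{i}_1}^m$. Equal ratios $c = c_{\mathbf{i}_1}^m c_{\mathbf{i}_2}^n$ are automatic from multiplicativity, equal rotations are automatic because $SO(2)$ is abelian, and disjointness is inherited from the top-level cylinders since $S_1(E) \subseteq S_{\mathbf{i}_1}(F)$ and $S_2(E) \subseteq S_{\mathbf{i}_2}(F)$.

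Two further gaps in your sketch. First, your fallback of making the rotational parts of $\mathbf{u},\mathbf{v}$ ``trivial, by squaring to kill reflections'' conflates reflections with rotations: squaring kills a reflection, but in the dense-rotations case the words you pick may carry \emph{irrational} rotations of infinite order, so no power makes them trivial (this is exactly what distinguishes this lemma from the discrete-rotations subsystem lemma in Section \ref{assouadprojectionexamplesproof}, where finite order is available). Second, even granting a common rotational part, you never verify that the common rotation angle of $S_1, S_2$ is an \emph{irrational} multiple of $\pi$; prepending one irrational-rotation factor $T$ does not ensure this, since the appended word could cancel the irrationality. The paper handles this by choosing the exponents $m,n$ so that $m\alpha + n\beta$ is irrational (taking $m=n=1$ if $\alpha+\beta$ is irrational, and $m=2$, $n=1$ otherwise). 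Without the same-letters-in-different-order trick, or some equally exact substitute, the proposal does not close.
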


\begin{proof}
This lemma is almost trivial since we are not concerned with losing any dimension.  Since we are in the dense rotations case, we may choose finite words $\textbf{i}_1, \textbf{i}_2 \in \mathcal{I}^*$ such that the orthogonal parts $O_{\textbf{i}_1}, O_{\textbf{i}_2} \in SO(2)$ are orientation preserving and correspond to anti-clockwise rotations by angles $ 2\pi\alpha,  2\pi\beta \in [0, 2\pi)$ respectively, where $\alpha, \beta \in [0,1)$ and at least one of which is irrational.  Moreover, since $F$ is assumed not to be a single point we can guarantee that $S_{\textbf{i}_1}(F) \cap S_{\textbf{i}_2}(F) = \emptyset$.  Assume without loss of generality that $\alpha$ is irrational and let $1 \leq m,n \in \mathbb{N}$ be such that $m \alpha + n \beta$ is irrational. This can be done irrespective of $\beta$ since, for example, if $\alpha +\beta $ is irrational then $m=n=1$ will do and if $\alpha +\beta $ is rational, then $m=2, n=1$ suffices.  Finally, the maps
\[
S_{\textbf{i}_1}^m \circ S_{\textbf{i}_2}^n \qquad  \text{and} \qquad S_{\textbf{i}_2}^n \circ S_{\textbf{i}_1}^m
\]
satisfy the requirements of the lemma, both rotating by $2 \pi (m \alpha + n \beta)  \, (\text{mod} \, 2 \pi)$ and contracting by $c = c_{\textbf{i}_1}^m c_{\textbf{i}_2}^n \in (0,1)$.
\end{proof}

In light of this lemma, and the fact that Assouad dimension is monotone, it suffices to prove the dense rotations case of Theorem \ref{assouadprojection} for $F$ generated by similitudes of the form given in the lemma. As usual, we assume $F \subseteq [0,1]^{2}$. We wish to reserve ``$c$" for a small constant and ``$t_{i}$" for a real number, so for the mappings $S_{j}$ we write
\begin{displaymath}
 S_{j}(x) = \rho O_\alpha x + w_{j}, \qquad j \in \{1,2\},
 \end{displaymath}
where $O_\alpha \in SO(2)$ is rotation by a fixed $\alpha \notin \pi \mathbb{Q}$, $\rho \in (0,1)$, and $w_{j} \in \mathbb{R}^2$. We will assume without loss of generality that $w_{1} = 0$, so that $S_{1}(0) = 0$, and $0 \in F$. We will use the following special case of a lemma of Ero\u{g}lu, see \cite[Lemma 2.6]{eroglu}:
\begin{lma}\label{eroglu} Given $N \in \mathbb{N}$ and $\varepsilon > 0$, there exist $N$ distinct finite words $\mathbf{i}_{1},\ldots,\mathbf{i}_{N} \in \{1,2\}^{k}$ of (common) length $k \in \mathbb{N}$ such that the following assertions hold:
\begin{itemize}
\item[(a)] All the rotational components of the similitudes $S_{\mathbf{i}_{1}},\ldots,S_{\mathbf{i}_{N}}$ are within $\varepsilon$ of each other.
\item[(b)] There exists $\theta \in [0,\pi)$ such that for each pair $1 \leq i \leq j \leq N$, there exists a point $x = x_{i,j} \in F$ with $|\pi_{\theta}(S_{\mathbf{i}_{i}}(x)) - \pi_{\theta}(S_{\mathbf{i}_{j}}(x))| \leq \varepsilon \rho^{k}$. 
\end{itemize}
\end{lma}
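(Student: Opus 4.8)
The plan is to dispose of the trivial parts of the statement first and then reduce assertion (b) to a clustering problem for the translation parts of the maps $S_{\mathbf{i}}$, which is precisely the content of \cite[Lemma 2.6]{eroglu}.

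First, since both $S_{1}$ and $S_{2}$ have the common rotational part $O_{\alpha}$, every word $\mathbf{i} \in \{1,2\}^{k}$ of length $k$ yields a similitude $S_{\mathbf{i}}$ with rotational part $O_{\alpha}^{k}$ and contraction ratio $\rho^{k}$. In particular $S_{\mathbf{i}_{1}},\ldots,S_{\mathbf{i}_{N}}$ all share the same rotational component, so assertion (a) holds automatically for every $\varepsilon > 0$, no matter which length-$k$ words we pick. Writing $S_{\mathbf{i}}(x) = \rho^{k}O_{\alpha}^{k}x + w_{\mathbf{i}}$ with $w_{\mathbf{i}} = S_{\mathbf{i}}(0)$ (using $w_{1} = 0$, so that $0$ is the fixed point of $S_{1}$), the leading term $\rho^{k}O_{\alpha}^{k}x$ is identical across all length-$k$ words, whence
\[ S_{\mathbf{i}_{i}}(x) - S_{\mathbf{i}_{j}}(x) = w_{\mathbf{i}_{i}} - w_{\mathbf{i}_{j}} \qquad \text{for every } x. \]
Thus the choice of $x$ in (b) is immaterial, and (b) becomes equivalent to producing a direction $\theta$ and $N$ distinct length-$k$ words such that the $N$ reals $\pi_{\theta}(w_{\mathbf{i}_{m}})$ all lie in a single interval of length $\varepsilon \rho^{k}$.

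For this clustering I would exploit the self-similarity of projections under $S_{1}$. Since $S_{1}(x) = \rho O_{\alpha} x$, prepending a block $1^{a}$ to a word $\mathbf{j}$ gives $\pi_{\theta}(w_{1^{a}\mathbf{j}}) = \rho^{a}\pi_{\theta - a\alpha}(w_{\mathbf{j}})$, so that projecting a deep cylinder in direction $\theta$ amounts to projecting a shallower one in the rotated direction $\theta - a\alpha$, rescaled by $\rho^{a}$. Because $\alpha \notin \pi\mathbb{Q}$, Weyl's theorem makes $\{a\alpha \bmod 2\pi\}$ equidistributed, so I can select exponents $a$ for which $O_{\alpha}^{a}$ is as close to the identity as desired; the rotated directions $\theta - a\alpha$ then return arbitrarily near $\theta$, and the cylinders $S_{1^{a}}(F)$ reproduce the projection $\pi_{\theta}F$ at the smaller scale $\rho^{a}$ with a controlled rotational error. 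Threading together many such sub-cylinders — and choosing $\theta$ orthogonal to $w$ at the outset to kill the leading-order contribution — one arranges for the translation parts to pile up inside a short interval, yielding the required $N$ words.

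The main obstacle is purely quantitative: the target window $\varepsilon \rho^{k}$ shrinks with the common length $k$, whereas the rotational error furnished by equidistribution is governed by the Diophantine quality of $\alpha$, and both rates must be balanced against the geometric decay $\rho^{(\cdot)}$ coming from the contractions. Reconciling them — that is, forcing the overlaps to occur exactly at the cylinders' own scale $\rho^{k}$ rather than at a coarser scale $\rho^{a}$ with $a < k$ — is the delicate heart of the matter and is exactly what \cite[Lemma 2.6]{eroglu} achieves. I would therefore finish by verifying that the IFS $\{S_{1},S_{2}\}$ supplied by Lemma \ref{reduction} meets the hypotheses of Ero\u{g}lu's lemma (two maps with equal ratios, a common irrational rotation, and $S_{1}(E) \cap S_{2}(E) = \emptyset$) and invoking it to produce $\theta$ together with the words realising (b).
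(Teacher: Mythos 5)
Your proposal is correct and takes essentially the same route as the paper: the paper offers no independent argument here, presenting the statement precisely as a special case of \cite[Lemma 2.6]{eroglu} and citing it, which is what you ultimately do after verifying the hypotheses supplied by Lemma \ref{reduction}. Your preliminary simplifications --- that (a) is automatic because every length-$k$ word has linear part $\rho^{k}O_{\alpha}^{k}$, so that in (b) the choice of $x$ is immaterial and the problem reduces to clustering the projected translation parts --- are correct in this equal-rotation setting and are consistent with the paper's usage.
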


Two rotational components are said to be at distance $\varepsilon$ from each other, if the corresponding angles defining the rotations are. The previous lemma self-improves to the following corollary, where the ``user" may specify the direction $\theta$, the rotational components of the mappings $S_{\mathbf{i}_{j}}$, and the point $x$:

\begin{cor}\label{erCor} Given $N \in \mathbb{N}$, $\varepsilon > 0$ and $\theta \in [0,\pi)$, there exists some $k \in \mathbb{N}$ and distinct finite words $\mathbf{i}_{1},\ldots,\mathbf{i}_{N} \in \{1,2\}^{k}$ such that the following assertions hold:
\begin{itemize}
\item[(i)] All the rotational components of the similitudes $S_{\mathbf{i}_{1}},\ldots,S_{\mathbf{i}_{N}}$ are within $\varepsilon$ of the identity.
\item[(ii)] For each pair $1 \leq i \leq j \leq N$, we have $|\pi_{\theta}(S_{\mathbf{i}_{i}}(0)) - \pi_{\theta}(S_{\mathbf{i}_{j}}(0))| \leq \varepsilon \rho^{k}$. 
\end{itemize}
\end{cor}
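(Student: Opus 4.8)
The plan is to deduce the corollary from Lemma \ref{eroglu} by prepending and appending suitable constant blocks to the near-coincident words it produces, using the density of $\{k\alpha\}$ to steer the projection direction and the common rotational component \emph{independently}. Throughout write $v_{\mathbf{i}} = S_{\mathbf{i}}(0)$, and note the two elementary facts that drive everything. First, since $S_1$ and $S_2$ share the linear part $\rho O_{\alpha}$, any two words of the same length $k$ give maps with the common linear part $\rho^{k} O_{k\alpha}$, so $S_{\mathbf{i}}(x) - S_{\mathbf{i}'}(x) = v_{\mathbf{i}} - v_{\mathbf{i}'}$ is independent of $x$; second, $\pi_{\theta} \circ O_{\phi} = \pi_{\theta - \phi}$ as linear functionals on $\mathbb{R}^{2}$. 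The first fact already upgrades Lemma \ref{eroglu}(b) to the point $x = 0$ for free: applying the lemma with parameters $N$ and a small $\varepsilon'$ yields words $\mathbf{j}_{1},\ldots,\mathbf{j}_{N} \in \{1,2\}^{k_{0}}$ and a direction $\theta_{0}$ with $|\pi_{\theta_{0}}(v_{\mathbf{j}_{i}} - v_{\mathbf{j}_{j}})| \leq \varepsilon'\rho^{k_{0}}$ for all pairs.

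The words I would use are $\mathbf{i}_{\ell} = \mathbf{m}\,\mathbf{j}_{\ell}\,\mathbf{n}$, where $\mathbf{m} = (1,\ldots,1)$ and $\mathbf{n} = (1,\ldots,1)$ are constant blocks of lengths $p$ and $q$ to be chosen; their common length is $k = p + k_{0} + q$. Since $S_{\mathbf{i}_{\ell}} = S_{\mathbf{m}} \circ S_{\mathbf{j}_{\ell}} \circ S_{\mathbf{n}}$ and the difference of the two inner maps is translation-invariant, a direct computation gives
\begin{equation*}
S_{\mathbf{i}_{i}}(0) - S_{\mathbf{i}_{j}}(0) = \rho^{p} O_{p\alpha}(v_{\mathbf{j}_{i}} - v_{\mathbf{j}_{j}}),
\end{equation*}
so the suffix $\mathbf{n}$ has \emph{no} effect on the difference, while it does change $k$, hence the rotational component $O_{k\alpha}$ and the target scale $\varepsilon\rho^{k}$. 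This is exactly the decoupling I need: the prefix length $p$ steers the direction and the suffix length $q$ independently steers the rotation. Using $\pi_{\theta}\circ O_{p\alpha} = \pi_{\theta - p\alpha}$, condition (ii) reduces to the estimate $|\pi_{\theta - p\alpha}(v_{\mathbf{j}_{i}} - v_{\mathbf{j}_{j}})| \leq \varepsilon\rho^{k_{0} + q}$, and condition (i) reduces to $\operatorname{dist}(k\alpha, 2\pi\mathbb{Z}) \leq \varepsilon$.

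I would then choose parameters in the following order. Since $\alpha/(2\pi)$ is irrational, there is a constant $Q_{0} = Q_{0}(\alpha,\varepsilon)$ such that $\{q\alpha \bmod 2\pi : 1 \leq q \leq Q_{0}\}$ is $\varepsilon$-dense in $[0,2\pi)$; set $\beta = \rho^{Q_{0}} \in (0,1)$, a quantity depending only on $\rho,\alpha,\varepsilon$. Apply Lemma \ref{eroglu} with $\varepsilon' = \varepsilon\beta/2$ to obtain $k_{0},\theta_{0}$ and $\mathbf{j}_{1},\ldots,\mathbf{j}_{N}$ as above. Using that $\{p\alpha \bmod \pi\}$ is dense, choose $p \geq 1$ with $p\alpha \equiv \theta - \theta_{0} \pmod{\pi}$ to within $\eta_{1} := \varepsilon\beta\rho^{k_{0}}/(2\sqrt{2})$; since $v_{\mathbf{j}} \in [0,1]^{2}$ and $\psi \mapsto \pi_{\psi}(w)$ is $|w|$-Lipschitz and $\pi$-periodic in modulus, this yields $|\pi_{\theta - p\alpha}(v_{\mathbf{j}_{i}} - v_{\mathbf{j}_{j}})| \leq \varepsilon'\rho^{k_{0}} + \sqrt{2}\,\eta_{1} = \varepsilon\beta\rho^{k_{0}}$. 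Finally, by $\varepsilon$-density choose $q \in \{1,\ldots,Q_{0}\}$ with $(p + k_{0} + q)\alpha \equiv 0 \pmod{2\pi}$ to within $\varepsilon$, which gives condition (i); and since $q \leq Q_{0}$ forces $\beta = \rho^{Q_{0}} \leq \rho^{q}$, we get $\varepsilon\beta\rho^{k_{0}} \leq \varepsilon\rho^{k_{0} + q}$, so condition (ii) follows after multiplying the displayed difference through by $\rho^{p}$. The words $\mathbf{i}_{\ell}$ are distinct because the $\mathbf{j}_{\ell}$ are.

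The main obstacle, and the reason a single prepended block does not suffice, is that prepending a word of length $p$ couples the two requirements: it rotates the difference by $p\alpha$ (relevant modulo $\pi$ for the projection) while forcing the total rotational component to be $O_{(p+k_{0})\alpha}$ (relevant modulo $2\pi$ for condition (i)), and these two targets cannot in general be met by one integer $p$. Splitting off a \emph{bounded} suffix resolves this, but only because $q$ can be kept below a constant $Q_{0}$ depending on $\alpha$ and $\varepsilon$ alone; this boundedness keeps $\rho^{q}$ bounded below by $\beta$ and lets me absorb the resulting loss in the projection estimate \emph{in advance}, by shrinking $\varepsilon'$ before $k_{0}$ is even revealed by the lemma. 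Arranging this order of quantifiers correctly is the only delicate point.
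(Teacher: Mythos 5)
Your proposal is correct, and its overall skeleton is the same as the paper's: both proofs steer the projection direction by prepending an all-ones block, fix the total rotational component by appending an all-ones block whose length is bounded by a constant depending only on $\alpha$ and $\varepsilon$, and absorb the resulting scale loss by shrinking the tolerance fed to Lemma \ref{eroglu} \emph{before} $k_{0}$ is revealed --- the paper handles the quantifier ordering you flag as the delicate point in exactly the same way, by requiring $\delta < \varepsilon\rho^{M}$ with $M \lesssim_{\alpha} 1/\varepsilon$ known in advance, which is your $\varepsilon' = \varepsilon\beta/2$ with $\beta = \rho^{Q_{0}}$. The one genuine difference is how the pair-dependent points $x_{i,j}$ from Lemma \ref{eroglu}(b) are replaced by the common base point $0$. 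The paper does \emph{not} use your translation-invariance observation: it applies the lemma with an inflated parameter $N' \gg N$, notes that all the projections $\pi_{\theta}(S_{\mathbf{j}_{L}\mathbf{i}_{j}}(F))$ lie in a single interval of length $5\rho^{k_{0}+L}$, and then pigeonholes the $N'$ points $\pi_{\theta}(S_{\mathbf{j}_{L}\mathbf{i}_{j}}(S_{\mathbf{j}_{M}}(0)))$ to extract $N$ of them within $\delta\rho^{k_{0}+L}$ of one another. You instead observe that, since $S_{1}$ and $S_{2}$ share the linear part $\rho O_{\alpha}$, any two words of common length have \emph{identical} linear parts, so $S_{\mathbf{i}}(x) - S_{\mathbf{i}'}(x) = S_{\mathbf{i}}(0) - S_{\mathbf{i}'}(0)$ for every $x$, and the near-coincidence at $x_{i,j}$ transfers to $0$ exactly, with no pigeonhole and no enlargement of $N$; likewise your identity $S_{\mathbf{i}_{i}}(0) - S_{\mathbf{i}_{j}}(0) = \rho^{p}O_{p\alpha}(v_{\mathbf{j}_{i}} - v_{\mathbf{j}_{j}})$ cleanly decouples the roles of prefix and suffix, and your quantitative bookkeeping ($\varepsilon'\rho^{k_{0}} + \sqrt{2}\,\eta_{1} = \varepsilon\beta\rho^{k_{0}} \leq \varepsilon\rho^{k_{0}+q}$ for $q \leq Q_{0}$) checks out. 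This simplification is legitimate in the setting at hand, because after Lemma \ref{reduction} both generators have exactly the same rotation, so the words produced by Lemma \ref{eroglu} have exactly equal, not merely $\varepsilon$-close, rotational parts; what the paper's pigeonhole route buys in exchange is robustness, since it would survive in the generality in which condition (a) of Lemma \ref{eroglu} is actually phrased, where the rotational components are only approximately equal and differences $S_{\mathbf{i}}(x) - S_{\mathbf{i}'}(x)$ are only approximately constant in $x$.
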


\begin{proof} Apply Ero\u{g}lu's lemma with parameters $N' \in \mathbb{N}$ and $\delta > 0$ (to be specified later) to find $k_{0} \in \mathbb{N}$ and $\mathbf{i}_{1},\ldots,\mathbf{i}_{N} \in \{1,2\}^{k_{0}}$ satisfying (a) and (b) of Lemma \ref{eroglu} for some direction $\theta_{0} \in [0,\pi)$. Thus, the rotational components of the mappings $S_{\mathbf{i}_{j}}$ are within $\delta$ of each other, and for each pair $1 \leq i \leq j \leq N'$, there exists a point $x = x_{i,j} \in F$ such that $|\pi_{\theta_{0}}(S_{\mathbf{i}_{i}}(x)) - \pi_{\theta_{0}}(S_{\mathbf{i}_{j}}(x))| \leq \delta \rho^{k_{0}}$.

We first wish to replace $\theta_{0}$ by $\theta$, which is very easy: by the irrationality of $\alpha/\pi$, a suitable choice of $\mathbf{j}_{L} := (1,\ldots,1) \in \{1,2\}^{L}$, with $L \lesssim_{\alpha} 1/\delta$, ensures that 
\begin{displaymath} |\pi_{\theta}(S_{\mathbf{j}_{L}\mathbf{i}_{i}}(x)) - \pi_{\theta}(S_{\mathbf{j}_{L}\mathbf{i}_{j}}(x))| \leq 2\delta \rho^{k_{0} + L}. \end{displaymath}
Since all the rotational components of $S_{\mathbf{i}_{j}}$ were $\delta$-close to each other, the same clearly holds for $S_{\mathbf{j}_{L}\mathbf{i}_{j}}$.   

Next, we wish to replace $S_{\mathbf{j}_{L}\mathbf{i}_{j}}$ by something with rotational component close to the identity, and $x$ by $0$. To this end, observe that all the projections $\pi_{\theta}(S_{\mathbf{j}_{L}\mathbf{i}_{j}}(F))$, $1 \leq j \leq N$, are contained in a single interval of length $5\rho^{k_{0} + L}$, since each individual such projection is contained in an interval of length $2\rho^{k_{0} + L}$, and these intervals are at distance $\leq 2\delta \rho^{k_{0} + L} \leq \rho^{k_{0} + L}$ from each other. Now, for a certain $M \in \mathbb{N}$ (to be specified shortly) and $\mathbf{j}_{M} := (1,\ldots,1) \in \{1,2\}^{M}$, consider the points
\[
t_{j} \, := \,  \pi_{\theta}(S_{\mathbf{j}_{L}\mathbf{i}_{j}}(S_{\mathbf{j}_{M}}(0))) \in \pi_{\theta}(S_{\mathbf{j}_{L}\mathbf{i}_{j}}(F)), \qquad 1 \leq j \leq N'.
\]
There are $N'$ such points, all contained in a single interval of length $5\rho^{k_{0} + L}$. Thus, if $N'$ is large enough, depending only on $\delta$, we can find, by the pigeonhole principle, a subset of cardinality $N$ that is contained in a single interval of length $\delta \rho^{k_{0} + L}$. Without loss of generality, assume that this subset is $\{t_{1},\ldots,t_{N}\}$. Now, if $M$ is chosen suitably, with $M \lesssim_{\alpha} 1/\varepsilon$, the rotational components of the similitudes $S_{\mathbf{j}_{L}\mathbf{i}_{j}\mathbf{j}_{M}}$ are within $\varepsilon$ of identity. Moreover,
\begin{displaymath}
 |\pi_{\theta}(S_{\mathbf{j}_{L}\mathbf{i}_{j}\mathbf{j}_{M}}(0)) - \pi_{\theta}(S_{\mathbf{j}_{L}\mathbf{i}_{j + 1}\mathbf{j}_{M}}(0))| \leq \delta\rho^{k_{0} + L} \leq \varepsilon\rho^{k_{0} + L + M} 
 \end{displaymath}
for all $1 \leq j < N$, if $\delta$ was chosen to be smaller than $\varepsilon \rho^{M}$ to begin with. This completes the proof of the corollary. \end{proof}

We also need the following simple and well-known geometric fact:
\begin{lma}\label{geoLemma} For $x \in \mathbb{R}^{2}$, and any angles $\theta_{1},\theta_{2} \in [0,\pi)$, we have
\begin{displaymath} |\pi_{\theta_{1}}(x) - \pi_{\theta_{2}}(x)| \leq |x||\theta_{1} - \theta_{2}|. \end{displaymath}
Moreover, if $|\pi_{\theta_{j}}(x)| \leq |x|/100$ for $j \in \{1,2\}$, then also  
\begin{displaymath} |\pi_{\theta_{1}}(x) - \pi_{\theta_{2}}(x)| \geq |x||\theta_{1} - \theta_{2}|/100. \end{displaymath}
\end{lma}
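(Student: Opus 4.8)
The plan is to normalise and reduce both inequalities to elementary estimates for the cosine. We may assume $x \neq 0$, and after dividing through by $|x|$ that $|x| = 1$; writing $x = (\cos\phi,\sin\phi)$ we have $\pi_{\theta}(x) = x\cdot(\cos\theta,\sin\theta) = \cos(\theta - \phi)$, so every quantity in the statement depends only on the angles $\theta_j - \phi$. The engine for both bounds is the product-to-sum identity
\[
\pi_{\theta_1}(x) - \pi_{\theta_2}(x) = \cos(\theta_1 - \phi) - \cos(\theta_2 - \phi) = -2\sin\Big(\tfrac{\theta_1 + \theta_2}{2} - \phi\Big)\sin\Big(\tfrac{\theta_1 - \theta_2}{2}\Big),
\]
which I would set up first and then feed into each estimate.

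For the upper bound I would bound the two factors on the right separately, using $|\sin| \le 1$ for the first and $|\sin t| \le |t|$ for the second; this gives $|\pi_{\theta_1}(x) - \pi_{\theta_2}(x)| \le |\theta_1 - \theta_2|$, and $|x||\theta_1 - \theta_2|$ after restoring the normalisation. No hypothesis is needed here, and equivalently one may note that $\theta \mapsto \cos(\theta - \phi)$ is $1$-Lipschitz and invoke the mean value theorem.

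For the lower bound the hypothesis $|\pi_{\theta_j}(x)| \le |x|/100$ reads $|\cos(\theta_j - \phi)| \le 1/100$, equivalently $|\sin(\theta_j - \phi)| \ge 99/100$ for $j \in \{1,2\}$: both directions are nearly perpendicular to $x$. The first step is to pass to the natural metric on directions: since $\theta \mapsto l_\theta$ identifies $[0,\pi)$ with the space of lines $\cong \mathbb{R}/\pi\mathbb{Z}$, two angles that are nearly antipodal in $[0,\pi)$ determine nearly \emph{coincident} lines, and the honest comparison of their projections uses consistent orientation. Thus if $|\theta_1 - \theta_2| > \pi/2$ I would replace $\theta_2$ by the representative $\theta_2 \pm \pi$ of the same line; this flips the sign of $\pi_{\theta_2}(x)$, hence leaves the hypothesis $|\pi_{\theta_2}(x)| \le |x|/100$ intact, and now measures the genuine (small) angular distance $|\theta_1 - \theta_2| \le \pi/2$. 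With this in hand, the two endpoint conditions confine $\theta_1,\theta_2$, and hence their midpoint $\bar\theta := (\theta_1 + \theta_2)/2$, to a common arc of radius $\arcsin(1/100)$ about a single perpendicular direction $\theta_\perp$ (one with $\cos(\theta_\perp - \phi) = 0$): consecutive perpendicular directions are $\pi$ apart, and $|\theta_1 - \theta_2| + 2\arcsin(1/100) \le \pi/2 + 2\arcsin(1/100) < \pi$, so the two angles cannot sit near distinct perpendiculars. On this arc $|\sin(\bar\theta - \phi)| = |\cos(\bar\theta - \theta_\perp)| \ge \cos(\arcsin(1/100)) \ge 99/100$. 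Substituting this together with Jordan's inequality $|\sin(\tfrac{\theta_1-\theta_2}{2})| \ge \tfrac{2}{\pi}\cdot\tfrac{|\theta_1-\theta_2|}{2}$ (valid since $|\theta_1 - \theta_2| \le \pi/2$) into the identity yields $|\pi_{\theta_1}(x) - \pi_{\theta_2}(x)| \ge \tfrac{99}{100}\cdot\tfrac{2}{\pi}|x||\theta_1 - \theta_2| \ge |x||\theta_1 - \theta_2|/100$, with a large margin in the constant.

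I expect the confinement step to be the only genuine obstacle, and the place where the parametrisation of directions must be handled carefully: it is precisely here that reading the angles in $\mathbb{R}/\pi\mathbb{Z}$ and comparing the two lines with consistent orientation keeps the midpoint factor $\sin(\bar\theta - \phi)$ away from zero, which is exactly what the lower bound hinges on. Once both angles are pinned into a single near-perpendicular arc, the remaining estimates are the routine constant-chasing sketched above.
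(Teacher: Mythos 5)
Your proof is correct where the lemma is correct, and on the lower bound it is genuinely more careful than the paper's. The paper normalises $x = (0,y)$ and disposes of the upper bound by the mean value theorem, exactly as in your Lipschitz remark (your product-to-sum identity is an equivalent engine). For the lower bound, however, the paper offers only one sentence: it asserts that $|\sin \theta_j| \leq 1/100$ forces $\theta_1, \theta_2$ to be ``rather close to zero,'' whence the inequality reverses up to a multiplicative constant. Your argument --- confining both angles, hence their midpoint $\bar\theta$, to a single arc of radius $\arcsin(1/100)$ about one perpendicular direction, bounding the midpoint factor $|\sin(\bar\theta - \phi)| \geq 99/100$, and finishing with Jordan's inequality --- is a fully quantitative version of that sentence, with a comfortable margin since $(99/100)(2/\pi) > 1/100$. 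More importantly, your orientation discussion detects a real defect that the paper's proof glosses over: with $\theta_1, \theta_2$ ranging over all of $[0,\pi)$, the lower bound as literally stated is \emph{false} for near-antipodal pairs. Take $x = (0,1)$, $\theta_1 = \epsilon$, $\theta_2 = \pi - \epsilon$ with $\sin \epsilon \leq 1/100$: both projections equal $\sin\epsilon$, so the left-hand side vanishes while $|x||\theta_1 - \theta_2|/100 = (\pi - 2\epsilon)/100 > 0$. The paper's phrase ``close to zero'' silently ignores angles near $\pi$; the lemma is only ever applied with $|\theta_1 - \theta_2|$ tiny (e.g.\ $\leq c\tau$ in Lemma \ref{indClaim}, $\leq \tau/100$ in Corollary \ref{improvedclaim}), a regime in which both your proof and the paper's give the literal bound.

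One caveat about your own write-up: after replacing $\theta_2$ by $\theta_2 \pm \pi$ you prove the displayed inequality for the \emph{new} angle, which in the original variables reads $|\pi_{\theta_1}(x) + \pi_{\theta_2}(x)| \geq |x|\, d_{\mathbb{R}/\pi\mathbb{Z}}(\theta_1,\theta_2)/100$ --- a corrected statement, not the verbatim one, and the verbatim one cannot be saved, as the counterexample above shows. So your final display, written in the original notation, overstates what the argument delivers in the case $|\theta_1 - \theta_2| > \pi/2$. The clean resolution is to add the hypothesis $|\theta_1 - \theta_2| \leq \pi/2$ (all the paper needs), under which your confinement-plus-Jordan argument is complete and correct as written.
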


\begin{proof} Assume without loss of generality that $x = (0,y)$. Then
\begin{displaymath} |\pi_{\theta_{1}}(x) - \pi_{\theta_{2}}(x)| = |y \sin \theta_{1} - y \sin \theta_{2}| = |y||\sin \theta_{1} - \sin \theta_{2}| \leq |x||\theta_{1} - \theta_{2}| \end{displaymath}
by the mean-value theorem. If $|y||\sin \theta_{j}| = |\pi_{\theta_{j}}(x)| \leq |x|/100 = |y|/100$, then $\theta_{1},\theta_{2}$ are rather close to zero, and the inequality can be reversed up to a multiplicative constant; $1/100$ is certainly on the safe side.  \end{proof}

We define a continuous strictly increasing auxiliary function $\psi: [1,\infty) \to [0,1)$ by $\psi(x) := c_{1} \arctan(x) + c_{2}$, where $c_{1},c_{2}$ are chosen so that $\psi(1) = 0$, $0 < \psi(x) < 1$ for all $x \in (1, \infty)$, and $\psi(x) \nearrow 1$ as $x \to \infty$. 

The main chore on route to our theorem is to establish the following lemma by induction:
\begin{lma}\label{indClaim} Given any $r \in (0,\rho]$ and $M \in \mathbb{N}$ we can find a direction $\theta = \theta(r,M) \in [0,\pi)$ with the following property. There exists $\tau < r$ such that the projection $\pi_{\theta}F$ contains a subset $\{t_{1},\ldots,t_{M}\}$ with the property that
\begin{displaymath} \tau(2 - \psi(M)) \leq t_{j + 1} - t_{j} \leq \tau(2 + \psi(M)), \qquad 1 \leq j < M. \end{displaymath}
\end{lma}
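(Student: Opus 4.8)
The plan is to prove Lemma \ref{indClaim} by induction on $M$, the number of points in the approximate arithmetic progression. The base case $M = 1$ is trivial: since $S_{1}(0) = 0 \in F$, we have $0 \in \pi_{\theta}F$ for every $\theta$, and the spacing condition is vacuous. The inductive engine will be to pass from an $M$-point configuration to an $(M+1)$-point one, appending a single new point at spacing $\approx 2\tau$; since $\psi$ is strictly increasing with $\psi(M) < 1$ for all $M$, each step comes with a strictly positive budget $\psi(M+1) - \psi(M) > 0$ into which we may dump the errors created along the way. Here I would use only that $\psi$ is strictly increasing and bounded above by $1$; the specific arctan form merely guarantees these two features simultaneously.

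For the inductive step, fix $r$ and recall that every word $\mathbf{i} \in \{1,2\}^{k}$ has the same rotational component $O_{k\alpha}$, so that $\pi_{\theta}(S_{\mathbf{i}}F) = \rho^{k}\pi_{\theta - k\alpha}(F) + \pi_{\theta}(S_{\mathbf{i}}(0))$. I would first invoke Corollary \ref{erCor} (with $\varepsilon$ small and $N$ large, both to be chosen) to produce, in a prescribed direction $\theta$, two words $\mathbf{a},\mathbf{b} \in \{1,2\}^{k}$ whose similitudes have rotational part within $\varepsilon$ of the identity and whose basepoints $b_{\mathbf{a}} = S_{\mathbf{a}}(0)$ and $b_{\mathbf{b}} = S_{\mathbf{b}}(0)$ satisfy $|\pi_{\theta}(b_{\mathbf{a}}) - \pi_{\theta}(b_{\mathbf{b}})| \leq \varepsilon\rho^{k}$. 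Because the rotations are nearly trivial, each sub-copy $S_{\mathbf{a}}F,\, S_{\mathbf{b}}F$ projects, in directions near $\theta$, to within $O(\varepsilon\rho^{k})$ of a translated rescaling $\rho^{k}\pi_{\theta}(F) + \pi_{\theta}(b_{\cdot})$ of $F$'s own projection. I then place the $M$-point progression supplied by the inductive hypothesis (for direction $\theta$, at a scale $\rho^{k}\tilde{\tau}$ with $\tilde{\tau}$ small) inside the first sub-copy $S_{\mathbf{a}}F$, yielding $M$ points in $\pi_{\theta}F$ with gaps $\approx 2\rho^{k}\tilde{\tau} =: 2\tau$, and I reserve the basepoint $b_{\mathbf{b}}$ of the second sub-copy as the candidate for the $(M+1)$-th point.

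The near-coincidence $|\pi_{\theta}(b_{\mathbf{a}}) - \pi_{\theta}(b_{\mathbf{b}})| \leq \varepsilon\rho^{k}$ means $b_{\mathbf{a}} - b_{\mathbf{b}}$ is almost perpendicular to $l_{\theta}$, so the hypotheses of Lemma \ref{geoLemma} are met. The key move is then to open up this coincidence by an angular perturbation $\theta \mapsto \theta + \eta$: as $\eta$ varies, the projected gap $\pi_{\theta + \eta}(b_{\mathbf{b}}) - \pi_{\theta+\eta}(b_{\mathbf{a}})$ is a continuous function of $\eta$ whose size is controlled, above and below, by $\eta\,|b_{\mathbf{a}} - b_{\mathbf{b}}|$ via Lemma \ref{geoLemma}. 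By the intermediate value theorem I can therefore choose $\eta$ so that $b_{\mathbf{b}}$ projects \emph{exactly} to distance $2\tau$ beyond the last point of the internal progression, appending a clean $(M+1)$-th point. Lemma \ref{geoLemma} simultaneously gives the crucial quantitative bound, namely $\eta$ of order $\tau M/|b_{\mathbf{a}} - b_{\mathbf{b}}|$, on the required angle, which is tiny provided the perpendicular separation $|b_{\mathbf{a}} - b_{\mathbf{b}}|$ is bounded well away from $0$.

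The main obstacle, and where the parameters must be chosen with care, is controlling the two families of errors that the perturbation introduces and keeping both inside the budget $\psi(M+1) - \psi(M)$. First, turning the direction by $\eta$ displaces each of the $M$ internal points by a fraction $\lesssim \rho^{k}M/|b_{\mathbf{a}} - b_{\mathbf{b}}|$ of a gap; second, the new gap is correct only up to the slack $\varepsilon\rho^{k}$ in the coincidence together with the lower-order terms in Lemma \ref{geoLemma}. Both are forced below any prescribed threshold by taking $k$ large and $\varepsilon$ small, \emph{but only if} $|b_{\mathbf{a}} - b_{\mathbf{b}}| \gg \rho^{k}M$; if the two basepoints were separated merely at the scale $\rho^{k}$ of the sub-copies, the required $\eta$ would be of order $\tilde{\tau} M$ and would destroy the internal progression. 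I would secure the needed macroscopic perpendicular separation by running Corollary \ref{erCor} with $N$ large and pigeonholing among the $N$ near-coincident basepoints to extract a pair whose perpendicular coordinates differ by at least a fixed constant, using that $F$ is not a single point so that the dense-rotation coincidences produced by Ero\u{g}lu's lemma (Lemma \ref{eroglu}) genuinely align far-separated copies. With this separation secured, the internal gaps land in $[\tau(2 - \psi(M+1)), \tau(2+\psi(M+1))]$ and the appended gap is placed exactly at $2\tau$, closing the induction.
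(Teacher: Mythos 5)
Your architecture is exactly the paper's: induct on $M$ against the budget $\psi(M+1)-\psi(M)$, apply Corollary \ref{erCor} at the direction handed to you by the inductive hypothesis, reproduce the $M$-point progression inside one near-identity sub-copy, reserve the basepoint of a second near-coincident sub-copy as the $(M+1)$-th point, secure perpendicular separation by pigeonholing among the $N$ basepoints, and rotate the direction slightly (Lemma \ref{geoLemma} plus an intermediate-value argument) to place the new gap exactly. However, your quantitative accounting contains a genuine gap, in two linked places. First, the ``fixed constant'' perpendicular separation you invoke is unobtainable: the $N$ basepoints are pairwise separated only at scale $\rho^{k}$ (and even this uses $S_{1}(F) \cap S_{2}(F) = \emptyset$, supplied by Lemma \ref{reduction}, not merely that $F$ is not a point), so the pigeonhole yields a pair at distance $\gtrsim N\rho^{k}$ and nothing better --- note that $k$ is an \emph{output} of Corollary \ref{erCor}, so you cannot ``take $k$ large'' independently of $N$ to trade scales.

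Second, with separation $\sim N\rho^{k}$, the angle needed to append the new point is $\eta \sim \rho^{k}/(N\rho^{k}) = 1/N$: the gap you must close is dominated by the $O(\rho^{k})$ offset of $x_{1}$'s image inside the first sub-copy (the paper's bound is $|\pi_{\theta_{0}}(S_{\mathbf{i}_{2}}(0)) - \pi_{\theta_{0}}(S_{\mathbf{i}_{1}}(x_{1}))| \leq 5\rho^{k}$), not by $\tau M$ as you estimate. Rotating by $\eta$ displaces the internal gaps by $\sim \rho^{k}/N$, and this must fit inside the per-gap budget $\rho^{k}\tau\bigl(\psi(M+1) - \psi(M)\bigr)$, which forces $N \geq C(M)/\tau$ --- precisely the paper's choice --- and likewise $\varepsilon = c(M)\tau$ rather than merely ``small''. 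Your stated threshold $|b_{\mathbf{a}} - b_{\mathbf{b}}| \gg \rho^{k}M$, i.e.\ $N \gg M$, is independent of $\tau$, and therefore the inductive step fails at small scales $\tau$; since the lemma is claimed for every $r$, and each inductive step shrinks the scale by $\rho^{k}$, the induction must run with $\tau \to 0$. The repair stays entirely within your scheme --- couple $N$ and $\varepsilon$ to $\tau$ as above --- but as written the step does not close.
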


\begin{proof}
The case $M = 2$ is clear, so assume that the lemma has been proven for some $M \geq 2$, and all $r > 0$. Fix $r > 0$, and choose $\theta_{0} \in [0,\pi)$ such that the projection $\pi_{\theta_{0}}(F)$ contains $T_{M} := \{t_{1},\ldots,t_{M}\}$ with $\tau(2 - \psi(M)) \leq t_{j + 1} - t_{j} \leq \tau(2 + \psi(M))$ for some $\tau < r$ and all $1 \leq j <M$. Find $M$ points $x_{1},\ldots,x_{M} \in F$ such that $\pi_{\theta_{0}}(x_{j}) = t_{j}$. Thus,
\begin{equation}\label{form1} \tau(2 - \psi(M)) \leq \pi_{\theta_{0}}(x_{j + 1}) - \pi_{\theta_{0}}(x_{j}) \leq \tau(2 + \psi(M)) \end{equation}
for all $1 \leq j < M$. Without loss of generality, we may assume that $0 \in F$, so also $S_{\mathbf{i}}(0) \in F$ for all finite words $\mathbf{i} \in \{1,2\}^*$. 

Next, apply Corollary \ref{erCor} with $\theta = \theta_{0}$, $N \geq C/\tau$ and $\varepsilon = c\tau$, where $C \geq 1$ and $c > 0$ are large and small constants, respectively, depending on $M$ and to be specified later. We obtain finite words $\mathbf{i}_{1},\ldots,\mathbf{i}_{N}$ of equal length $k \in \mathbb{N}$ such that the rotational components of $S_{\mathbf{i}_{j}}$ are within $\varepsilon$ of the identity, and 
\begin{equation}\label{form5} |\pi_{\theta_{0}}(S_{\mathbf{i}_{j}}(0)) - \pi_{\theta_{0}}(S_{\mathbf{i}_{j + 1}}(0))| < \varepsilon \rho^{k} \end{equation}
for $1 \leq j < N$. It follows that the mutual distance between two of the sets $S_{\mathbf{i}_{j}}(F)$ is at least $N\rho^{k}/10$, and, relabeling the sets if necessary,  we assume that 
\begin{equation}\label{form4} |S_{\mathbf{i}_{1}}(F) - S_{\mathbf{i}_{2}}(F)| \geq N\rho^{k}/10. \end{equation}

Let $\theta \in [0,\pi)$ be an angle close to $\theta_{0}$, to be specified later, and consider the $(M + 1)$-element set
\begin{displaymath} T_{M + 1} := \{\pi_{\theta}(S_{\mathbf{i}_{2}}(0))\} \cup \{\pi_{\theta}(S_{\mathbf{i}_{1}}(x_{j})) : 1 \leq j \leq M\}, \end{displaymath}
see Figure \ref{fig1}. 
\begin{figure}[H]
\begin{center}
\includegraphics[scale = 0.6]{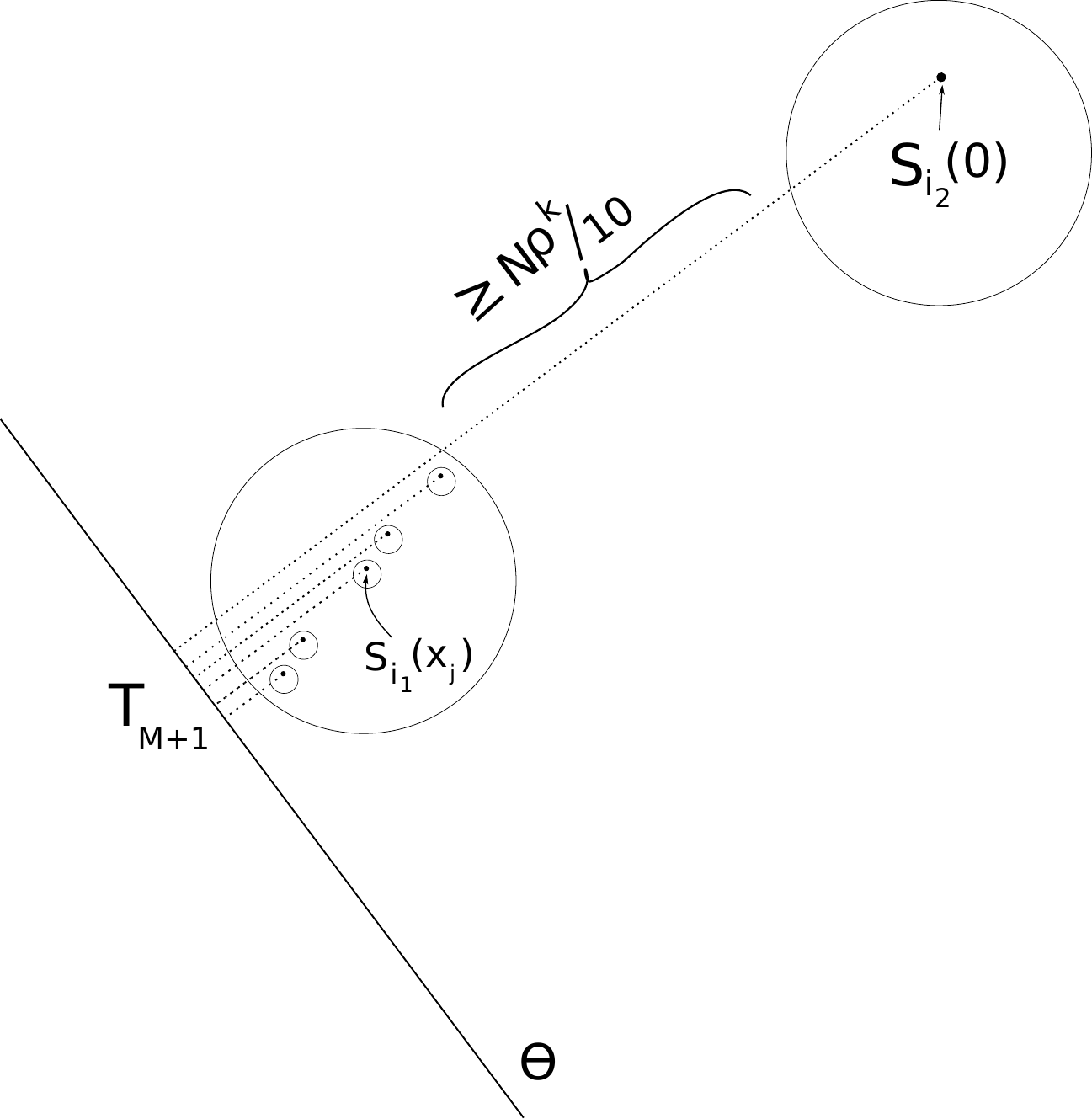}
\caption{Finding $M + 1$ correctly spaced points in the projection $\pi_{\theta} F$.}\label{fig1}
\end{center}
\end{figure}

Clearly $T_{M + 1} \subset \pi_{\theta}(F)$, so it suffices to verify that, for a suitable choice of $\theta$, the elements in $T_{M + 1}$ can be ordered so that any consecutive points have distance between 
\begin{displaymath} \rho^{k}\tau(2 - \psi(M + 1)) \quad \text{and} \quad \rho^{k}\tau(2 + \psi(M + 1)). \end{displaymath} 

To this end, we first consider the points $\pi_{\theta}(S_{\mathbf{i}_{1}}(x_{j}))$, for $1 \leq j \leq M$. Write $S_{\mathbf{i}_{1}}(x) = \rho^{k}Ox + w_{\mathbf{i}_{1}}$, where $O = O_\alpha^k$ is within $\varepsilon = c\tau$ of the identity by assumption. Then, write $e_{0} := (\cos \theta_{0},\sin \theta_{0})$ and $e_{1} = (\cos \theta_{1},\sin \theta_{1}) := Oe_{0}$, so that $|\theta_{1} - \theta_{0}| \leq c\tau$. With this notation,
\begin{align} \pi_{\theta_{1}}(S_{\mathbf{i}_{1}}(x_{j + 1})) - \pi_{\theta_{1}}(S_{\mathbf{i}_{1}}(x_{j})) & = \rho^{k}\big(O(x_{j + 1}) \cdot e_{1} - O(x_{j}) \cdot e_{1}\big) \notag\\
& = \rho^{k}\big(x_{j + 1} \cdot e_{0} - x_{j} \cdot e_{0}\big)\notag\\
& = \rho^{k}\big(\pi_{\theta_{0}}(x_{j + 1}) - \pi_{\theta_{0}}(x_{j})\big)\notag \\
&\label{form2} \in [\rho^{k}\tau(2 - \psi(M)),\rho^{k}\tau(2 + \psi(M))]. \end{align}
by \eqref{form1}, for all $1 \leq j < M$. In fact, the same holds if we replace $\theta_{1}$ by any angle $\theta \in [0,\pi)$ satisfying $|\theta - \theta_{0}| \leq c\tau$: just observe that $|S_{\mathbf{i}_{1}}(x_{j}) - S_{\mathbf{i}_{1}}(x_{j + 1})| \leq \diam(S_{\mathbf{i}_{1}}(F)) \leq 2\rho^{k}$, and then infer from Lemma \ref{geoLemma} that
\begin{displaymath} |\pi_{\theta_{1}}(S_{\mathbf{i}_{1}}(x_{j}) - S_{\mathbf{i}_{1}}(x_{j + 1})) - \pi_{\theta}(S_{\mathbf{i}_{1}}(x_{j}) - S_{\mathbf{i}_{1}}(x_{j + 1}))| \lesssim \rho^{k}|\theta_{1} - \theta| \leq c\tau \rho^{k}. \end{displaymath}
If $c > 0$ is small enough, depending on the difference $\psi(M + 1) - \psi(M)$, combined with \eqref{form2} this proves that
\begin{equation}\label{form3} \rho^{k}\tau(2 - \psi(M + 1)) \leq \pi_{\theta}(S_{\mathbf{i}_{1}}(x_{j + 1})) - \pi_{\theta}(S_{\mathbf{i}_{1}}(x_{j})) \leq \rho^{k}\tau(2 + \psi(M + 1)) \end{equation}
for $1 \leq j \leq M$, as long as $|\theta - \theta_{0}| \leq c\tau$.

Now we have shown that $T_{M + 1}$ contains at least $M$ elements, which lie in the same order as the points $t_{j}$, and such that the distance between consecutive points is within the correct range. To increase this number to $M + 1$, we need to choose $\theta$ so that that $\pi_{\theta}(S_{\mathbf{i}_{2}}(0))$ can act as the $(M + 1)^{\mathrm{th}}$ element. There is not much choice: either $\pi_{\theta}(S_{\mathbf{i}_{2}}(0))$ should lie at distance $2\tau\rho^{k}$ left from $\pi_{\theta}(S_{\mathbf{i}_{1}}(x_{1}))$, or at distance $2\tau \rho^{k}$ right from $\pi_{\theta}(S_{\mathbf{i}_{1}}(x_{M}))$. We make the first choice, which means the question becomes: can we guarantee that $|\theta - \theta_{0}| \leq c\tau$ (in order to maintain \eqref{form3})?

To answer this affirmatively, observe that \eqref{form5} implies $|\pi_{\theta_{0}}(S_{\mathbf{i}_{2}}(0)) - \pi_{\theta_{0}}(S_{\mathbf{i}_{1}}(x_{1}))| \leq 5 \rho^{k}$, and furthermore
\begin{displaymath} |S_{\mathbf{i}_{2}}(0) - S_{\mathbf{i}_{1}}(x_{1})| \geq N\rho^{k}/10 \end{displaymath}
by the choice of $\mathbf{i}_{1},\mathbf{i}_{2}$, and \eqref{form4}. Now, let $\theta \in [0,\pi)$ be an angle such that 
\begin{displaymath}  \pi_{\theta}(S_{\mathbf{i}_{1}}(x_{1})) - \pi_{\theta}(S_{\mathbf{i}_{2}}(0)) = 2\tau \rho^{k}. \end{displaymath} 
By Lemma \ref{geoLemma} and the triangle inequality,
\begin{align*} N\rho^{k}|\theta - \theta_{0}| \lesssim |\pi_{\theta_{0}}(S_{\mathbf{i}_{2}}(0) - S_{\mathbf{i}_{1}}(x_{1})) - \pi_{\theta}(S_{\mathbf{i}_{2}}(0) - S_{\mathbf{i}_{1}}(x_{1}))| \leq (2\tau + 5)\rho^{k}.  \end{align*} 
Recalling that $N \geq C/\tau$ for a large constant $C$, we can force $|\theta - \theta_{0}| \leq c\tau$, as required. Thus, the set $T_{M + 1}$ has the desired properties, and the inductive proof of Lemma \ref{indClaim} is complete.
\end{proof}

 The next simple observation says that Lemma \ref{indClaim} self-improves: the existence of one $\theta$ implies a similar statement for every $\theta$.
\begin{cor} \label{improvedclaim}
 Fix $\theta \in [0,\pi)$, $r \in (0,\rho]$ and $M \in \mathbb{N}$. Then, for some $\tau < r$, the projection $\pi_{\theta}(F)$ contains a set $\{t_{1},\ldots,t_{M}\}$ such that $\tau/2 \leq |t_{j} - t_{j + 1}| \leq 4\tau$ for all $1 \leq j \leq M$.
\end{cor}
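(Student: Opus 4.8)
The plan is to transport the single good direction $\theta_{0} = \theta(r,M)$ produced by Lemma \ref{indClaim} to the arbitrary target direction $\theta$, exploiting the fact that all our maps share the irrational rotation $O_\alpha$. Concretely, I would first apply Lemma \ref{indClaim} with the given $r$ and $M$ to obtain $\theta_{0} \in [0,\pi)$, a scale $\tau < r$, and points $t_{1},\ldots,t_{M} \in \pi_{\theta_{0}}(F)$ whose consecutive gaps lie in $[\tau(2 - \psi(M)),\tau(2 + \psi(M))]$. Since $\psi(M) \in [0,1)$, this interval is contained in $(\tau,3\tau)$, which is already comfortably inside the target range $[\tau/2,4\tau]$ with room to absorb a perturbation. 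Choosing preimages $x_{1},\ldots,x_{M} \in F$ with $\pi_{\theta_{0}}(x_{j}) = t_{j}$, the entire configuration now lives inside $F$.

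Next I would use the identity $\pi_{\theta}(S_{1}^{k}(x)) = \rho^{k}\pi_{\theta - k\alpha}(x)$, valid after the normalisation $w_{1} = 0$ (so $S_{1}(x) = \rho O_\alpha x$), which follows from $(O_\alpha^{k}x)\cdot e_{\theta} = x \cdot e_{\theta - k\alpha}$ for the unit vector $e_{\theta} = (\cos\theta,\sin\theta)$. Because $\alpha/\pi$ is irrational, the orbit $\{k\alpha \bmod \pi : k \in \mathbb{N}\}$ is dense in $[0,\pi)$, so for any prescribed accuracy $\varepsilon' > 0$ I can choose $k \in \mathbb{N}$ with $k\alpha \equiv \theta - \theta_{0} \pmod{\pi}$ up to error $\varepsilon'$; equivalently, $\theta - k\alpha$ is $\varepsilon'$-close to $\theta_{0}$ modulo $\pi$. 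Applying $S_{1}^{k}$ to the $x_{j}$ and projecting in direction $\theta$ then yields the candidate set $\{\pi_{\theta}(S_{1}^{k}(x_{j}))\}_{j=1}^{M} \subset \pi_{\theta}(F)$, whose consecutive differences equal $\rho^{k}\big(\pi_{\theta - k\alpha}(x_{j + 1}) - \pi_{\theta - k\alpha}(x_{j})\big)$.

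It remains to check the spacing. Writing $\tau' := \rho^{k}\tau$, which automatically satisfies $\tau' < \tau < r$ since $\rho \in (0,1)$, I would compare $\pi_{\theta - k\alpha}$ with $\pi_{\theta_{0}}$ by means of Lemma \ref{geoLemma}: as $|x_{j + 1} - x_{j}| \leq \diam(F) \lesssim 1$ and the two angles differ by at most $\varepsilon'$ modulo $\pi$, each gap $\pi_{\theta - k\alpha}(x_{j + 1}) - \pi_{\theta - k\alpha}(x_{j})$ agrees with $\pm(t_{j + 1} - t_{j})$ up to an additive error $\lesssim \varepsilon'$. The sign is uniform in $j$ (it records whether $\theta - k\alpha$ is close to $\theta_{0}$ or to $\theta_{0} + \pi$), so after possibly reversing the labelling the sequence $j \mapsto \pi_{\theta}(S_{1}^{k}(x_{j}))$ is monotone and its consecutive distances are precisely these gaps scaled by $\rho^{k}$. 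Fixing $\varepsilon'$ small enough that the error stays below $\tau/2$, the rescaled distances land in $[\rho^{k}\tau/2,\, \rho^{k}\cdot \tfrac{7}{2}\tau] \subset [\tau'/2,4\tau']$, as required.

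The only points needing genuine care are the modulo-$\pi$ bookkeeping -- a shift of the direction by $\pi$ merely negates $\pi_{\theta}$ and so preserves all distances -- and the order in which the parameters are quantified: $\tau$ is frozen the moment Lemma \ref{indClaim} is invoked, and only afterwards does one choose the rotation accuracy $\varepsilon'$ (and hence $k$) small relative to $\tau$. No dimension estimate enters, so this is a soft self-improvement of the inductive lemma, exactly parallel to the way Corollary \ref{erCor} self-improves Lemma \ref{eroglu}.
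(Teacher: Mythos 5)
Your proposal is correct and is essentially the paper's own argument: both transport the configuration from Lemma \ref{indClaim} into a small copy $S_{1}^{k}(F)$, use the density of the orbit $\{k\alpha \bmod \pi\}$ to align the direction with the target $\theta$, and absorb the angular perturbation via Lemma \ref{geoLemma}, choosing the accuracy small relative to the already-fixed $\tau$. The only cosmetic difference is that you conjugate the projection through the identity $\pi_{\theta} \circ S_{1}^{k} = \rho^{k}\pi_{\theta - k\alpha}$ and compare angles at unit scale, whereas the paper tracks the rotated direction $\theta_{L}$ and compares at scale $\rho^{L}$ -- the same computation in different coordinates.
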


\begin{proof}
According to Lemma \ref{indClaim}, for a suitable angle $\theta_{0} \in [0,\pi)$, the projection $\pi_{\theta_{0}}(F)$ contains a subset $\{t^{0}_{1},\ldots,t^{0}_{M}\}$ such that $\tau \leq |t_{j}^{0} - t^{0}_{j + 1}| \leq 3\tau$ for $1 \leq j \leq M$. Write $\mathbf{j}_{L} = (1,\ldots,1) \in \{1,2\}^{L}$. Then, for a certain $\theta_{L} \in [0,\pi)$ depending on the rotation parameter $\alpha$ and $L \in \mathbb{N}$, it follows that the projection $\pi_{\theta_{L}}(S_{\mathbf{j}_{L}}(F))$ contains $M$ points $\{t_{1}^{L},\ldots,t_{M}^{L}\}$ such that $\rho^{L}\tau \leq |t_{j}^{L} - t_{j + 1}^{L}| \leq 3\rho^{L}\tau$. The sequence $(\theta_{L})_{L \in \mathbb{N}}$ is asymptotically dense in $[0,\pi)$, so $|\theta_{L} - \theta| \leq \tau/100$ for some $L \in \mathbb{N}$. Then, writing $t_{j}^{L} = \pi_{\theta_{L}}(S_{\mathbf{j}_{L}}(x_{j}))$ for some $x_{j} \in F$, it follows from Lemma \ref{geoLemma} that
\begin{displaymath} |\pi_{\theta}(S_{\mathbf{j}_{L}}(x_{j}) - S_{\mathbf{j}_{L}}(x_{j + 1})) - \pi_{\theta_{L}}(S_{\mathbf{j}_{L}}(x_{j}) - S_{\mathbf{j}_{L}}(x_{j + 1}))| \leq \diam(S_{\mathbf{j}_{L}}(F))|\theta - \theta_{L}| \leq \rho^{L}\tau/50 \end{displaymath}
for $1 \leq j < M$. This proves the result for the points $t_{j} := \pi_{\theta}(S_{\mathbf{j}_{L}}(x_{j}))$.
\end{proof}

The dense rotations case of Theorem \ref{assouadprojection} now follows immediately. Fix $\theta \in [0,\pi)$ and observe that, by Corollary \ref{improvedclaim}, we can find arbitrarily large $M \in \mathbb{N}$ such that 
\[
N_{\tau/4}\big(B(x,4M\tau) \cap \pi_{\theta}(F)\big) \, \geq \, M \, =  \, \frac{1}{16} \left(\frac{4M\tau}{\tau/4}\right)^{1}
\]
for some $\tau > 0$ and $x \in \pi_\theta F$. This yields $\dim_\text{A} \pi_{\theta} F = 1$, as required.

\section{Proof of Theorem \ref{assouadprojectionexamples}} \label{assouadprojectionexamplesproof}

This theorem is proved using a simple subsystem trick and adapting an example of Bandt and Graf \cite[Section 2 (5)]{BandtGraf}.  First observe that the dense rotations case is taken care of by Theorem \ref{assouadprojection} and so we only need to deal with the discrete rotations case.

\begin{lma}
Let $F\subseteq [0,1]^2$ be a self-similar set which is not contained in a line and suppose that the group generated by $\{O_i\}_{i \in \mathcal{I}}$ is discrete.  Then $F$ contains a self-similar set $E$ which is the attractor of an IFS consisting of three similarities, all of which have the same contraction ratio, no rotational or reflectional component (i.e., trivial orthogonal part) and such that the three fixed points are not collinear.
\end{lma}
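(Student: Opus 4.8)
The plan rests on the standard \emph{subsystem trick}: if $W_{1},W_{2},W_{3}$ are any compositions of the maps $\{S_{i}\}_{i \in \mathcal{I}}$, then each $W_{j}(F) \subseteq F$, so that $F \supseteq \bigcup_{j} W_{j}(F)$. By forward invariance (iterating the Hutchinson operator of $\{W_{1},W_{2},W_{3}\}$ on the invariant set $F$), the attractor $E$ of $\{W_{1},W_{2},W_{3}\}$ then satisfies $E \subseteq F$. Thus it suffices to produce three words with equal contraction ratio, trivial orthogonal part, and non-collinear fixed points; the containment $E \subseteq F$ is automatic, and no separation is needed.

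First I would arrange three homotheties with non-collinear centres. Since the group $G$ generated by $\{O_{i}\}_{i \in \mathcal{I}}$ is discrete it is finite, and the fixed points of the maps $S_{\mathbf{w}}$, $\mathbf{w} \in \mathcal{I}^{*}$, are dense in $F$ (given $x \in F$ with a coding, a long prefix $\mathbf{w}$ gives $x \in S_{\mathbf{w}}(F)$, which contains the fixed point of the contraction $S_{\mathbf{w}}$). As $F$ is not contained in a line it has three non-collinear points, and since non-collinearity is an open condition I can approximate these by fixed points $q_{1},q_{2},q_{3}$ of words $\mathbf{v}_{1},\mathbf{v}_{2},\mathbf{v}_{3}$ that remain non-collinear. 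Replacing each $\mathbf{v}_{j}$ by a suitable power $\mathbf{v}_{j}^{k_{j}}$, with $k_{j}$ a \emph{large} multiple of the order of $O_{\mathbf{v}_{j}}$ in $G$, yields homotheties $h_{j} := S_{\mathbf{v}_{j}}^{k_{j}}$ with trivial orthogonal part (since $O_{\mathbf{v}_{j}}^{k_{j}} = I$), the same fixed point $q_{j}$, and contraction ratio $b_{j} := c_{\mathbf{v}_{j}}^{k_{j}}$ as small as desired, say $b_{j} < \eta$.

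The essential point is equalising the ratios, and the trick is that the contraction ratio of a composition is the \emph{product} of the individual ratios, independent of the order of composition. Hence the three cyclic products
\begin{displaymath} W_{1} := h_{1} \circ h_{2} \circ h_{3}, \qquad W_{2} := h_{2} \circ h_{3} \circ h_{1}, \qquad W_{3} := h_{3} \circ h_{1} \circ h_{2} \end{displaymath}
all have the common ratio $\lambda := b_{1}b_{2}b_{3}$ and trivial orthogonal part. To finish, I would verify that their fixed points $p_{1},p_{2},p_{3}$ are non-collinear. Writing $h_{j}(x) = q_{j} + b_{j}(x - q_{j})$ and using $h_{j}([0,1]^{2}) \subseteq [0,1]^{2}$, the outermost map of each $W_{j}$ forces its fixed point into the $b$-ball about the corresponding centre, so that $|p_{1} - q_{1}|, |p_{2} - q_{2}|, |p_{3} - q_{3}| \leq \eta\sqrt{2}$. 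Choosing $\eta$ small compared with the (positive) area of the triangle $q_{1}q_{2}q_{3}$ keeps $p_{1},p_{2},p_{3}$ non-collinear, and $\{W_{1},W_{2},W_{3}\}$ is the required IFS.

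The only genuine obstacle is the equalisation of contraction ratios, which the cyclic-product observation dispatches; the remaining fixed-point localisation is routine. This is essentially the mechanism behind the Bandt--Graf example referenced above.
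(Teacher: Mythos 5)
Your proof is correct and takes essentially the same route as the paper's: approximate three non-collinear points of $F$ by fixed points of words, raise each word to a multiple of the (finite) order of its orthogonal part to obtain homotheties, compose all three blocks in different orders to equalise the contraction ratios, and shrink the pieces so the fixed points of the composites stay near the non-collinear centres. The only cosmetic differences are your cyclic orderings $h_1h_2h_3,\ h_2h_3h_1,\ h_3h_1h_2$ in place of the paper's $123,\ 213,\ 312$ (both work, since each composite has a distinct outermost block localising its fixed point) and your explicit quantitative $\eta$-perturbation argument in place of the paper's ``choose $m$ sufficiently large so that no triple across the three pieces is collinear''.
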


\begin{proof}
Again, this lemma is almost trivial since we are not concerned with losing any dimension.  Since $F$ is not contained in a line, we may choose finite words $\textbf{i}_1, \textbf{i}_2, \textbf{i}_3 \in \mathcal{I}^*$ such that the fixed points of $S_{\textbf{i}_1}, S_{\textbf{i}_2}, S_{\textbf{i}_3}$ are not collinear.  Each of these maps has an orthogonal component with finite order and so taking $k$ to be the lowest common multiple of these orders, the maps $S_{\textbf{i}_1}^k, S_{\textbf{i}_2}^k, S_{\textbf{i}_3}^k$ all have trivial orthogonal component. Since the fixed points of these three maps are not collinear, we may choose $m$ sufficiently large to guarantee that there are no triples $(x,y,z) \in S_{\textbf{i}_1}^{mk}(F) \times  S_{\textbf{i}_2}^{mk}(F) \times  S_{\textbf{i}_3}^{mk}(F)$ such that $x,y,z$ are collinear.  Since similarity ratios are multiplicative, the IFS consisting of the three maps
\begin{align*}
& S_{\textbf{i}_1}^{mk} \circ S_{\textbf{i}_2}^{mk} \circ S_{\textbf{i}_3}^{mk} \\
&S_{\textbf{i}_2}^{mk} \circ S_{\textbf{i}_1}^{mk} \circ S_{\textbf{i}_3}^{mk} \\
& S_{\textbf{i}_3}^{mk} \circ S_{\textbf{i}_1}^{mk} \circ S_{\textbf{i}_2}^{mk} 
\end{align*}
all have the same contraction ratios, trivial orthogonal components, and their fixed points are not collinear.  This completes the proof.
\end{proof}

Since Assouad dimension is monotone, we only need to prove the result for self-similar sets of the same form as $E$ from the above lemma.  Let $c \in (0,1)$ be the common similarity ratio and observe that for any $t \geq 0$ we can choose $\theta \in [0,\pi)$ such that $\pi_\theta E$ is an affinely scaled copy of the attractor $E_t$ of the IFS consisting of the maps $S_1, S_2, S_3$ acting on the line defined by
\[
S_1(x) = cx, \qquad S_2(x) = cx+1, \qquad S_3(x) = cx+t.
\]
Thus to complete the proof we need to prove that for some $t \geq 0$, this IFS fails the WSP.  This is a simple adaptation of the example considered by Bandt and Graf in \cite[Section 2 (5)]{BandtGraf}, but we include the details for completeness.  Choose
\[
t = \sum_{k=0}^\infty c^{2^k}
\]
and observe that the maps $S_\textbf{i}^{-1} \circ S_\textbf{j}$ where $\textbf{i}, \textbf{j} \in \{1,2,3\}^n$ are precisely maps of the form
\[
x \to x + \sum_{k=1}^{n} c^{-k} a_k
\]
for any sequence $a_k$ over $\{0, \pm t, \pm 1, \pm (1-t)\}$.  Let $n=2^m$ for some large $m \in \mathbb{N}$ and choose the sequence $\{a_k\}_{k=1}^{n}$ by applying the rule:
\[
 a_k =\left\{\begin{array}{cl}
-1 & \text{if $k = 2^m-2^l$ for some $l=0, \dots, m-1$} \\
t & \text{if $k=n$} \\
0 & \text{otherwise}
\end{array}\right.
\]
This means that for any $m$ we can find maps of the form $S_\textbf{i}^{-1} \circ S_\textbf{j}$ equal to
\[
x \to x + c^{-2^m} t -  \sum_{l=0}^{m-1} c^{2^l-2^m} = x+\sum_{l=m}^{\infty} c^{2^l-2^m}.
\]
Since
\[
0< \sum_{l=m}^{\infty} c^{2^l-2^m} \to 0
\]
as $m \to \infty$, the IFS fails the WSP. We may then apply \cite[Theorem 1.3]{Fraseretal}, or Theorem \ref{GDassouad}, to deduce that $\dim_\text{A} E_t = 1$ and thus there is some $\theta$ for which $\dim_\text{A}\pi_\theta E = 1$.

\section{Proof of Theorem \ref{assouadconstant}} \label{assouadconstantproof}

One of the most effective ways to bound the Assouad dimension of a set from below is to construct \emph{weak tangents}.  This approach was introduced by Mackay and Tyson \cite{mackaytyson}, but the minor adaptation we state and use here was proved in \cite[Proposition 3.7]{Fraseretal}.  First we need a suitable notion of convergence for compact sets, which is given by the Hausdorff metric.  Let $\mathcal{K}(\mathbb{R}^d)$ denote the set of all compact subsets of  $\mathbb{R}^d$, which is a complete metric space when equipped with the Hausdorff metric $d_\mathcal{H}$ defined by
\[
d_\mathcal{H} (A,B) = \max \{ \rho_\mathcal{H} (A,B) , \rho_\mathcal{H} (B,A) \}
\]
where $ \rho_\mathcal{H}$ is defined by
\[
\rho_\mathcal{H} (A,B)  = \sup_{a \in A} \inf_{b \in B} \lvert a-b \rvert,
\]
i.e., the infimal $\delta\geq 0$ such that $A$ is contained in the $\delta$-neighbourhood of $B$.
\begin{prop}{ \em \cite[Proposition 3.7]{Fraseretal}.}\label{pseudoweaktangent}
Let $F, \hat{F} \in \mathcal{K}(\mathbb{R}^d)$ and suppose there exists a sequence of similarity maps $T_k:\mathbb{R}^d \to \mathbb{R}^d $ such that $\rho_\mathcal{H} (\hat{F},T_k(F)) \to 0$ as $k \to \infty$.  Then $\dim_\text{\emph{A}} F \geq \dim_\text{\emph{A}} \hat{F}$. 
\end{prop}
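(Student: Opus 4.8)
The plan is to show directly that every $s < \dim_{\textup{A}} \hat{F}$ also satisfies $s \leq \dim_{\textup{A}} F$; taking the supremum over such $s$ then gives the claim. The mechanism is to select a near-extremal local configuration witnessing that $\hat{F}$ is ``complicated'' at some pair of scales, to transport it into $F$ by one of the approximating similarities $T_k$, and to exploit the fact that a similarity preserves the relevant scale ratio. I would phrase complexity in terms of separated sets rather than covering numbers: in $\mathbb{R}^{d}$ a maximal $r$-separated subset of a set $E$ has cardinality comparable to $N_r(E)$ (up to a dimensional constant arising from passing between scales $r$ and $2r$), and conversely a $2\rho$-separated set of cardinality $m$ forces $N_{\rho}(E) \geq m$. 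A point worth stressing is that only the one-sided quantity $\rho_\mathcal{H}(\hat{F}, T_k F) \to 0$ is used, namely that $\hat{F}$ lies in a shrinking neighbourhood of $T_k F$; I never need $T_k F$ to lie near $\hat{F}$, which is exactly why $\rho_\mathcal{H}$ and not the full metric $d_\mathcal{H}$ suffices.

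For the transport step I would fix $s < \dim_{\textup{A}} \hat{F}$. Since the Assouad bound with exponent $s$ fails for every constant, for each $n \in \mathbb{N}$ I can choose radii $R_n > r_n > 0$ and $\hat{x}_n \in \hat{F}$ with $N_{r_n}(B(\hat{x}_n, R_n) \cap \hat{F}) > n (R_n/r_n)^{s}$, hence an $r_n$-separated family $\{\hat{y}_1, \ldots, \hat{y}_{m_n}\} \subseteq B(\hat{x}_n, R_n) \cap \hat{F}$ with $m_n \gtrsim n (R_n/r_n)^{s}$. With $r_n$ now fixed, I would pick $k = k(n)$ so large that $\rho_\mathcal{H}(\hat{F}, T_{k} F) < r_n/4$, and for each $i$ choose $z_i \in T_k F$ with $|\hat{y}_i - z_i| < r_n/4$. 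The $z_i$ are then $(r_n/2)$-separated and lie in $B(\hat{x}_n, 2R_n)$. Writing $\lambda_k$ for the similarity ratio of $T_k$ and setting $w_i := T_k^{-1}(z_i) \in F$, the points $w_i$ are $(r_n/(2\lambda_k))$-separated and sit in a ball of radius $\sim R_n/\lambda_k$; after recentring at $w_1 \in F$ I denote this ball $B(w_1, R_n')$ with $R_n' \sim R_n/\lambda_k$.

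The conceptual heart is that $\lambda_k$ cancels in the scale ratio. Setting $r_n' := r_n/(4\lambda_k)$ I get $R_n'/r_n' \sim R_n/r_n$, while the cardinality $m_n$ is untouched. Since the $w_i$ are $2r_n'$-separated, they force $N_{r_n'}(B(w_1, R_n') \cap F) \geq m_n \gtrsim n (R_n/r_n)^{s} \sim n (R_n'/r_n')^{s}$. Letting $n \to \infty$, the diverging prefactor $n$ beats any fixed constant, so the Assouad bound with exponent $s$ fails for $F$ and $\dim_{\textup{A}} F \geq s$; letting $s \nearrow \dim_{\textup{A}} \hat{F}$ finishes the argument.

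I expect no genuine obstacle here: the one line ``$\lambda_k$ cancels'' encodes the scale-invariance of Assouad dimension under similarities, and everything else is bookkeeping. The places needing care are the conversion between covering numbers and separated sets (and checking that the dimensional constants there are harmless, since they are dwarfed by the factor $n$), making sure the ball used inside $F$ is centred at a genuine point of $F$ (handled by recentring at $w_1$), and verifying that $k(n)$ is selected only \emph{after} $r_n$ is fixed so that no circularity enters. That all of this is routine is presumably why the result is recorded merely as a minor adaptation of the Mackay--Tyson weak tangent principle.
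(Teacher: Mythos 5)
Your proof is correct: fixing $s < \dim_{\textup{A}} \hat{F}$, extracting an $r_n$-separated configuration of size $> n(R_n/r_n)^s$ in $\hat F$, transporting it into $T_kF$ via the one-sided bound $\rho_\mathcal{H}(\hat F, T_kF) < r_n/4$ (chosen after $r_n$ is fixed), and pulling back by $T_k^{-1}$ so that the similarity ratio cancels in $R/r$ is exactly the right mechanism, and the bounded losses (the dimensional constant in the covering-number/separated-set conversion, the factor $16^{s}$ from recentring at $w_1 \in F$) are indeed dwarfed by the diverging prefactor $n$. The paper itself offers no proof of this proposition -- it is quoted from \cite[Proposition 3.7]{Fraseretal} -- and your argument is essentially the same adaptation of the Mackay--Tyson weak tangent principle given there.
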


 The set $\hat{F}$ in Proposition \ref{pseudoweaktangent} is called a \emph{weak pseudo-tangent} to $F$.  Note that the insertion of the word `pseudo' in this definition refers to the fact we use $\rho_\mathcal{H}$ instead of the Hausdorff metric used by Mackay and Tyson.  The advantage of this approach is that one only needs a subset of $T_k(F)$ to get close to $\hat{F}$.  This is useful when dealing with overlaps, as we will need to do here.

\begin{lma} \label{tangentsall}
Fix $k,d \in \mathbb{N}$ with $0<k<d$.  Then for all $\pi_1, \pi_2 \in G_{d,k}$, the set $\pi_2 F$ is a weak pseudo-tangent to $\pi_1 F$.
\end{lma}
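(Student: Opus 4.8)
The plan is to exhibit $\pi_{2} F$ as a weak pseudo-tangent to $\pi_{1} F$ and then invoke Proposition \ref{pseudoweaktangent}; running the same construction with the roles of $\pi_{1}$ and $\pi_{2}$ reversed and combining the two inequalities then yields Theorem \ref{assouadconstant}. Write $V_{1}, V_{2} \subseteq \mathbb{R}^{d}$ for the $k$-planes onto which $\pi_{1},\pi_{2}$ project. The geometric heart of the argument is the choice of a single orthogonal map $g$ that intertwines the two projections. First I would pick $g \in O(d)$ mapping $V_{2}$ isometrically onto $V_{1}$ and $V_{2}^{\perp}$ onto $V_{1}^{\perp}$; decomposing an arbitrary $x \in \mathbb{R}^{d}$ along $V_{2} \oplus V_{2}^{\perp}$ shows that $\pi_{1} \circ g = g \circ \pi_{2}$, equivalently $g^{-1}\pi_{1} g = \pi_{2}$. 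When the relevant group is $SO(d)$ rather than $O(d)$, one still has this freedom: since $1 \leq k$ and $1 \leq d - k$, one may flip orientation inside either block and so arrange $\det g = 1$.

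Next I would use density to replace $g$ by rotational parts coming from the IFS. Because the group generated by $\{O_{i}\}_{i \in \mathcal{I}}$ is dense in $O(d)$ (resp. $SO(d)$), and because a closed subsemigroup of a compact group is automatically a subgroup, the sub\emph{semigroup} generated by $\{O_{i}\}_{i \in \mathcal{I}}$ is dense as well. Hence for each $n \in \mathbb{N}$ there is a word $\mathbf{i}_{n} \in \mathcal{I}^{*}$ with $\|O_{\mathbf{i}_{n}} - g\| \leq 1/n$. The associated map is $S_{\mathbf{i}_{n}}(x) = c_{\mathbf{i}_{n}} O_{\mathbf{i}_{n}} x + t_{\mathbf{i}_{n}}$, and the point of using a positive word is that $S_{\mathbf{i}_{n}}(F) \subseteq F$, so that $\pi_{1}(S_{\mathbf{i}_{n}}(F)) \subseteq \pi_{1} F$.

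Finally I would define the similarity $T_{n}(y) := c_{\mathbf{i}_{n}}^{-1} g^{-1}(y - \pi_{1} t_{\mathbf{i}_{n}})$ of $\mathbb{R}^{d}$ and estimate the error. For $x \in F \subseteq [0,1]^{d}$ one computes $T_{n}(\pi_{1} S_{\mathbf{i}_{n}}(x)) = g^{-1}\pi_{1} O_{\mathbf{i}_{n}} x$, so that, using $g^{-1}\pi_{1} g = \pi_{2}$ and $|x| \leq \sqrt{d}$,
\[
|T_{n}(\pi_{1} S_{\mathbf{i}_{n}}(x)) - \pi_{2} x| = |g^{-1}\pi_{1}(O_{\mathbf{i}_{n}} - g)x| \leq \|O_{\mathbf{i}_{n}} - g\|\sqrt{d} \leq \sqrt{d}/n.
\]
Since $\pi_{1} S_{\mathbf{i}_{n}}(x) \in \pi_{1} F$, every point $\pi_{2} x$ of $\pi_{2} F$ lies within $\sqrt{d}/n$ of $T_{n}(\pi_{1} F)$, giving $\rho_{\mathcal{H}}(\pi_{2} F, T_{n}(\pi_{1} F)) \leq \sqrt{d}/n \to 0$. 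By Proposition \ref{pseudoweaktangent} this exhibits $\pi_{2} F$ as a weak pseudo-tangent to $\pi_{1} F$, as claimed.

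The main obstacle is the bookkeeping around the orthogonal component: verifying that a \emph{single} map $g$ simultaneously intertwines the two projections and can be taken orientation-preserving, and justifying that positive words $O_{\mathbf{i}_{n}}$ (not arbitrary elements of the generated group) approximate it — this is exactly where the compact-semigroup remark is needed. Everything else is a soft limiting argument, since the scaling factor $c_{\mathbf{i}_{n}}$ is absorbed by $T_{n}$ and no dimension is lost.
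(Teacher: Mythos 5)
Your proof is correct and takes essentially the same route as the paper: use density of the rotation parts to find a word $\mathbf{i}$ whose orthogonal component aligns $\pi_{1}$ with $\pi_{2}$, then blow the cylinder $\pi_{1}S_{\mathbf{i}}(F) \subseteq \pi_{1}F$ back up by an explicit similarity and apply Proposition \ref{pseudoweaktangent}; your intertwiner $g$ with $g^{-1}\pi_{1}g = \pi_{2}$ and the quantitative $\sqrt{d}/n$ estimate merely replace the paper's soft continuity of $\pi \mapsto \pi F$ and its rescaling map centred at the projected fixed point $\pi_{1}(y_{\mathbf{i}})$. A nice touch is that you explicitly justify approximating $g$ by \emph{positive} words via the fact that a closed subsemigroup of a compact group is a group --- a point the paper's proof uses implicitly.
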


\begin{proof}
Fix  $\pi_1, \pi_2 \in G_{d,k}$ and let $\varepsilon>0$.   Also, let $d_G$ denote the natural metric on $G_{d,k}$ induced by an appropriate operator norm.  Observe that the map from $(G_{d,k}, d_G)$ to $(\mathcal{K}(\mathbb{R}^k), d_{\mathcal{H}})$ defined by $\pi \mapsto \pi F$ is continuous and so we may choose $\delta>0$ such that for $\pi \in G_{d,k}$
\begin{equation} \label{222}
d_G( \pi_2, \pi ) \leq \delta \ \Rightarrow \ d_{\mathcal{H}}( \pi_2F, \pi F) \leq \varepsilon.
\end{equation}
Since the group generated by $\{O_i\}_{i \in \mathcal{I}}$ is dense in $SO(d)$ or $O(d)$, we may choose a finite word $\textbf{i} \in \mathcal{I}^*$ such that $O_\textbf{i} \in SO(d)$  and
\begin{equation} \label{111}
d_G(  \pi_2, \pi_1 O_\textbf{i}) \leq \delta.
\end{equation}
Here we consider the right action of $SO(d)$ on $G_{d,k}$ defined by $(\pi O) (x) = \pi( O(x))$.  Consider the set $S_\textbf{i}(F) \subset F$ and the projection $\pi_1 S_\textbf{i}(F) \subset \pi_1 F$.  Let $y_\textbf{i} \in F$ denote the unique fixed point of $S_\textbf{i}$ and let $T:\pi_1 F \to \mathbb{R}$ be defined by
\[
T(x) = c_\textbf{i}^{-1} x + \pi_1(y_\textbf{i})(1-c_\textbf{i}^{-1})
\]
which blows up by the reciprocal of the contraction ratio of $S_\textbf{i}$  around the point $\pi_1(y_\textbf{i})$.  The similarity $T$ was defined in this way to ensure that
\[
T\big( \pi_1 S_\textbf{i}(F)\big) = (\pi_1 O_\textbf{i}) F
\]
which, by (\ref{111}) and (\ref{222}), yields
\[
d_{\mathcal{H}}\big(\pi_2 F ,T( \pi_1 S_\textbf{i}(F) )\big) \leq \varepsilon.
\]
This in turn implies that
\[
\rho_\mathcal{H}\big(\pi_2 F ,T( \pi_1 F) \big) \leq \varepsilon
\]
which means that by taking a sequence of $\varepsilon$'s tending to zero and choosing $T$ in this way, we obtain a sequence of similarity maps $T_k$, such that
\[
\rho_\mathcal{H}\big(\pi_2 F ,T_k( \pi_1 F) \big) \to 0
 \] 
 as $k \to \infty$.  This proves that $\pi_2F$ is a weak pseudo-tangent to $\pi_1F$ and completes the proof.
\end{proof}

The fact that the Assouad dimension of $\pi  F$ takes the same value for all $\pi \in G_{d,k}$ now follows immediately from the Lemma \ref{tangentsall} and Proposition \ref{pseudoweaktangent}.

\vspace{6mm}

\begin{centering}

\textbf{Acknowledgements}

The first named author is grateful to \'Abel Farkas, Alexander Henderson, Eric Olson and James Robinson for many fruitful and interesting discussions on topics related to this work, in particular during the writing of \cite{FarkasFraser, Fraseretal}. He also thanks Sascha Troscheit for earlier discussions about the possibility of a `Falconer Theorem for Assouad dimension'.  The first named author is  supported by a \emph{Leverhulme Trust Research Fellowship} and the second named author is supported by the Academy of Finland through the grant \emph{Restricted families of projections and connections to Kakeya type problems, grant number 274512}.   Finally, we wish to thank an anonymous referee for a very careful reading of the manuscript, and numerous helpful comments.
\end{centering}

\begin{multicols}{2}{

\noindent \emph{Jonathan M. Fraser\\
School of Mathematics and Statistics\\
The University of St Andrews\\
St Andrews, KY16 9SS, Scotland} \\

\noindent  Email: jmf32@st-andrews.ac.uk\\ \\

\noindent \emph{Tuomas Orponen\\
Department of Mathematics and Statistics\\
The University of Helsinki\\
Helsinki, Finland} \\

\noindent  Email: tuomas.orponen@helsinki.fi \\ \\
}

\end{multicols}

\end{document}